\theoremstyle{plain}
\newtheorem{thm}{Theorem}[section]
\newtheorem*{theo}{Theorem}
\newtheorem*{thma}{Theorem A}
\newtheorem*{thmb}{Theorem B}
\newtheorem*{thmc}{Theorem C}
\newtheorem{lem}[thm]{Lemma}
\newtheorem{prop}[thm]{Proposition}
\newtheorem{cor}[thm]{Corollary}
\newtheorem*{implication}{Implication}
\theoremstyle{definition}
\newtheorem{defn}[thm]{Definition}
\newtheorem*{defn*}{Definition}
\newtheorem{rem}[thm]{Remark}
\newtheorem*{notation}{Notation}
\numberwithin{equation}{section}
\newcommand{\N}{\mathbb{N}}
\newcommand{\R}{\mathbb{R}}
\newcommand{\C}{\mathbb{C}}
\newcommand{\Z}{\mathbb{Z}}
\newcommand{\conv}{\operatorname{conv}}
\newcommand{\spa}{\operatorname{span}}
\newcommand{\ootimes}{\, \overline{\otimes} \,}
\newcommand{\id}{\operatorname{id}}
\newcommand{\PSL}{\operatorname{PSL}}
\begin{document}

\title[W$^*$-superrigidity of Gaussian actions]{W$^*$-superrigidity of mixing Gaussian actions of rigid groups}

\begin{abstract}
We generalize W$^*$-superrigidity results about Bernoulli actions of rigid groups to general mixing Gaussian actions. We thus obtain the following: If $\Gamma$ is any ICC group which is w-rigid (i.e.\ it contains an infinite normal subgroup with the relative property (T)) then any mixing Gaussian action $\Gamma \curvearrowright X$ is W$^*$-superrigid. More precisely, if $\Lambda \curvearrowright Y$ is another free ergodic action such that the crossed-product von Neumann algebras are isomorphic $L^\infty(X) \rtimes \Gamma \simeq L^\infty(Y) \rtimes \Lambda$, then the actions are conjugate. We prove a similar statement whenever $\Gamma$ is a non-amenable ICC product of two infinite groups. 
\end{abstract}

\author{R\'emi Boutonnet}

\address{ENS Lyon \\
UMPA UMR 5669 \\
69364 Lyon cedex 7 \\
France}

\email{remi.boutonnet@ens-lyon.fr}


\maketitle

\section{Introduction}

Most known examples of finite von Neumann algebras are constructed from discrete groups or equivalence relations. Thus the question of understanding which data of the initial group or equivalence relation is remembered in the construction of the associated von Neumann algebra is fundamental if one wants to classify finite von Neumann algebras.  
This problem is usually very hard, but a dramatic progress has been made possible in the last decade thanks to Sorin Popa's deformation/rigidity theory (see \cite{Po07b,Ga10,Va10a} for surveys).


The first W$^*$-rigidity result in the framework of group-measure space constructions is Popa's strong rigidity theorem \cite{Po06a,Po06b}. Assume that $\Gamma \curvearrowright X = X_0^\Gamma$ is a Bernoulli action and that $\Lambda \curvearrowright Y$ is a probability measure preserving (pmp) free ergodic action of an ICC w-rigid group (i.e.\ which contains an infinite normal subgroup with the relative property (T), \cite{Po06a}). Popa shows in \cite{Po06b} that if the crossed-product von Neumann algebras of these actions are isomorphic, then the actions are conjugate.
This is the first result that deduces conjugacy of two actions out of an isomorphism of their crossed product von Neumann algebra.

Later on, Ioana managed to prove the following very general W$^*$-superrigidity result about Bernoulli shifts, which is a natural continuation to Popa's strong rigidity result. For more historical information and results on $W^*$-superrigidity, see for instance \cite{Pe09,PV09,HPV10,Io11,IPV11} and the introductions therein.

\begin{theo}[Ioana, \cite{Io11}]
Let $\Gamma$ be an ICC w-rigid group. Then the Bernoulli action $\Gamma \curvearrowright^\sigma [0,1]^\Gamma$ is W$^*$-superrigid:

Let $\rho$ be any free ergodic measure-preserving action of any group $\Lambda$. Denote by $M$ and $N$ the crossed-product von Neumann algebras associated with $\sigma$ and $\rho$ respectively, and assume that $M \simeq N$. Then $\Lambda$ is isomorphic to $\Gamma$, and the actions $\sigma$ and 
$\rho$ are conjugate.
\end{theo}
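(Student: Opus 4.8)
The plan is to deduce W$^*$-superrigidity from two separate statements: orbit equivalence (OE) superrigidity, and uniqueness of the group-measure-space Cartan subalgebra up to unitary conjugacy. Write $M = A \rtimes \Gamma$ with $A = L^\infty(X)$ and identify $M \simeq N = B \rtimes \Lambda$, $B = L^\infty(Y)$, via the given isomorphism; since $\rho$ is free and ergodic, $B$ is a Cartan subalgebra of $M$, just like $A$. If one shows that $A$ and $B$ are conjugate by a unitary of $M$, then after replacing the $\Lambda$-decomposition by its conjugate one may assume the two Cartan subalgebras coincide, so that the orbit equivalence relations of $\sigma$ and $\rho$ agree; i.e.\ the two actions are orbit equivalent. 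Now Popa's cocycle superrigidity theorem applies: Bernoulli actions of ICC w-rigid groups are $\mathcal U_{\mathrm{fin}}$-cocycle superrigid, and this promotes the orbit equivalence to a genuine conjugacy, simultaneously yielding $\Gamma \simeq \Lambda$ (using freeness, ergodicity and the ICC property). Thus the entire content is the \emph{uniqueness of the Cartan subalgebra}.

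To prove that $A$ and $B$ are unitarily conjugate, I would use Ioana's comultiplication. Let $(v_s)_{s \in \Lambda}$ be the canonical unitaries implementing $\rho$ and define $\Delta \colon M \to M \ootimes M$ by $\Delta(b) = b \otimes 1$ for $b \in B$ and $\Delta(v_s) = v_s \otimes v_s$; this is a normal, trace-preserving $*$-embedding. The key is that $\Delta$ is built from the \emph{unknown} $\Lambda$-data, while $M \ootimes M$ carries the malleable deformation $(\alpha_t)$ coming from the \emph{known} Bernoulli structure of $\sigma$, acting on the second tensor leg (inside an enveloping algebra $M \ootimes \widetilde M$). Because $\sigma$ is mixing, this deformation is weakly mixing relative to its fixed-point algebra $L\Gamma$, and it moves $\Delta(L\Gamma)$ nontrivially — precisely because the $\Gamma$-unitaries are not diagonalised by $\Delta$. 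Since $\Gamma$ is w-rigid, pick an infinite normal subgroup $\Gamma_0 \trianglelefteq \Gamma$ with the relative property (T); then $\Delta(L\Gamma_0)$ is a rigid subalgebra, so $\id \ootimes \alpha_t$ converges uniformly on its unit ball. Popa's transversality argument, together with the weak mixing of the deformation, then forces an intertwining $\Delta(L\Gamma_0) \prec_{M \ootimes M} M \ootimes L\Gamma$ into the fixed-point algebra; using that $\Gamma_0$ is normal, a quasi-normalizer argument upgrades this to $\Delta(L\Gamma) \prec_{M \ootimes M} M \ootimes L\Gamma$.

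It remains to transfer this relation, which lives in $M \ootimes M$, back into $M$. Here I would invoke the structural (comultiplication) lemmas of Ioana–Popa–Vaes, exploiting the flip-symmetry of $\Delta$ and the fact that the relative commutant of $\Delta(L\Gamma)$ reflects that of $L\Gamma$ inside $M$. Since $\Gamma$ is ICC and $\sigma$ is mixing, the degenerate alternatives — in which the image of $\Lambda$ "spreads out" and cannot be localised — are excluded, and one extracts a genuine intertwining $B \prec_M A$ of the two Cartan subalgebras. A standard result of Popa on Cartan subalgebras then upgrades $B \prec_M A$ to unitary conjugacy $u B u^* = A$ for some $u \in \mathcal U(M)$, completing the uniqueness step and hence, by the first paragraph, the proof.

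The main obstacle is the very first move of the rigidity argument: a priori the Cartan subalgebra $B$ and the group $\Lambda$ have \emph{no} compatibility with the Bernoulli structure of $M = A \rtimes \Gamma$, so Popa's deformation/rigidity machinery cannot be applied to them directly. The comultiplication $\Delta$ is exactly the device that relocates the unknown $\Lambda$-data into $M \ootimes M$, where it becomes visible to the $\Gamma$-deformation; making the deformation-versus-rigidity dichotomy run there — controlling the weak mixing of the doubled deformation and, above all, transferring the resulting intertwining faithfully back through $\Delta$ — is the technical heart of the argument. In the generalisation from Bernoulli to arbitrary mixing Gaussian actions, the extra difficulty is that the malleable deformation and its mixing properties must be produced directly from the orthogonal representation underlying the action, and it is precisely mixing, rather than the stronger product structure, that has to be leveraged at each appeal to transversality and weak mixing.
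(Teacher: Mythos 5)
Your overall architecture matches the proof this paper outlines for Ioana's theorem: reduce W$^*$-superrigidity to unitary conjugacy of the two Cartan subalgebras $A$ and $B$ (Feldman--Moore plus Popa's cocycle/orbit-equivalence superrigidity for Bernoulli actions of w-rigid groups), then attack that conjugacy with the comultiplication $\Delta(bv_s)=bv_s\otimes v_s$ and the malleable deformation of the Bernoulli structure on $M\ootimes M$. The rigidity step you describe is essentially the paper's Step (1), though the conclusion actually needed there is a unitary conjugation of $\Delta(L\Gamma)$ into $L\Gamma\ootimes L\Gamma$, obtained from the deformation of $(A\ootimes A)\rtimes(\Gamma\times\Gamma)$ acting on \emph{both} tensor legs; your one-leg deformation only yields an intertwining into $M\ootimes L\Gamma$, which is not sufficient for what follows.

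The genuine gap is the sentence ``one extracts a genuine intertwining $B\prec_M A$'': nothing you have written produces this. Knowing that $\Delta(L\Gamma)$ can be conjugated into $L\Gamma\ootimes L\Gamma$ says nothing yet about $B$. The actual argument is by contradiction, assuming $B\nprec_M A$, and requires three further nontrivial steps. First, one must show that $C=\Delta(A)'\cap(M\ootimes M)$ --- the relative commutant of the image of the \emph{known} Cartan $A$, not of $\Delta(L\Gamma)$ or $\Delta(B)$ --- satisfies $C\prec A\ootimes A$; this is Ioana's localization theorem \cite[Theorem 6.1]{Io11}, applied to the abelian algebra $D=\Delta(A)$ normalized by the unitaries of $\Delta(L\Gamma)\subset L\Gamma\ootimes L\Gamma$, and it is precisely the technical heart of the whole proof (it is what the present paper must generalize, as Theorem \ref{keythm}), not a structural lemma one can quote in passing. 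Second, an enhanced version of Popa's conjugacy criterion converts these two pieces of information into a group-like form $v\Delta(u_g)v^*=\omega(g)u_{\delta(g)}$ with $v\Delta(C)v^*=A\ootimes A$. Third, one shows that $\Delta$ then transfers pointwise decay of Fourier coefficients from $M=A\rtimes\Gamma$ to $M\ootimes M=(M\ootimes A)\rtimes\Gamma$; applied to a sequence of unitaries in $B$ witnessing $B\nprec_M A$, this contradicts $\Delta(b)=b\otimes1$. Without these steps your proof does not close, and your appeal to ``flip-symmetry'' and to the relative commutant of $\Delta(L\Gamma)$ points at the wrong objects.
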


The rigidity of the von Neumann algebra in the theorem above comes from a tension between the property (T) of the group $\Gamma$ and the deformability of Bernoulli actions. This tension is exploited via Popa's deformation/rigidity strategy.
Using a similar strategy of proof, Ioana, Popa and Vaes later proved the W$^*$-superrigidity of Bernoulli actions for other groups, relying this time on the spectral gap type rigidity discovered by Popa in \cite{Po08}.

\begin{theo}[Ioana-Popa-Vaes, \cite{IPV11}]
Let $\Gamma$ be a non-amenable ICC group which is the product of two infinite groups $\Gamma = \Gamma_1 \times \Gamma_2$. Then the Bernoulli action $\Gamma \curvearrowright^\sigma [0,1]^\Gamma$ of $\Gamma$ is W$^*$-superrigid.
\end{theo}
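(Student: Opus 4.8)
The plan is to follow the deformation/rigidity architecture of Ioana's proof, with Popa's relative property (T) input replaced throughout by the spectral gap coming from the product decomposition $\Gamma=\Gamma_1\times\Gamma_2$. Write $A=L^\infty([0,1]^\Gamma)$, $M=A\rtimes_\sigma\Gamma$ with canonical unitaries $(u_g)_{g\in\Gamma}$, and suppose we are given a second group-measure-space decomposition $M\simeq B\rtimes\Lambda$ of the same factor, with $B=L^\infty(Y)$ its Cartan subalgebra and $(v_s)_{s\in\Lambda}$ its canonical unitaries. The goal splits into two essentially independent tasks: (i) show that the two Cartan subalgebras $A$ and $B$ are unitarily conjugate inside $M$, and (ii) upgrade the resulting orbit equivalence to a conjugacy and recover $\Lambda\simeq\Gamma$.

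For task (i) I would first install the tools on the $\Gamma$-side. The Bernoulli action carries Popa's $s$-malleable deformation: $M$ sits inside $\tilde M=L^\infty([0,1]^{\Gamma\times\{0,1\}})\rtimes\Gamma$ equipped with a one-parameter automorphism group $(\alpha_t)$ converging pointwise to $\id$ together with a period-two flip normalizing it, and Popa's transversality inequality links the speed of $\alpha_t$ to intertwining. The input specific to the hypothesis is spectral gap: since $\Gamma$ is non-amenable, one of the factors, say $\Gamma_1$, is non-amenable, so the Koopman representation of $\Gamma_1$ on $L^2([0,1]^\Gamma)\ominus\C$ has spectral gap; by Popa's spectral gap rigidity the deformation satisfies $\alpha_t\to\id$ uniformly on the unit ball of any subalgebra whose relative commutant contains such a non-amenable commuting copy of $L\Gamma_1$. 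This uniform convergence is exactly what relative property (T) supplies in the w-rigid case.

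The heart of the matter — and the step I expect to be the main obstacle — is that we assume nothing whatsoever about $\Lambda$ or $B$, so the $\Gamma$-side deformation cannot be applied to them directly. To bridge this I would use the comultiplication $\Delta:M\to M\ootimes M$ determined by $\Delta(b)=b\otimes 1$ for $b\in B$ and $\Delta(v_s)=v_s\otimes v_s$. The target $M\ootimes M=(A\ootimes A)\rtimes(\Gamma\times\Gamma)$ is again a Bernoulli crossed product of the non-amenable product $\Gamma\times\Gamma$, so it carries its own malleable deformation and spectral gap. Since $\Delta$ is a homomorphism, $\Delta(L\Gamma_1)$ is a non-amenable subalgebra commuting with $\Delta(L\Gamma_2)$, which forces uniform convergence of the deformation on $(\Delta(L\Gamma_2))_1$. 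By the transversality inequality this yields the intertwining $\Delta(L\Gamma_2)\preceq_{M\ootimes M}L\Gamma\ootimes L\Gamma$ into the Bernoulli core, and a normality argument propagates it to all of $\Delta(M)$. The delicate part is then to read these relations back through the defining identity $\Delta(v_s)=v_s\otimes v_s$ and a chain of intertwining-by-bimodules manipulations, converting them into the mutual embeddings $A\preceq_M B$ and $B\preceq_M A$; since both are Cartan subalgebras, this produces a unitary $w\in M$ with $wBw^*=A$. Essentially all the analytic difficulty is concentrated here, since one must locate two a priori unrelated group-measure-space structures at once.

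Finally, for task (ii), once $wBw^*=A$ the two actions $\Gamma\curvearrowright[0,1]^\Gamma$ and $\Lambda\curvearrowright Y$ generate isomorphic orbit equivalence relations, i.e.\ they are orbit equivalent. I would then invoke Popa's cocycle superrigidity theorem from \cite{Po08}: the Bernoulli action of a non-amenable ICC product $\Gamma=\Gamma_1\times\Gamma_2$ of infinite groups is $\mathcal U_{\mathrm{fin}}$-cocycle superrigid, hence orbit-equivalence superrigid. Thus the orbit equivalence must come from a conjugacy of the actions, and the accompanying cocycle data simultaneously yields the group isomorphism $\Lambda\simeq\Gamma$. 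This establishes the asserted W$^*$-superrigidity.
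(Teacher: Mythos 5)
Your overall architecture --- reduce to unitary conjugacy of the two Cartan subalgebras, attack this via the comultiplication $\Delta(bv_s)=bv_s\otimes v_s$ landing in $M\ootimes M=(A\ootimes A)\rtimes(\Gamma\times\Gamma)$, use the spectral gap coming from the product decomposition to control $\Delta(L\Gamma)$, and finish with Popa's cocycle/orbit-equivalence superrigidity from \cite{Po08} --- is exactly the skeleton of the Ioana--Popa--Vaes argument as recalled in the introduction of this paper. The first step (a unitary $u$ with $u\Delta(L\Gamma)u^*\subset L\Gamma\ootimes L\Gamma$) and the last step (orbit equivalence plus \cite[Theorem 1.3]{Po08} gives conjugacy and $\Lambda\simeq\Gamma$) are correctly identified.

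The genuine gap is the sentence in which you convert these relations, ``through a chain of intertwining-by-bimodules manipulations,'' into $B\prec_M A$. That conversion is the entire technical content of the theorem and it does not proceed by extracting mutual embeddings of $A$ and $B$ directly from the position of $\Delta(L\Gamma)$. What is actually required (Steps (2)--(4) of the outline in this paper) is an analysis of the abelian algebra $D=\Delta(A)$ --- the image of the \emph{Bernoulli} Cartan, not of $B$; note $\Delta(B)=B\otimes 1$ carries no new information --- which after Step (1) is normalized by many unitaries of $L(\Gamma\times\Gamma)$. Ioana's localization theorem \cite[Theorem 6.1]{Io11} then yields the trichotomy: either $\Delta(A)'\cap(M\ootimes M)\prec A\ootimes A$, or $\Delta(A)\prec M\ootimes L\Gamma$, or $\Delta(A)\prec L\Gamma\ootimes M$, and the last two cases must be excluded using mixing together with the standing assumption $B\nprec_M A$. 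One then needs an enhanced version of Popa's conjugacy criterion to upgrade $\Delta(A)'\cap(M\ootimes M)\prec A\ootimes A$ to an honest conjugacy $v\Delta(C)v^*=A\ootimes A$ with $v\Delta(u_g)v^*=\omega(g)u_{\delta(g)}$, and finally a Fourier-coefficient argument: if $(x_n)\subset M$ has Fourier coefficients over $A\rtimes\Gamma$ tending to $0$ pointwise in $\Vert\cdot\Vert_2$, then so does $(\Delta(x_n))$ over $(M\ootimes A)\rtimes\Gamma$, contradicting $B\nprec_M A$. None of this appears in your proposal, and the localization theorem in particular is a hard combinatorial/mixing argument --- it is precisely the result the present paper must generalize to treat Gaussian actions --- so it cannot be absorbed into an unspecified chain of manipulations. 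A minor additional point: a single embedding $B\prec_M A$ of Cartan subalgebras already gives unitary conjugacy by \cite[Theorem A.1]{Po06c}, so you do not need both directions; but producing that one embedding is exactly what is missing.
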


Both of the proofs of Ioana's theorem and Ioana-Popa-Vaes' theorem seemed to deeply rely on the very particular structure of Bernoulli actions. We will show that this is not the case, and generalize these results to {\it Gaussian actions}.

Let $\Gamma$ be a countable group and $\pi: \Gamma \rightarrow \mathcal{O}(H)$ an orthogonal representation of $\Gamma$ on a real Hilbert space $H$. Recall that there exist (see \cite{PS10} for instance) a standard probability space $(X,\mu)$ and a pmp action of $\Gamma$ on $X$, such that $H \subset L^2(X)$, as representations of $\Gamma$. This action is called the {\it Gaussian action} induced by the orthogonal representation $\pi$.

We generalize Ioana's result as follows.

\begin{thma}
Let $\Gamma$ be an ICC w-rigid group and $\pi: \Gamma \rightarrow \mathcal{O}(H)$ any mixing orthogonal representation of $\Gamma$.
Then the Gaussian action $\sigma_\pi$ associated with $\pi$ is W$^*$-superrigid.
\end{thma}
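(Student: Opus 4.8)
The plan is to follow Popa's deformation/rigidity strategy, splitting the statement into two independent problems and recombining them. First I would recall the general principle that an action is W$^*$-superrigid as soon as (a) it is cocycle (hence orbit equivalence) superrigid, and (b) its group-measure-space crossed product $M = L^\infty(X) \rtimes \Gamma$ has a \emph{unique} Cartan subalgebra up to unitary conjugacy. Indeed, given another decomposition $M \simeq L^\infty(Y) \rtimes \Lambda$ with $\Lambda \curvearrowright Y$ free and ergodic, (b) lets me unitarily conjugate $L^\infty(Y)$ onto $A := L^\infty(X)$, so that the two actions define the same orbit equivalence relation on $(X,\mu)$; then (a) upgrades this orbit equivalence to a conjugacy and forces $\Lambda \simeq \Gamma$.

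For part (a), I would invoke Popa's cocycle superrigidity theorem. The Gaussian action $\sigma_\pi$ is s-malleable (the orthogonal rotations of $H \oplus H$ are turned by the Gaussian functor into a one-parameter automorphism group deforming $\sigma_\pi$ inside a larger action), and because $\pi$ is mixing the action $\sigma_\pi$ is mixing, in particular weakly mixing. Since $\Gamma$ is w-rigid, containing an infinite normal subgroup $\Sigma$ with the relative property (T), Popa's theorem applies and yields that every cocycle of $\sigma_\pi$ into a countable (more generally $\mathcal{U}_{\mathrm{fin}}$) group is cohomologous to a homomorphism. This is exactly cocycle superrigidity, whence OE-superrigidity.

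The heart of the proof, and the main obstacle, is part (b). Here I would adapt Ioana's comultiplication technique. Identifying $M = L^\infty(X) \rtimes \Gamma = L^\infty(Y) \rtimes \Lambda$ with canonical unitaries $(v_s)_{s \in \Lambda}$, I consider the $*$-homomorphism
\[
\Delta : M \to M \ootimes M, \qquad \Delta(b v_s) = b v_s \otimes v_s \quad (b \in L^\infty(Y),\ s \in \Lambda),
\]
which is well defined and normal. The inclusion $L\Sigma \subset M$ inherits the relative property (T) from $\Sigma \trianglelefteq \Gamma$, hence so does $\Delta(L\Sigma) \subset M \ootimes M$. Extending the s-malleable Gaussian deformation to both tensor legs and combining transversality with the relative property (T) of $\Delta(L\Sigma)$, I would show that the deformation converges uniformly on the unit ball of $\Delta(L\Sigma)$, and then use Popa's intertwining-by-bimodules criterion to trap $\Delta(L\Sigma)$ into a half-sided algebra, i.e.\ $\Delta(L\Sigma) \prec M \ootimes L\Gamma$. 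Mixing of $\sigma_\pi$ is then used to propagate this from the normal subgroup $\Sigma$ to all of $L\Gamma$ and to control the relevant relative commutants.

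Finally I would run the combinatorial ``$\Delta$-rigidity'' analysis of Ioana--Popa--Vaes: from the intertwining data, together with the facts that $A$ and $L^\infty(Y)$ are Cartan and that $\Gamma$ is ICC, one extracts a unitary of $M \ootimes M$ witnessing that $L\Lambda \prec_M L\Gamma$ and, through the normalizing structure, that the Cartan subalgebras $L^\infty(Y)$ and $A$ are unitarily conjugate in $M$. Combined with step (a) this yields conjugacy of the actions and $\Lambda \simeq \Gamma$. I expect the genuinely new difficulty to lie in step (b): the Bernoulli arguments exploit the explicit tensor-product structure of $L^\infty(X_0^\Gamma)$ indexed by $\Gamma$, which the Gaussian Cartan $L^\infty(X)$ lacks, so the uniform-convergence and transversality estimates for the Gaussian deformation, and the final extraction of the unitary conjugacy, must be re-established using only s-malleability and the mixing of $\pi$.
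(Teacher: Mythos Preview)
Your overall architecture matches the paper's: reduce via Popa's OE-superrigidity to showing $B \prec_M A$, then use Ioana's comultiplication $\Delta$ and deformation/rigidity to reach a contradiction from $B \nprec_M A$. Step~(1) in the paper is exactly your intertwining of $\Delta(L\Gamma)$ into $L\Gamma \ootimes L\Gamma$ (Corollary~\ref{corIPV}), and Steps~(3)--(4) are the ``combinatorial $\Delta$-rigidity analysis'' you allude to; those are indeed general and carry over to any mixing action.

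However, you have skipped the one step that is genuinely new, and you have mislocated the obstacle. The s-malleable deformation and transversality for Gaussian actions are completely standard (the orthogonal rotations on $H\oplus H$ give them immediately), so ``uniform-convergence and transversality estimates'' are \emph{not} the issue. What is missing is Step~(2): one must show that $C := \Delta(A)' \cap (M \ootimes M)$ satisfies $C \prec_{M \ootimes M} A \ootimes A$. In the Bernoulli case this is Ioana's localization theorem \cite[Theorem~6.1]{Io11}, whose proof relies essentially on the cylinder structure of $L^\infty([0,1]^\Gamma)$ (finite cylinder subalgebras $A_F$ giving a $\C\Gamma$-graded dense subalgebra) and on a ``strong compactness'' property ($\langle \sigma_g(a),b\rangle = 0$ for $g$ outside a finite set). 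Neither survives for a general mixing Gaussian action. The paper's main technical contribution (Theorem~\ref{keythm}) is to rebuild this localization argument replacing cylinders by arbitrary finite-dimensional subspaces of $A$ and replacing exact orthogonality by $\varepsilon$-orthogonality, the control coming from the \emph{$2$-mixing} property automatically enjoyed by mixing Gaussian actions. Without this step your outline has no mechanism to pin $\Delta(A)$ (or its relative commutant) into $A \ootimes A$, and the passage from ``$\Delta(L\Gamma)$ sits in $L\Gamma \ootimes L\Gamma$'' to ``$B$ is unitarily conjugate to $A$'' does not go through.
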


However, in order to apply Popa's spectral gap argument, one has to make an extra assumption on the initial representation $\pi$. Ioana-Popa-Vaes' theorem then becomes a particular case of the following result.

\begin{thmb}
Let $\Gamma$ be a non-amenable ICC group which is the product of two infinite groups, and consider a mixing orthogonal representation $\pi: \Gamma \rightarrow \mathcal{O}(H)$ of $\Gamma$. Assume that some tensor power of $\pi$ is weakly contained in the regular representation.
Then the Gaussian action $\sigma_\pi$ associated with $\pi$ is W$^*$-superrigid.
\end{thmb}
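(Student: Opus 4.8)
The plan is to derive W$^*$-superrigidity from two independent statements about $M:=L^\infty(X)\rtimes\Gamma$, exactly as in the Bernoulli case: (i) the orbit equivalence superrigidity of $\sigma_\pi$, and (ii) the uniqueness, up to unitary conjugacy, of the group-measure-space Cartan subalgebra $A:=L^\infty(X)$ inside $M$. Granting both, suppose $M\simeq L^\infty(Y)\rtimes\Lambda$ and write $B$ for the image of $L^\infty(Y)$; then $B$ is another Cartan subalgebra of $M$, so by (ii) there is a unitary $u\in M$ with $uBu^*=A$. Replacing the second decomposition by its conjugate we may assume $A=B$, and then the two actions $\Gamma\curvearrowright X$ and $\Lambda\curvearrowright Y$ induce the same orbit equivalence relation on $(X,\mu)\cong(Y,\nu)$, i.e.\ they are orbit equivalent. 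By (i) this orbit equivalence comes from a conjugacy, which also forces $\Lambda\simeq\Gamma$; this is the conclusion of Theorem~B.

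For (i) I would apply Popa's cocycle superrigidity theorem in its version for products of infinite groups (as carried out for Gaussian actions by Peterson--Sinclair). The Gaussian action is s-malleable: writing $\tilde H=H\otimes\R^2$ and $\tilde\pi=\pi\otimes 1$, the rotations $1\otimes R_t$ and the reflection $1\otimes\operatorname{diag}(1,-1)$ of the second tensor factor commute with $\tilde\pi$ and hence induce a one-parameter group $(\alpha_t)$ of automorphisms of $\tilde M:=L^\infty(\tilde X)\rtimes\Gamma$ commuting with $\Gamma$, together with a period-two symmetry fixing $M$ pointwise; transversality follows formally. Since $\pi$ is mixing, $\sigma_\pi$ is weakly mixing and remains so in restriction to each factor $\Gamma_i$. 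The extra hypothesis that some $\pi^{\otimes n_0}$ is weakly contained in $\lambda_\Gamma$ provides the spectral gap that the product-group theorem requires (see the next paragraph). Consequently every cocycle for $\sigma_\pi$ with values in a countable group is cohomologous to a homomorphism; applied to the orbit-equivalence cocycle, which takes values in $\Lambda$, this yields the orbit equivalence superrigidity of $\sigma_\pi$.

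The heart of the matter is (ii), and it is here that the tensor-power hypothesis is indispensable. The reason product-group arguments work so smoothly for Bernoulli actions is that the Koopman representation of $\Gamma\curvearrowright[0,1]^\Gamma$ on $L^2\ominus\C$ is a multiple of $\lambda_\Gamma$, so that for non-amenable $\Gamma$ the conjugation representation on the deformation bimodule $L^2(\tilde M)\ominus L^2(M)$ automatically has no almost-invariant vectors. For a Gaussian action the Koopman representation is instead the symmetric Fock space $\bigoplus_{n\ge 1}S^n(H_{\C})$ over the complexification of $\pi$; the assumption $\pi^{\otimes n_0}\prec\lambda_\Gamma$, combined with Fell's absorption principle (so that $\pi^{\otimes n}\prec\lambda_\Gamma$ for all $n\ge n_0$, the finitely many lower powers requiring separate but routine treatment), forces the whole representation carrying the deformation to be weakly contained in $\lambda_\Gamma$. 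Since $\Gamma$ is non-amenable, this gives a genuine spectral gap: the conjugation action of $\Gamma$ on $L^2(\tilde M)\ominus L^2(M)$ admits no almost-invariant vectors. With the gap secured, and using that the product structure $\Gamma=\Gamma_1\times\Gamma_2$ supplies two commuting subalgebras $L(\Gamma_1),L(\Gamma_2)$ whose conjugation inherits it, I would run Popa's spectral-gap-rigidity scheme in the Ioana--Popa--Vaes form: combining the gap with transversality to control the deformation along the normalizer $\mathcal N_M(B)''=M$, one shows that $B\prec_M A$, which for two Cartan subalgebras upgrades to the required unitary conjugacy.

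I expect the main obstacle to be exactly the passage from the tensor-power hypothesis to the spectral gap, and the subsequent uniform control of the deformation along the $B$-decomposition. In the Bernoulli case the spectral gap is, so to speak, built into the product structure of the base space and requires no hypothesis; in the Gaussian case one must track the full Fock-space decomposition, separate the finitely many low tensor powers $n<n_0$ (which need not individually be weakly contained in $\lambda_\Gamma$) from the dominant high powers, and check that the deformation only excites the part of the representation where the gap is available. Once this representation-theoretic input is in place, the remaining ingredients — transversality, the no-almost-invariant-vectors estimate, and Popa's intertwining-by-bimodules conclusion — should transport from the Bernoulli setting to the Gaussian one with essentially routine modifications.
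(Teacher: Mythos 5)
There is a genuine gap, and it sits exactly where you locate ``the heart of the matter.'' Your part (i) (orbit equivalence/cocycle superrigidity via s-malleability and the spectral gap coming from the tensor-power hypothesis) is correct and is indeed how the paper reduces the problem: one only needs to show that $B=L^\infty(Y)$ is unitarily conjugate to $A$ inside $M$. Your representation-theoretic discussion of the spectral gap (Fock space decomposition, Fell absorption) is also sound and corresponds to Lemma \ref{spectralgap}. But your proposed proof of (ii) --- running Popa's spectral-gap rigidity ``along the normalizer $\mathcal N_M(B)''=M$'' to conclude $B\prec_M A$ --- cannot work as stated. The spectral gap/transversality mechanism (Corollary \ref{corIPV}) applies to a subalgebra $Q$ whose relative commutant is strongly non-amenable (or which has relative property (T)); only then does the deformation $\alpha_t$ converge uniformly on the unit ball of $Q$ and produce an intertwining. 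For an arbitrary group-measure-space Cartan subalgebra $B$, one has $B'\cap M=B$, which is abelian, hence amenable; there is no rigidity in $B$ or its commutant to play against the malleable deformation of $A\rtimes\Gamma$, and no uniform convergence of $\alpha_t$ on $B$ is available. Notably, your argument for (ii) never uses the group $\Lambda$ or the second crossed-product structure at all, so if it worked it would prove uniqueness of \emph{arbitrary} Cartan subalgebras in $M$, which is not what these methods yield.

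The missing idea is Ioana's dual coaction: from the unknown decomposition $M=B\rtimes\Lambda$ one builds the embedding $\Delta:M\rightarrow M\ootimes M$, $bv_s\mapsto bv_s\otimes v_s$, which transports the rigidity of $L\Lambda$ (spectral gap relative to $1\otimes M$, coming from non-amenability of $\Lambda\simeq$ the normalizing group) into the doubled algebra $M\ootimes M=(A\ootimes A)\rtimes(\Gamma\times\Gamma)$, where the Gaussian deformation is again available. One then shows $\Delta(L\Gamma)$ can be conjugated into $L\Gamma\ootimes L\Gamma$ (Step (1), via Corollary \ref{corIPV}), and --- this is the actual core of the paper --- that $\Delta(A)'\cap(M\ootimes M)\prec A\ootimes A$ (Step (2)), which requires generalizing Ioana's localization theorem to Gaussian actions (Theorem \ref{keythm}); the Bernoulli proof of that theorem uses the cylinder structure and strong compactness of the base, and replacing these by the $2$-mixing property plus the convex-hull projections $Q_{\mathcal C}$ is where all the new work lies. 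Steps (3) and (4) then convert this into the conjugacy $B\prec_M A$. None of this is ``routine transport'' from the Bernoulli case, and without the coaction your scheme has no source of rigidity to set against the deformation.
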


To prove Theorem A and Theorem B, we will adapt the proof used by Ioana and Ioana-Popa-Vaes to the context of Gaussian action. Let us recall the general strategy of their proof.

\subsection*{Steps of the proof in the Bernoulli case.}

Let $\Gamma$ be a group as in theorem A or B and $\Gamma \curvearrowright X = [0,1]^\Gamma$ the corresponding Bernoulli action. Assume that $\Lambda \curvearrowright (Y,\nu)$ is another pmp, free ergodic action such that \[L^\infty(X)\rtimes \Gamma = L^\infty(Y) \rtimes \Lambda.\]
Put $A = L^\infty(X)$, $B = L^\infty(Y)$ and $M = A \rtimes \Gamma$.

Thanks to Popa's orbit equivalence superrigidity theorems \cite[5.2 and 5.6]{IPV11} and \cite[Theorem 1.3]{Po08}, one only has to show that the two actions are orbit equivalent. More concretely it is enough to prove, by a result of Feldman and Moore \cite{FM77}, that $B$ is unitaly conjugate to $A$ inside $M$.

The main idea of the proof, due to Ioana, is to exploit the information given by the isomorphism $M = B \rtimes \Lambda$ via the dual co-action
\begin{eqnarray*}
\Delta: M & \rightarrow & M \ootimes M \\
bv_s & \mapsto & bv_s \otimes v_s,
\end{eqnarray*}
$b \in B$, $s \in \Lambda$ ($v_s$, $s \in \Lambda$, denote the canonical unitaries corresponding to the action of $\Lambda$).
This $*$-homomorphism $\Delta$ allows us to play against each other two data of the single action $\Gamma \curvearrowright X$: the rigidity of $\Delta(L\Gamma)$, and the malleability of the algebra $M \ootimes M = (A \ootimes A) \rtimes (\Gamma \times \Gamma)$.

Assume that $B$ is not unitarily conjugate to $A$, or equivalently that $B \nprec_M A$ by \cite[Theorem A.1]{Po06c}. We refer to Section 2.1 for the definition of Popa's intertwining symbol ``$\prec$". The rest of the proof can be divided into four steps, which lead to a contradiction.

\begin{enumerate}[{\bf Step (1).}] 
\item One shows that there exists a unitary $u \in M \ootimes M$ such that \[u \Delta(L\Gamma) u^* \subset L\Gamma \ootimes L\Gamma.\]
\item One further proves that the algebra $C := \Delta(A)' \cap (M \ootimes M)$ satisfies \[C \prec_{M \ootimes M} A \ootimes A.\]
\item The previous steps, and an enhanced version of Popa's conjugacy criterion \cite[Theorem 5.2]{Po06b} imply that roughly there exist a unitary $v \in M \ootimes M$, a group homomorphism $\delta : \Gamma \rightarrow \Gamma \times \Gamma$, and a character $\omega:\Gamma \rightarrow \C$ such that \[v\Delta(C)v^* = A \ootimes A \text{ and } v\Delta(u_g)v^* = \omega(g) u_{\delta(g)}, \, \forall g \in \Gamma.\]
\item Using Step (3), one can now show that if a sequence $(x_n)$ in $M$ has Fourrier coefficient (with respect to the decomposition $M = A \rtimes \Gamma$) which tend to zero pointwise in norm $\Vert \cdot \Vert_2$, then this is also the case of the sequence $\Delta(x_n)$, with respect to the decomposition $M \ootimes M = (M \ootimes A) \rtimes \Gamma$. This easily contradicts the fact that $B \nprec_M A$.
\end{enumerate}

\subsection*{What has to be adapted}

First note that Popa's orbit equivalence superrigidity results (\cite[5.2 and 5.6]{IPV11} and \cite[Theorem 1.3]{Po08}) are still valid for Gaussian actions as in Theorem A or Theorem B. Thus we only have to prove Steps (1)-(4) for such Gaussian actions.

Steps (3) and (4) are very general, and will work for any mixing action satisfying the conclusions of steps (1) and (2).

Step (1) is the result of Popa's deformation/rigidity strategy so it should not be specific to Bernoulli shifts. In \cite{IPV11}, it was a direct consequence of \cite[Corollary 4.3]{IPV11}. Using the results in \cite{Bo12}, one can easily get the Gaussian counterpart of \cite[Corollary 4.3]{IPV11}, namely Corollary \ref{corIPV}.

Finally, Step (2) relies on a beautiful localization theorem due to Ioana, \cite[Theorem 6.1]{Io11}. That theorem states that if $D$ is an abelian subalgebra of $M \ootimes M$ which is normalized by ``enough" unitaries in $L(\Gamma \times \Gamma)$, then either $D' \cap (M \ootimes M) \prec A \ootimes A$, or $D \prec M \ootimes L\Gamma$, or $D \prec L\Gamma \ootimes M$. 
This theorem is applied to $D = \Delta(A)$. Using mixing properties, and the fact that $B \nprec A$, the last two cases cannot hold and Step (2) follows.

{\it So the point of the whole proof of theorems A and B is to generalize Ioana's localization theorem \cite[Theorem 6.1]{Io11}}. It will be done in Section 3, Theorem \ref{keythm}. We explain below the main difficulties to obtain such a generalization.

\subsection*{Main difficulties in the generalization}
Unlike Bernoulli shifts, general mixing Gaussian actions do not satisfy the following properties, which were crucial in Ioana's argument.
\begin{itemize}
\item {\it Cylinder structure}: If $\Gamma \curvearrowright [0,1]^\Gamma$ is a Bernoulli action, we call {\it finite cylinder subalgebra} a subalgebra of $A = L^\infty([0,1]^\Gamma)$ of the form $A_F = L^\infty([0,1]^F)$, for some finite subset $F \subset \Gamma$. Then the union of all finite cylinder subalgebras is a strongly dense $*$-subalgebra $A_0$ of $A$, which is stable under the action of $\Gamma$. In fact, $A_0$ is a graded $\C \Gamma$-module;
\item {\it ``strong compactness" property}: If $\Gamma \curvearrowright [0,1]^\Gamma$ is a Bernoulli action, there exist a strongly dense $*$-subalgebra $A_0$ of $L^\infty([0,1]^\Gamma)$ such that for any $a,b \in A_0 \ominus \C$, $\langle \sigma_g(a),b \rangle = 0$ if $g \in \Gamma$ large enough.
\end{itemize}

The use of the strong compactness property can be avoided using $\varepsilon$-orthogonality and a trick involving convex combinations. To avoid using the cylinder structure, the idea is to replace cylinders by general finite dimensional subsets of $L^\infty(X)$, and use a multiple mixing property automatically enjoyed by mixing Gaussian actions.

\begin{defn*}
A trace-preserving action $\Gamma \curvearrowright^\sigma A$ of a countable group on an abelian von Neumann algebra is {\it $2$-mixing} if for any $a, b, c \in A$, the quantity $\tau(a\sigma_{g}(b)\sigma_{h}(c))$ tends to $\tau(a)\tau(b)\tau(c)$ as $g, h, g^{-1}h$ tend to infinity.
\end{defn*}

In fact, each step of the proof still holds for general $s$-malleable actions (in the sense of Popa \cite{Po08}) which are $2$-mixing:

\begin{thmc}
Let $\Gamma$ be an ICC group and $\Gamma \curvearrowright^\sigma (X,\mu)$ be a free ergodic action of $\Gamma$. Assume that $\sigma$ is $2$-mixing and $s$-malleable in the sense of Popa \cite{Po08} and that one of the following two conditions holds.
\begin{itemize}
\item $\Gamma$ is w-rigid or
\item $\Gamma$ is non-amenable and is isomorphic to the product of two infinite groups, and some tensor power of the Koopman representation $\sigma_{\vert L^2(X) \ominus \C}$ is weakly contained in the regular representation of $\Gamma$.
\end{itemize}
Then $\sigma$ is W$^*$-superrigid.
\end{thmc}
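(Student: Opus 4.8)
The plan is to carry out, in the general framework of a $2$-mixing $s$-malleable action, the four-step deformation/rigidity scheme recalled above, reducing W$^*$-superrigidity to a single intertwining statement that is then contradicted. Suppose $\Lambda \curvearrowright (Y,\nu)$ is a free ergodic pmp action together with an isomorphism $L^\infty(X) \rtimes \Gamma \simeq L^\infty(Y) \rtimes \Lambda =: M$, and set $A = L^\infty(X)$, $B = L^\infty(Y)$. Under either of the two hypotheses, Popa's orbit equivalence superrigidity theorems (\cite[5.2 and 5.6]{IPV11}, \cite[Theorem 1.3]{Po08}) apply to $\sigma$, so by Feldman--Moore \cite{FM77} it suffices to show that $B$ is unitarily conjugate to $A$ inside $M$; by \cite[Theorem A.1]{Po06c} this amounts to $B \prec_M A$. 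I would therefore assume $B \nprec_M A$ and seek a contradiction, working through the dual coaction $\Delta : M \to M \ootimes M$, $bv_s \mapsto bv_s \otimes v_s$ ($b \in B$, $s \in \Lambda$).

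First I would establish Step (1): a unitary $u \in M \ootimes M$ with $u\Delta(L\Gamma)u^* \subset L\Gamma \ootimes L\Gamma$. This is where the rigidity hypothesis on $\Gamma$ is played against the $s$-malleable deformation of $M \ootimes M = (A \ootimes A) \rtimes (\Gamma \times \Gamma)$: in the w-rigid case via the relative property (T) of the infinite normal subgroup, and in the product case via Popa's spectral gap rigidity, which is exactly what forces the assumption that some tensor power of the Koopman representation be weakly contained in the regular representation. Both cases are encapsulated in Corollary \ref{corIPV}, the $s$-malleable analogue of \cite[Corollary 4.3]{IPV11} obtained from the results of \cite{Bo12}. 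Next, Step (2): I would apply the generalized localization theorem, Theorem \ref{keythm}, to the abelian algebra $\Delta(A)$, whose normalizer contains the unitaries $\Delta(u_g)$ now sitting inside $L\Gamma \ootimes L\Gamma$ by Step (1). The theorem returns one of three alternatives for $C := \Delta(A)' \cap (M \ootimes M)$: either $C \prec A \ootimes A$, or $\Delta(A) \prec M \ootimes L\Gamma$, or $\Delta(A) \prec L\Gamma \ootimes M$. The $2$-mixing of $\sigma$ together with the standing assumption $B \nprec_M A$ excludes the last two (each would force $A$, hence $B$, to intertwine into $L\Gamma$), leaving $C \prec_{M \ootimes M} A \ootimes A$.

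Steps (3) and (4) are general and mixing-driven, so I would run them essentially verbatim. An enhanced version of Popa's conjugacy criterion \cite[Theorem 5.2]{Po06b} upgrades Steps (1)--(2) to a rigid description of $\Delta$: there exist a unitary $v \in M \ootimes M$, a homomorphism $\delta : \Gamma \to \Gamma \times \Gamma$ and a character $\omega : \Gamma \to \C$ with $v\Delta(C)v^* = A \ootimes A$ and $v\Delta(u_g)v^* = \omega(g)\,u_{\delta(g)}$ for all $g \in \Gamma$. This description then lets me transfer pointwise $\Vert \cdot \Vert_2$-decay of Fourier coefficients from the decomposition $M = A \rtimes \Gamma$ to $\Delta(M) \subset (M \ootimes A) \rtimes \Gamma$, which directly contradicts $B \nprec_M A$, completing the argument. (Theorems A and B are then recovered by checking that a mixing Gaussian action is free, ergodic, $s$-malleable and $2$-mixing, the latter coming from the multiple mixing of Gaussian actions attached to a mixing representation.)

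The main obstacle is Step (2), that is, the proof of Theorem \ref{keythm} itself. Ioana's original localization argument leans on two features of Bernoulli shifts that are unavailable here: the cylinder (graded $\C\Gamma$-module) structure of a dense subalgebra $A_0$, and a strong compactness property giving exact orthogonality $\langle \sigma_g(a), b\rangle = 0$ for $g$ large. I would circumvent the cylinder structure by working with arbitrary finite-dimensional subspaces of $L^\infty(X)$ and driving all estimates with the $2$-mixing property, and I would replace exact orthogonality by $\varepsilon$-orthogonality, absorbing the resulting errors through a convexity and averaging trick. Making these approximations compatible with the localization dichotomy is where essentially all the new difficulty lies, and is the content of Section 3.
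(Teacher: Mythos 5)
Your proposal follows essentially the same route as the paper: reduce to orbit equivalence superrigidity via Feldman--Moore, assume $B \nprec_M A$, run Ioana's four-step scheme through the dual coaction $\Delta$, obtain Step (1) from Corollary \ref{corIPV}, obtain Step (2) from the generalized localization theorem (Theorem \ref{keythm}) applied to $\Delta(A)$, and keep Steps (3)--(4) verbatim; you also correctly identify that the real work is replacing the cylinder structure and exact orthogonality of Bernoulli shifts by finite-dimensional subspaces, $2$-mixing and $\varepsilon$-orthogonality with a convexity trick. This matches the paper's argument, including the observation that only $2$-mixing and malleability of the action are actually used.
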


\subsection*{Organization of the article}

Apart from the introduction, this article contains three other sections. Section 2 is the preliminary section, in which we recall Popa's intertwining techniques, and several facts on Gaussian actions that we proved in \cite{Bo12}. Section 3 is devoted to proving Theorem \ref{keythm}, which generalizes \cite[Theorem 6.1]{Io11}. Finally, an extra-section is devoted to constructing a large class of II$_1$-factors which are not isomorphic to group von Neumann algebras. This will be a direct application of Theorem \ref{keythm} and the work of Ioana-Popa-Vaes.

\subsection*{Acknowledgement}

We warmly thank Cyril Houdayer and Adrian Ioana for their valuable advice and comments about this work.

\section{Preliminaries}

\subsection{Intertwining by bimodules}

We recall here an essential tool introduced by Popa, the so-called intertwining by bimodules' theorem.

\begin{thm}[Popa, \cite{{Po06a},{Po06d}}]
\label{intertwining}
Let $P,Q \subset M$ be finite von Neumann algebras (with possibly non-unital inclusions). Then the following are equivalent.
\begin{itemize}
\item There exist projections $p \in P$, $q \in Q$, a normal $*$-homomorphism $\psi: pPp \rightarrow qQq$, and a non-zero partial isometry $v \in pMq$ such that $xv = v\psi(x)$, for all $x \in pPp$;
\item There exists a $P$-$Q$ subbimodule $H$ of $L^2(1_PM1_Q)$ which has finite index when regarded as a right $Q$-module;
\item There is no net of unitaries $(u_i) \in \mathcal U(P)$ such that for all $x,y \in M$, \[\Vert E_Q(1_Qx^*u_iy1_Q) \Vert_2 \rightarrow 0.\]
\end{itemize}
\end{thm}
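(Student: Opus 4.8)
The plan is to prove the theorem through the cycle of implications $(1)\Rightarrow(2)\Rightarrow(3)\Rightarrow(1)$. Throughout I would work inside the Jones basic construction $\langle M,e_Q\rangle$ of the conditional expectation $E_Q$ onto $Q\subset 1_QM1_Q$, equipped with its canonical semifinite trace $\Tr$ determined by $\Tr(xe_Qy)=\tau(xy)$, keeping careful track of the corner projections $1_P,1_Q$. The implications $(1)\Rightarrow(2)$ and $(2)\Rightarrow(3)$ are the soft ones. For the first, given $v\in pMq$ with $xv=v\psi(x)$, the inclusion $\psi(pPp)\subset qQq$ forces $PvQ\subseteq vQ$, so that $H:=\overline{PvQ}$ (closure in $\Vert\cdot\Vert_2$) is contained in the closure of the singly generated right $Q$-module $vQ$, hence is a $P$-$Q$ subbimodule of $L^2(1_PM1_Q)$ of finite right index. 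For the second, I would argue contrapositively: a net $(u_i)\subset\mathcal U(P)$ witnessing the negation of $(3)$ forces $u_ie_Qu_i^*\to0$ weakly, and therefore makes the pairing against any finite-trace projection $f\in P'\cap\langle M,e_Q\rangle$ vanish in the limit; but the orthogonal projection onto a finite-index $P$-$Q$ subbimodule is exactly such an $f$, and it is invariant under conjugation by $\mathcal U(P)$, a contradiction.

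The crux, and the step I expect to be the main obstacle, is $(3)\Rightarrow(1)$, which I would prove by Popa's averaging argument. Consider the $\Vert\cdot\Vert_{2,\Tr}$-closed convex hull
\[
\mathcal C=\overline{\conv}\{\,ue_Qu^*:u\in\mathcal U(P)\,\}\subset L^2(\langle M,e_Q\rangle,\Tr),
\]
which genuinely lies in this Hilbert space because $\Tr(ue_Qu^*)$ is finite and independent of $u$. Let $a$ be the unique element of $\mathcal C$ of minimal $\Vert\cdot\Vert_{2,\Tr}$-norm. Since $\mathcal C$ is invariant under the $\Vert\cdot\Vert_{2,\Tr}$-isometries $x\mapsto uxu^*$ for $u\in\mathcal U(P)$, uniqueness of the minimiser gives $uau^*=a$ for all such $u$; thus $0\le a\in P'\cap\langle M,e_Q\rangle$ with $\Tr(a)<\infty$ by lower semicontinuity of $\Tr$.

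It then remains to see that hypothesis $(3)$ forces $a\neq0$. If instead $a=0$, then $0\in\mathcal C$, so there are convex combinations $\sum_k\lambda_k^{(n)}u_k^{(n)}e_Q(u_k^{(n)})^*\to0$ in $\Vert\cdot\Vert_{2,\Tr}$; pairing against the vectors $ye_Qy^*$ and using the standard identities relating inner products against $ye_Qy^*$ to the quantities $\Vert E_Q(x^*uy)\Vert_2$, one extracts from these a single net of unitaries $u_i\in\mathcal U(P)$ with $\Vert E_Q(1_Qx^*u_iy1_Q)\Vert_2\to0$ for all $x,y$, contradicting $(3)$. Hence $a\neq0$, and a spectral projection $f=1_{[\varepsilon,\infty)}(a)$ with $\varepsilon>0$ small is a nonzero finite-trace projection in $P'\cap\langle M,e_Q\rangle$; its range is a $P$-$Q$ subbimodule of $L^2(1_PM1_Q)$ of finite right index, and extracting from a suitable bounded vector of this bimodule a partial isometry $v$ and a normal $*$-homomorphism $\psi$ by a polar-decomposition argument yields the data of $(1)$. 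The main difficulties I anticipate are the careful bookkeeping of the corners and of the non-unital inclusions, and above all the quantitative translation used to rule out $a=0$: converting the averaged smallness of $\sum_k\lambda_k u_ke_Qu_k^*$ into an honest degenerating net of unitaries is precisely where hypothesis $(3)$ enters and constitutes the conceptual heart of the proof.
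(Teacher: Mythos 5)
The paper does not actually prove this statement: Theorem \ref{intertwining} is recalled verbatim from Popa's work \cite{Po06a,Po06d} and used as a black box, so there is no in-text proof to compare yours against. Your architecture is the standard one (basic construction with its semifinite trace, the $\Vert\cdot\Vert_{2,\Tr}$-minimal element of $\overline{\conv}\lbrace ue_Qu^*\rbrace$, extraction of a finite-trace $P$-central projection), and the hard direction $(3)\Rightarrow(1)$ is organized correctly. Two of your ``soft'' steps, however, would fail as written.

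First, in $(1)\Rightarrow(2)$ the claim that $PvQ\subseteq vQ$ is false: the intertwining relation only gives $pPp\cdot v\cdot Q\subseteq\overline{vQ}$, whereas for general $y\in P$ the vector $yv=(yp)v$ need not lie in $\overline{vQ}$ (take $y$ a partial isometry with $y^*y\leq p$ and $yy^*$ orthogonal to $p$). Hence $\overline{PvQ}$ is indeed a $P$-$Q$ bimodule, but its right $Q$-dimension is a priori a sum over the partial isometries carrying $p$ onto its central support in $P$, and when $P$ is not a factor this sum can be infinite. The repair is standard --- cut by a central projection $z'$ of $P$ that is subequivalent to finitely many copies of $p$ and take $\overline{z'PvQ}$ --- but it is a genuine step, not a triviality.

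Second, and more seriously, in $(2)\Rightarrow(3)$ you pair $u_ie_Qu_i^*$ against the finite-trace projection $f\in P'\cap\langle M,e_Q\rangle$ onto the bimodule. By $P$-centrality of $f$ this pairing equals $\Tr(fe_Q)$ for every $i$, so your argument only yields $\Tr(fe_Q)=0$, i.e.\ $fe_Q=0$ --- which is not a contradiction, since the range of $f$ may perfectly well be orthogonal to $L^2(Q)$ (for instance $f$ the projection onto $\overline{wQ}$ with $E_Q(w)=0$). You must instead pair against $xe_Qx^*$ for all $x\in M$, obtaining $\Vert f\hat{x}\Vert_2^2=\lim_i\langle u_ixe_Qx^*u_i^*,f\rangle_{\Tr}=0$ for every $x$ and hence $f=0$; equivalently, approximate $f$ in $\Vert\cdot\Vert_{2,\Tr}$ by finite sums $\sum_jx_je_Qy_j$ and compute $\Tr(f)=\langle u_ifu_i^*,f\rangle_{\Tr}\rightarrow 0$. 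This is exactly the identity relating $\langle\,\cdot\,,ye_Qy^*\rangle_{\Tr}$ to $\Vert E_Q(\cdot)\Vert_2$ that you invoke later to rule out $a=0$, so the idea is already in your toolkit; it just has to be used here as well.
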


Following \cite{Po06a}, if $P,Q \subset M$ satisfy these conditions, we say that {\it a corner of $P$ embeds into $Q$ inside $M$}, and we write $P \prec_M Q$.

Assume that we are in the concrete situation where $M$ is of the form $M = B \rtimes \Gamma$ for some trace preserving action of $\Gamma$ on a finite von Neumann algebra and $Q = B$. Denote by $u_g, g \in \Gamma$ the canonical unitaries in $M$ implementing the action of $\Gamma$.
Then it is easy to check that a subalgebra $P \subset M$ satisfies $P \nprec B$ if and only if there exists a net of unitaries $v_i \in \mathcal U(P)$ such that \[ \Vert E_B(v_iu_g^*) \Vert_2 \rightarrow 0, \, \forall g \in \Gamma.\]
This result can be improved as follows.

\begin{lem}[Ioana, \cite{Io11}, Theorem 1.3.2]
\label{ioanascriterion}
Let $\Gamma \curvearrowright B$ be a trace preserving action on a finite von Neumann algebra $(B,\tau)$. Put $M = B \rtimes \Gamma$, and let $P \subset M$ be a von Neumann subalgebra. Then $P \nprec B$ if and only if there exists a net of unitaries $v_i \in \mathcal U(P)$ such that \[ \lim_n \left( \sup_{g \in \Gamma} \Vert E_B(v_iu_g^*) \Vert_2 \right) = 0.\]
\end{lem}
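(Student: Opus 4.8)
The plan is to prove the non-trivial implication ``$P \nprec B \Rightarrow$ the net exists''; the converse is immediate, since $\sup_g \|E_B(v_i u_g^*)\|_2 \to 0$ forces $\|E_B(v_i u_g^*)\|_2 \to 0$ for each fixed $g$, whence $P \nprec B$ by the remark preceding the statement (a reformulation of Theorem \ref{intertwining}). Note also that the existence of such a net is equivalent to $\inf_{v \in \mathcal U(P)} \sup_{g} \|E_B(v u_g^*)\|_2 = 0$, so it suffices to produce, for every $\varepsilon > 0$, a single unitary $v \in \mathcal U(P)$ with $\sup_g \|E_B(vu_g^*)\|_2 < \varepsilon$.

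I would work inside Jones' basic construction $\langle M, e_B\rangle$ with its canonical semifinite trace $\Tr$, and set $q_g := u_g e_B u_g^*$. A direct computation using $e_B x e_B = E_B(x) e_B$ and $\Tr(a e_B b) = \tau(ab)$ yields the two facts on which everything rests: first, the identity
\[ \Tr(v e_B v^*\, q_g) = \|E_B(v u_g^*)\|_2^2 \qquad (v \in \mathcal U(P),\ g \in \Gamma);\]
second, since $E_B(u_k) = \delta_{k,e}$, the family $\{q_g\}_{g \in \Gamma}$ consists of pairwise orthogonal projections with $\sum_g q_g = 1$ and $\Tr(q_g) = 1$, i.e. an orthonormal system in $L^2(\langle M, e_B\rangle, \Tr)$. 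Thus $g \mapsto \|E_B(vu_g^*)\|_2^2$ is precisely the profile of the finite-trace projection $v e_B v^*$ along this system, and the goal is to make this profile uniformly small for a \emph{single} $v$.

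The input from $P \nprec B$ I would use is the standard convex-hull reformulation of Theorem \ref{intertwining}: $P \nprec_M B$ if and only if $0$ lies in the $\|\cdot\|_{2,\Tr}$-closure of $\conv\{v e_B v^* : v \in \mathcal U(P)\}$. Hence there are convex combinations $c_n = \sum_k \lambda_k^{(n)}\, v_k^{(n)} e_B (v_k^{(n)})^*$ with $\|c_n\|_{2,\Tr} \to 0$. Pairing against $q_g$ and using Cauchy--Schwarz with $\|q_g\|_{2,\Tr} = 1$ gives, uniformly in $g$,
\[ \sum_k \lambda_k^{(n)} \, \|E_B(v_k^{(n)} u_g^*)\|_2^2 = \Tr(c_n\, q_g) \le \|c_n\|_{2,\Tr} \xrightarrow[n\to\infty]{} 0. \]
In other words, the convex combination of the Fourier profiles of the $v_k^{(n)}$ is uniformly spread out. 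This is the soft half of the argument.

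The main obstacle is exactly the passage from this spread-out \emph{average of profiles} to a single unitary with a spread-out profile: convexity allows mass to be shared between unitaries concentrated at different group elements, and selecting one $v_k^{(n)}$ does not help, since each $v_k^{(n)} e_B (v_k^{(n)})^*$ is a projection of full trace $1$, so its profile always has $\ell^2$-norm $1$. The resolution, which is the technical heart and is due to Ioana, exploits that $\mathcal U(P)$ is a \emph{group}: for $v, w \in \mathcal U(P)$ one has $E_B(vw\, u_g^*) = \sum_h E_B(v u_h^*)\, \sigma_h\!\big(E_B(w u_{h^{-1}g}^*)\big)$, where $\sigma_h(\cdot)=u_h(\cdot)u_h^*$, so the profile of a product is a twisted convolution of the factors' profiles, and convolution smooths concentration. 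Iterating multiplication in $\mathcal U(P)$ — passing to suitable products of the $v_k^{(n)}$ — one manufactures honest unitaries whose profiles inherit the uniform spreading of the convex combinations above, driving $\sup_g \|E_B(vu_g^*)\|_2$ below any prescribed $\varepsilon$. Equivalently, one may first linearize the supremum by self-tensoring, using $\sum_g \|E_B(vu_g^*)\|_2^4 = \|E_N(v \otimes v)\|_2^2$ for the diagonal subalgebra $N = (B \ootimes B) \rtimes_{\mathrm{diag}} \Gamma \subset M \ootimes M$, and reduce the uniform estimate to controlling this $\ell^4$-quantity. This spreading step is where the real work lies; the rest is bookkeeping.
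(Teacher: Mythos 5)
The paper offers no proof of this lemma at all: it is quoted directly from \cite[Theorem 1.3.2]{Io11}. So there is no internal argument to compare yours with, and I assess your attempt on its own terms. The framework you set up is correct: the easy direction, the identity $\Tr(v e_B v^* q_g) = \Vert E_B(v u_g^*)\Vert_2^2$, the orthogonality and normalization of the $q_g = u_g e_B u_g^*$, and the convexity step (the minimal-$\Vert\cdot\Vert_{2,\Tr}$ element of $\overline{\conv}\lbrace v e_B v^* : v \in \mathcal U(P)\rbrace$ vanishes when $P \nprec_M B$, giving convex combinations with $\sum_k \lambda_k^{(n)} \Vert E_B(v_k^{(n)} u_g^*)\Vert_2^2 \leq \Vert c_n \Vert_{2,\Tr} \rightarrow 0$ uniformly in $g$) are all sound and standard.

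But the proof stops exactly where the theorem starts. You correctly identify the decisive step --- passing from a uniformly small \emph{convex combination} of profiles to a \emph{single} unitary with uniformly small profile --- and then assert it instead of proving it. Neither mechanism you sketch closes the gap. (i) The product/convolution heuristic is false as stated: from $(vw)_g = \sum_h v_h \sigma_h(w_{h^{-1}g})$ one gets $\Vert (vw)_g \Vert_2^2$ as the square of a sum with uncontrolled cross terms, not the convolution $\sum_h \Vert v_h\Vert_2^2 \Vert w_{h^{-1}g}\Vert_2^2$; and products of unitaries can concentrate rather than spread (take $w = v^*$, so $h_{vw} = \delta_e$). You give no rule for selecting the ``suitable products'' and no quantitative reason they spread, and that selection is precisely the content of the theorem. (ii) The tensor-square identity $\sum_g \Vert E_B(v u_g^*)\Vert_2^4 = \Vert E_N(v \otimes v)\Vert_2^2$ for $N = (B \ootimes B) \rtimes_{\mathrm{diag}} \Gamma$ is correct and does linearize the supremum, since $\sup_g \Vert E_B(v u_g^*)\Vert_2^2 \leq \Vert E_N(v \otimes v)\Vert_2$; but it converts the problem into showing that $P \nprec_M B$ forces $\lbrace v \otimes v : v \in \mathcal U(P)\rbrace'' \nprec_{M \ootimes M} N$, so that the minimal-norm argument run over the group $\lbrace v \otimes v\rbrace$ yields one $v$ with $\Vert E_N(v\otimes v)\Vert_2$ small. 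That implication is itself a nontrivial intertwining statement and is nowhere addressed. As written, the proposal is a correct framing plus an honest placeholder where the proof should be; it does not establish the lemma.
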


Another natural question one may wonder: What does it mean to embed into the group algebra $L\Gamma$ inside a crossed-product algebra $M = A \rtimes \Gamma$? In some specific circumstances, this implies the unitary conjugacy into $L\Gamma$, as the following standard result shows.

We denote by $\mathcal{N}_M(Q) = \lbrace u \in \mathcal{U}(M), \, uQu^* = Q \rbrace$ the {\it normalizer} of a subalgebra $Q$ of a von Neumann algebra $M$. The {\it quasi-normalizer} $\mathcal{QN}_M(Q)$ of $Q$ in $M$ is the *-subalgebra of $M$ formed by $Q$-$Q$ finite elements. We recall that an element $x \in M$ is $Q$-$Q$ finite if there exist $x_1, \cdots, x_k \in M$ such that \[xQ \subset \sum_{i=1}^k Qx_i \text{ and } Qx \subset \sum_{i=1}^k x_iQ.\]

\begin{prop}
\label{embedgroup}
Let $\Gamma \curvearrowright A$ be a free mixing action of an ICC group $\Gamma$ on an abelian von Neumann algebra, and let $N$ be a type II$_1$ factor. Put $M = (A \rtimes \Gamma) \ootimes N$, and assume that $Q \subset pMp$ is a von Neumann subalgebra such that $Q \nprec_M 1 \otimes N$. Put $P = \mathcal{QN}_{pMp}(Q)''$.
\begin{enumerate}
\item If $Q \prec_M L\Gamma \ootimes N$ then there exists a non-zero partial isometry $v \in pM$ such that $vv^* \in \mathcal Z(P)$ and $v^*Pv \subset L\Gamma \ootimes N$.
\item If $rQ \prec_M L\Gamma \ootimes N$ for all $r \in Q' \cap pMp$ then there exists a unitary $u \in \mathcal U(M)$ such that $uPu^* \subset L\Gamma \ootimes N$.
\end{enumerate}
\end{prop}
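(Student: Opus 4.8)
The plan is to combine Popa's intertwining theorem with the fact that the inclusion $L\Gamma \ootimes N \subset M$ is mixing relative to $1 \otimes N$, this being inherited from the mixing of $\Gamma \curvearrowright A$. Write $\tilde{A} = A \ootimes N$, so that $M = \tilde{A} \rtimes \Gamma$ with canonical unitaries $u_g$, and set $D := 1 \otimes N \subset \tilde{A}$ and $B := L\Gamma \ootimes N$. (Since $\Gamma$ is ICC and the action is free ergodic, $A \rtimes \Gamma$ is a II$_1$ factor, and tensoring with the factor $N$ keeps $M$ a II$_1$ factor; this will matter at the very end.)

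The crux of the argument is the following mixing lemma: \emph{if $Q_0 \subset qBq$ is a von Neumann subalgebra with $Q_0 \nprec_M D$, then $\mathcal{QN}_{qMq}(Q_0)'' \subset qBq$.} To see it, take monomials $\xi u_h, \eta u_k$ with $\xi, \eta \in \tilde{A} \ominus D = (A \ominus \C) \ootimes N$ and a unitary $u = \sum_g u_{(g)} u_g \in \mathcal{U}(B)$, so that $u_{(g)} \in D$. Since $D$ is fixed by $\sigma_h \otimes \id$, one computes
\[ E_B(\xi u_h \, u \, \eta u_k) = \sum_g E_D\bigl(\xi \, u_{(g)} \, (\sigma_{hg} \otimes \id)(\eta)\bigr) \, u_{hgk}, \]
and each scalar coefficient is a sum of inner products of the form $\langle \sigma_{hg}(\cdot), \cdot \rangle$ between elements of $A \ominus \C$, which decay as $hg \to \infty$ by mixing. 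The hypothesis $Q_0 \nprec_M D$ furnishes, via the third condition of Theorem \ref{intertwining}, a net of unitaries in $Q_0$ whose Fourier mass escapes to infinity along $\Gamma$; feeding these into the computation above shows $\Vert E_B(a u_n b) \Vert_2 \to 0$ for all $a, b \in M \ominus B$. Averaging this over the quasi-normalizing relation $Q_0 w \subset \sum_j w_j Q_0$ then forces $w = E_B(w) \in qBq$ for every $w \in \mathcal{QN}_{qMq}(Q_0)$. \emph{This mixing lemma, and in particular making the interplay between the mixing decay and the escape to infinity quantitative, is the main obstacle.}

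With the lemma in hand I would prove (1) as follows. Applying Theorem \ref{intertwining} to $Q \prec_M B$ yields a partial isometry $v_0$ and a normal $*$-homomorphism $\psi$ with $a v_0 = v_0 \psi(a)$ for $a$ in a corner of $Q$ and $\psi(Q) \subset qBq$. One checks $\psi(Q) \nprec_M D$: otherwise composing the two intertwiners would give $Q \prec_M D$, against the hypothesis. For $y \in \mathcal{QN}_{pMp}(Q)$ the element $v_0^* y v_0$ quasi-normalizes $\psi(Q)$ inside $qMq$, so the mixing lemma gives $v_0^* y v_0 \in qBq$ and hence $v_0^* P v_0 \subset B$. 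As $e := v_0 v_0^* \in Q' \cap pMp \subset P$, the map $x \mapsto v_0^* x v_0$ is a normal embedding of $ePe$ into $B$; a routine central-support maneuver, spreading $e$ over its central support $z \in \mathcal{Z}(P)$ by partial isometries of $P$, upgrades it to a partial isometry $v \in pM$ with $vv^* = z \in \mathcal{Z}(P)$ and $v^* P v \subset B$. This is statement (1).

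For (2) I would run a maximality argument, first noting that $Q \nprec_M D$ passes to every corner: if $r \in Q' \cap pMp$ is a nonzero projection and $u_i \in \mathcal{U}(Q)$, then $r u_i$ is a unitary of $rQr$, so the criterion of Theorem \ref{intertwining} transfers from $Q$ to $rQ$, giving $rQ \nprec_M D$. Choose a maximal family of partial isometries with pairwise orthogonal projections $v_\alpha v_\alpha^* \in \mathcal{Z}(P)$ and $v_\alpha^* P v_\alpha \subset B$, and put $z = \sum_\alpha v_\alpha v_\alpha^*$. If $z \neq p$ then $p - z \in \mathcal{Z}(P) \subset Q' \cap pMp$, so $(p-z)Q \prec_M B$ by hypothesis while $(p-z)Q \nprec_M D$ by the previous remark; part (1) applied to this corner produces one more partial isometry, contradicting maximality. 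Hence $z = p$, and assembling the family gives a partial isometry $v$ with $vv^* = p$, $v^* v \in B$ and $v^* P v \subset B$. Finally, since $M$ is a II$_1$ factor and $\tau(p) = \tau(v^* v)$, I would complete $v$ to a unitary $u_0 \in \mathcal{U}(M)$ with $u_0^* P u_0 = v^* P v \subset B$; then $u = u_0^*$ satisfies $u P u^* \subset B$.
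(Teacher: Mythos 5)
Your proposal is correct and follows essentially the same route as the paper: the mixing lemma you prove by hand is exactly Popa's quasi-normalizer theorem (\cite[Theorem 3.1]{Po06a}), which the paper simply cites, together with \cite[Remark 3.8]{Va08} for the step guaranteeing $\psi(Q) \nprec_M 1 \otimes N$, and your central-support maneuver and exhaustion/cutting-and-pasting for (2) match the paper's argument, including the use of factoriality of $L\Gamma \ootimes N$ to orthogonalize right supports before summing partial isometries. No substantive differences.
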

\begin{proof}
(1) By assumption, there exist projections $p_0 \in Q$, $q \in L\Gamma \ootimes N$, a non-zero partial isometry $v \in p_0Mq$ and a *-homomorphism $\varphi : p_0Qp_0 \rightarrow q(L\Gamma \ootimes N)q$ such that for all $x \in p_0Qp_0$, one has $xv = v\varphi(x)$.

By \cite[Remark 3.8]{Va08}, one can assume that $\varphi(p_0Qp_0) \nprec_M 1 \otimes N$. Hence \cite[Theorem 3.1]{Po06a} implies that $\mathcal{QN}_{qMq}(\varphi(p_0Qp_0))'' \subset L\Gamma \ootimes N$. But we see that $v^*Pv \subset \mathcal{QN}_{qMq}(\varphi(p_0Qp_0))''$. Moreover $vv^* \in p_0(Q' \cap M) \subset P$. However $vv^*$ is not necessarily in $\mathcal Z(P)$ but one can modify $v$ as follows to obtain such a condition.

Take partial isometries $v_1,\cdots,v_k \in P$ such that $v_i^*v_i \leq vv^*$, $i = 1,\cdots,k$ and $\sum_{i=1}^k v_iv_i^*$ is a central projection in $P$. Since $L\Gamma \ootimes N$ is a factor, there exist partial isometries $w_1,\cdots,w_k \in L\Gamma \ootimes N$ such that $w_iw_i^* = v^*v_i^*v_iv$ and $w_iw_j^* = 0$, for all $1 \leq i\neq j \leq k$. Define a non-zero partial isometry by $w = \sum_i v_ivw_i \in pM$. We get
\begin{itemize}
\item $ww^* = \sum_i v_ivw_iw_i^*v^*v_i^* = \sum_i v_iv_i^* \in \mathcal{Z}(P)$;
\item $w^*Pw \subset \sum_i w_i^*v^*Pvw_i \subset L\Gamma \ootimes N$.
\end{itemize}
(2) Consider a maximal projection $r_0 \in Q' \cap pMp$ for which there exists a unitary $u \in \mathcal U(M)$ such that $u(r_0Pr_0)u^* \subset L\Gamma \ootimes N$. One has to show that $r_0 = p$. Otherwise we can cut by $r = p - r_0$, and we obtain an algebra $rQ \subset rMr$ such that $rQ \prec_M L\Gamma \ootimes N$ and $rQ \nprec_M 1 \otimes N$. Remark that $rPr \subset \mathcal{QN}_{rMr}(rQ)''$. Applying (1), we get that there exists a non-zero partial isometry $v \in rM$, such that $vv^* \in (rPr)' \cap rMr \subset (rQ)' \cap rMr$ and $v^*(rPr)v \subset L\Gamma \ootimes N$.

Since $L\Gamma \ootimes N$ is a factor, modifying $v$ if necessary, one can assume that $v^*v \perp ur_0u^*$. Now the following ``cutting and pasting'' argument contradicts the maximality of $r_0$. The partial isometry $w_0 = ur_0 + v^*$ satisfies $w_0^*w_0 = r_0 + vv^* \in Q' \cap pMp$ and $w_0(r_0 + vv^*)Qw_0^* \subset L\Gamma$. Extending $w_0$ into a unitary, we obtain a $w \in \mathcal U(M)$ satisfying $w(r_0 + vv^*)Qw^* \subset L\Gamma$.
\end{proof}

\subsection{Gaussian actions}
\label{gaussianactions}

To any orthogonal representation $\pi: \Gamma \rightarrow \mathcal O(H)$ of a discrete countable group, one can associate a trace preserving action $\sigma_\pi: \Gamma \curvearrowright A$ on an abelian von Neumann algebra, called the Gaussian action associated with $\pi$. This Gaussian action can be constructed as follows. For more explicit constructions, see \cite[Appendix A.7]{BHV08} or \cite{PS10}.
Consider the unique abelian finite von Neumann algebra $(A,\tau)$ generated by unitaries $(w(\xi))_{\xi \in H}$ such that:
\begin{itemize}
\item $w(0) = 1$ and $w(\xi + \eta) = w(\xi)w(\eta)$, $w(\xi)^* = w(-\xi)$, for all $\xi,\eta \in H$;
\item $\tau(w(\xi)) = \exp(-\Vert \xi \Vert^2)$, for all $\xi \in H$.
\end{itemize}
It is easy to check that these conditions imply that the vectors $(w(\xi))_{\xi \in H_\R}$ are linearly independent and span a weakly dense $^*$-subalgebra of $A$. Then the Gaussian action $\sigma_\pi$ is defined by $(\sigma_\pi)_g(w(\xi)) = w(\pi(g)\xi)$, for all $g \in \Gamma, \xi \in H$.

As explained in \cite{Fu07} or \cite{PS10}, Gausssian actions are $s$-malleable in the sense of Popa \cite{Po08}: the rotation operators $\theta_t$, $t \in \R$ on $H \oplus H$ and the symmetry $\rho$ defined by \[\theta_t = \begin{pmatrix} \cos(\pi t/2) & -\sin(\pi t /2) \\ \sin(\pi t /2) & \cos(\pi t/2) \end{pmatrix} \text{ and } \rho = \begin{pmatrix} 1&0\\0&-1 \end{pmatrix}\]
give rise to a one-parameter group of automorphisms $\alpha_t$ and an automorphism $\beta$ of $A \ootimes A$, which commute with the diagonal action of $\Gamma$, and satisfy $\beta \circ \alpha_t = \alpha_{-t} \circ \beta$.

Now consider the von Neumann algebras $M = A \rtimes \Gamma$ and $\tilde{M} = (A \ootimes A) \rtimes_{\sigma \otimes \sigma} \Gamma$. View $M$ as a subalgebra of $\tilde{M}$ using the identification $M \simeq (A \ootimes 1)\rtimes \Gamma$.
The automorphisms defined above then extend to automorphisms of $\tilde{M}$ still denoted $(\alpha_t)$ and $\beta$, in such a way that $\alpha_t(u_g) = \beta(u_g) = u_g$, for all $g \in \Gamma$.

$(\alpha_t)$ is then easily seen to be an $s$-malleable deformation of the action, so it satisfies the so-called transversality property.

\begin{lem}[\cite{Po08}, Lemma 2.1]
\label{transversality}
For any $x \in M$ and $t \in \R$ one has $$\Vert x - \alpha_{2t}(x) \Vert_2 \leq 2\Vert \alpha_t(x) - E_M \circ \alpha_t(x)\Vert_2.$$
\end{lem}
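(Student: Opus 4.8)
The plan is to reduce the inequality to a statement about the symmetry $\beta$, whose two defining features -- it fixes $M$ pointwise and it intertwines the deformation via $\beta\circ\alpha_t = \alpha_{-t}\circ\beta$ -- are exactly what force transversality. First I would apply the isometry $\alpha_{-t}$ to the left-hand side: since $\alpha_{-t}$ is trace-preserving, hence a $\Vert\cdot\Vert_2$-isometry, one has $\Vert x - \alpha_{2t}(x)\Vert_2 = \Vert \alpha_{-t}(x) - \alpha_t(x)\Vert_2$. Now for $x \in M$ the relations $\beta(x) = x$ and $\beta\circ\alpha_t = \alpha_{-t}\circ\beta$ give $\beta(\alpha_t(x)) = \alpha_{-t}(\beta(x)) = \alpha_{-t}(x)$, so the left-hand side is rewritten as $\Vert \beta(\alpha_t(x)) - \alpha_t(x)\Vert_2$. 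This is the conceptual heart of the argument: the quantity we want to control is the displacement of $\alpha_t(x)$ under $\beta$.

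Next I would set $y = \alpha_t(x)$ and use the orthogonal decomposition $y = E_M(y) + \xi$, where $\xi = y - E_M(y)$ satisfies $E_M(\xi) = 0$. Because $\beta$ is a trace-preserving automorphism that globally preserves $M$, it commutes with the conditional expectation $E_M$; in particular, since $\beta$ fixes $M$ pointwise, $\beta(E_M(y)) = E_M(y)$. Therefore the $M$-component cancels and $\beta(y) - y = \beta(\xi) - \xi$.

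Finally, the triangle inequality together with the fact that $\beta$ is a $\Vert\cdot\Vert_2$-isometry yields $\Vert \beta(y) - y\Vert_2 = \Vert \beta(\xi) - \xi\Vert_2 \leq \Vert\beta(\xi)\Vert_2 + \Vert\xi\Vert_2 = 2\Vert\xi\Vert_2 = 2\Vert \alpha_t(x) - E_M(\alpha_t(x))\Vert_2$, which is precisely the claimed bound. The main (and only) subtlety is the commutation of $\beta$ with $E_M$: this is where the full $s$-malleability data is genuinely used, and it is what makes the $M$-component drop out cleanly, leaving only the harmless factor of $2$ coming from the triangle inequality. Everything else is a routine manipulation of the isometries $\alpha_{\pm t}$ and $\beta$.
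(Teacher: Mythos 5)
Your argument is correct and is essentially Popa's original proof of \cite[Lemma 2.1]{Po08}, which this paper cites without reproducing: rewrite $\Vert x-\alpha_{2t}(x)\Vert_2$ as $\Vert \alpha_t(x)-\beta(\alpha_t(x))\Vert_2$ using that $\beta$ fixes $M$ pointwise and anticommutes with the deformation, then note that the $E_M$-component is $\beta$-invariant so only the orthogonal part contributes, giving the factor $2$. No gaps.
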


With more conditions on the representation $\pi$, we also get the spectral gap property.

\begin{lem}[Spectral gap, \cite{Bo12}]
\label{spectralgap}
Assume that the representation $\pi$ is such that $\pi^{\otimes l}$ is weakly contained in the regular representation for some $l \geq 1$. Let $\omega \in \beta \N \setminus \N$ be a free ultrafilter on $\N$. 

Then for every subalgebra $Q \subset M$ with no amenable direct summand, one has $Q' \cap \tilde{M}^\omega \subset M^\omega$
\end{lem}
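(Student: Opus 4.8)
The plan is to run Popa's spectral gap argument for the $s$-malleable deformation $(\alpha_t)$, the only input being a weak-containment property of the deformation bimodule that is forced by the hypothesis $\pi^{\otimes l}\prec\lambda_\Gamma$. Reformulating the conclusion through the ultrapower, $Q'\cap\tilde M^\omega\subset M^\omega$ \emph{fails} exactly when there is a bounded sequence $(x_n)$ in $\tilde M$ with $E_M(x_n)=0$, $\Vert x_n\Vert_2=1$, and $\Vert x_nq-qx_n\Vert_2\to 0$ for every $q\in Q$. Such a sequence defines a nonzero unit vector $\xi=(x_n)_\omega$ in the ultrapower of the $M$--$M$ bimodule $\mathcal H:=L^2(\tilde M)\ominus L^2(M)$ which is \emph{central for $Q$}, i.e.\ $q\xi=\xi q$ for all $q\in Q$. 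First I would therefore reduce the lemma to showing that $\mathcal H$ admits no nonzero $Q$-central vector in its ultrapower, and then derive a contradiction from the assumption that $Q$ has no amenable direct summand.

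Next I would analyze $\mathcal H$ through the Fock decomposition recalled in Section~\ref{gaussianactions}. Writing $L^2(A)=\bigoplus_{n\ge0}H_\C^{\odot n}$, with the Koopman representation acting as $\bigoplus_n\pi_\C^{\odot n}$, the complement $L^2(\tilde M)\ominus L^2(M)$ is graded by the degree in the second Gaussian coordinate,
\[
\mathcal H=\bigoplus_{n\ge1}\mathcal H_n,\qquad \mathcal H_n\cong L^2(A)\otimes H_\C^{\odot n}\otimes\ell^2(\Gamma),
\]
each $\mathcal H_n$ being an $M$--$M$ subbimodule. The key point is that each $\mathcal H_n$ is weakly contained in the coarse $M$--$M$ bimodule $L^2(M)\otimes L^2(M)$; granting this, $\mathcal H$ itself is weakly contained in a multiple of the coarse bimodule. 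For the high-degree terms this is immediate from the hypothesis: since $\pi^{\otimes l}\prec\lambda_\Gamma$, tensoring gives $\pi_\C^{\otimes n}\prec\lambda_\Gamma$ for all $n\ge l$, hence $\pi_\C^{\odot n}\subset\pi_\C^{\otimes n}\prec\lambda_\Gamma$, which is precisely what makes the corresponding $\mathcal H_n$ coarse.

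The hard part will be the finitely many low-degree terms $1\le n<l$, where $\pi_\C^{\odot n}$ need not be weakly contained in the regular representation. Here I would exploit the presence of the full left Fock factor $L^2(A)=\bigoplus_m H_\C^{\odot m}$ inside $\mathcal H_n$: after transporting the bimodule structure to the diagonal $\Gamma$-action, the governing representation is a tensor product involving arbitrarily high symmetric powers, and the absorbing behaviour of weak containment in $\lambda_\Gamma$ under tensoring (together with the weak mixing of $\pi$ entailed by $\pi^{\otimes l}\prec\lambda_\Gamma$) pushes these terms back into the coarse regime. This computation, which is the genuine content of the lemma and the place where the Bernoulli ``cylinder'' structure is replaced by the Fock structure, is where I expect the real work to lie; it is carried out in \cite{Bo12}.

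Finally I would conclude by Popa's spectral gap principle. Once $\mathcal H$ is weakly contained in the coarse bimodule, a nonzero $Q$-central vector $\xi\in\mathcal H^\omega$ would produce, via Connes' characterization of amenability applied to the diagonal $Q$-action, a nonzero central projection $z\in\mathcal Z(Q)$ with $Qz$ amenable, contradicting the hypothesis that $Q$ has no amenable direct summand. Hence $\xi=0$, no sequence $(x_n)$ as above can exist, and therefore $Q'\cap\tilde M^\omega\subset M^\omega$.
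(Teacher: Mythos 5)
Your reduction to a bimodule statement and your handling of the high-degree part are fine: each $\mathcal H_n$ can be rewritten as $H_\C^{\odot n}\otimes L^2(M)$ with left action $au_g\mapsto \pi_\C^{\odot n}(g)\otimes (au_g\,\cdot)$ and right action by right multiplication, and for $n\ge l$ one indeed gets weak containment in $\mathcal H(\lambda_\Gamma)\cong L^2(M)\otimes_A L^2(M)=L^2\langle M,e_A\rangle$, hence in the coarse bimodule because $A$ is amenable. The genuine gap is your key claim that the low-degree pieces, and hence all of $\mathcal H$, are weakly contained in the coarse bimodule: this is \emph{false} precisely in the cases the lemma is meant for. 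Restrict the $M$-$M$ structure of $\mathcal H_n=H_\C^{\odot n}\otimes L^2(A)\otimes\ell^2(\Gamma)$ to the unitaries $(u_g,u_k)$: one gets the $\Gamma\times\Gamma$-representation $(g,k)\mapsto \pi_\C^{\odot n}(g)\otimes\kappa_g\otimes\lambda_g\rho_k$, where $\kappa$ is the full Koopman representation on $L^2(A)$ and $\rho$ is the right regular representation, whereas the coarse bimodule restricts to a multiple of $\lambda_{\Gamma\times\Gamma}$ by Fell absorption. The subspace $H_\C^{\odot n}\otimes\C 1\otimes\C\delta_e$ is invariant under the diagonal subgroup $\lbrace (g,g)\rbrace$ (the vacuum of the left Fock leg and $\delta_e$ are fixed), and the diagonal acts on it by $\pi_\C^{\odot n}$; since $\lambda_{\Gamma\times\Gamma}$ restricted to the diagonal is a multiple of $\lambda_\Gamma$, weak containment of $\mathcal H_n$ in the coarse bimodule would force $\pi^{\odot n}\prec\lambda_\Gamma$, in particular $\pi\prec\lambda_\Gamma$ for $n=1$. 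The hypothesis only gives $\pi^{\otimes l}\prec\lambda_\Gamma$, and in the intended applications (Section 4 of this paper) $\pi$ itself is explicitly \emph{not} weakly contained in $\lambda_\Gamma$. So the ``absorption'' you invoke cannot rescue the terms $1\le n<l$: the left Fock factor contains the constants, and tensoring against a representation containing the trivial one improves nothing.

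Consequently your final paragraph (coarse containment plus Connes' hypertrace argument) cannot be applied to $\mathcal H$ itself, and the hypothesis $\pi^{\otimes l}\prec\lambda_\Gamma$ must enter differently for the low-degree part. One workable route, in the spirit of \cite{Po08,OP07,Bo12}, is to note that these representation-type bimodules multiply under Connes fusion, $\mathcal H(\eta)\otimes_M\mathcal H(\eta')\cong\mathcal H(\eta\otimes\eta')$, so that the $l$-fold fusion of each piece is governed by a representation weakly contained in $\lambda_\Gamma$ and is therefore weakly contained in $L^2\langle M,e_A\rangle$, hence coarse; one then tensors up the almost $Q$-central vectors $x_n-E_M(x_n)$ (their uniform $\Vert\cdot\Vert_\infty$-bounds are needed both to form the fused vectors and to keep them non-degenerate) and runs Popa's spectral gap argument there. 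Since the present paper only quotes the lemma from \cite{Bo12}, I cannot compare your write-up line by line with the source; but the step you defer to \cite{Bo12} is not a computation pushing the low terms ``into the coarse regime'' --- that statement is false --- and any correct proof has to replace it by an argument of the above tensor-power type or an equivalent genuine use of the hypothesis.
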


In fact this lemma admits a relative version.

Recall from \cite{OP07} that if $(M,\tau)$ is a finite von Neumann algebra, $p \in M$ a projection, and $Q \subset M$ and $P \subset pMp$ are subalgebras, one says that $P$ is {\it amenable relative to} $Q$ inside $M$ if there exists a $P$-central state $\varphi$ on $p\langle M,e_Q \rangle p$ such that $\varphi(pxp) = \tau(pxp)/\tau(p)$ for any $x \in M$. Here $\langle M,e_Q \rangle$ denotes Jones' basic construction associated with the inclusion $Q \subset M$.
Following \cite[Section 2.4]{IPV11}, $P$ is said to be {\it strongly non-amenable relative to} $Q$ if for all non-zero projection $p_1 \in P' \cap pMp$, $Pp_1$ is not amenable relative to $Q$.

\begin{lem}
\label{gspectralgap}
Assume that the representation $\pi$ is such that $\pi^{\otimes l}$ is weakly contained in the regular representation for some $l \geq 1$. Let $\omega \in \beta \N \setminus \N$ be a free ultrafilter on $\N$. 

Then for any subalgebra $Q \subset M \ootimes N$ which is strongly non-amenable relative to $1 \otimes N$, one has $Q' \cap (\tilde{M} \ootimes N)^\omega \subset Q' \cap (M\ootimes N)^\omega$.
\end{lem}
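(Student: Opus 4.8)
The plan is to reduce the statement to a uniform spectral-gap estimate on the malleable deformation and then to conclude by a transversality argument; the whole proof parallels that of Lemma~\ref{spectralgap} in \cite{Bo12}, with $1 \otimes N$ now playing the role previously played by $\C$ and relative amenability replacing amenability. Write $\tilde\alpha_t := \alpha_t \otimes \id_N$ for the extension of the deformation to $\tilde M \ootimes N$; it fixes $L\Gamma$ pointwise, commutes with the diagonal $\Gamma$-action, and (by Lemma~\ref{transversality} applied to $M \ootimes N$) satisfies the transversality inequality relative to $E := E_{M \ootimes N}$, all of which pass to the tracial ultrapower. Since $Q' \cap (M \ootimes N)^\omega \subset Q' \cap (\tilde M \ootimes N)^\omega$ automatically, it suffices to show that every unitary $x \in Q' \cap (\tilde M \ootimes N)^\omega$ (these span the relative commutant) lies in $(M \ootimes N)^\omega$.

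The core is to prove that $Q$ has spectral gap in the ``transverse'' bimodule $\mathcal H := (L^2(\tilde M) \ominus L^2(M)) \otimes L^2(N)$, viewed as an $(M \ootimes N)$-$(M \ootimes N)$-bimodule. Because $\pi^{\otimes l}$ is weakly contained in the regular representation, the bimodule computation behind Lemma~\ref{spectralgap} in \cite{Bo12}, tensored with $N$ on the right, shows that $\mathcal H$ is weakly contained in the basic-construction bimodule $L^2\langle M \ootimes N, e_{1 \otimes N}\rangle$. Granting this, there can be no sequence of unit vectors $\eta_k \in \mathcal H$ with $\Vert a\eta_k - \eta_k a \Vert_2 \to 0$ for every $a \in Q$: such almost-$Q$-central vectors would, via the weak containment and a weak-$*$ limit of the associated vector states, furnish a $Q$-central state on $\langle M \ootimes N, e_{1 \otimes N}\rangle$ that restricts to the trace on $M \ootimes N$, exhibiting a non-zero corner of $Q$ as amenable relative to $1 \otimes N$ --- contradicting strong non-amenability. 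Equivalently, there are a finite set $F \subset \mathcal U(Q)$ and $C > 0$ with $\Vert \zeta \Vert_2 \leq C \sum_{q \in F} \Vert q\zeta - \zeta q \Vert_2$ for all $\zeta \in \mathcal H$. \textbf{This is the step I expect to be the main obstacle}, as it is where the weak-containment hypothesis on $\pi$ is turned into a genuine spectral gap, and where ``strong'' (rather than plain) non-amenability is essential to avoid cutting by central projections.

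From the spectral gap I would first deduce, by contraposition, the uniform convergence
\[ \varepsilon(t) := \sup_{q \in \mathcal U(Q)} \Vert \tilde\alpha_t(q) - q \Vert_2 \longrightarrow 0 \quad (t \to 0). \]
Indeed, were this to fail, there would exist $\delta > 0$, $t_k \to 0$ and unitaries $q_k \in \mathcal U(Q)$ with $\Vert \tilde\alpha_{t_k}(q_k) - q_k \Vert_2 \geq \delta$; transversality then forces $\delta/2 \leq \Vert \eta_k \Vert_2 \leq 1$ for $\eta_k := \tilde\alpha_{t_k/2}(q_k) - E(\tilde\alpha_{t_k/2}(q_k)) \in \mathcal H$, while for each fixed $a \in Q$ the identity $a\,\tilde\alpha_{t_k/2}(q_k) - \tilde\alpha_{t_k/2}(q_k)\,a = \tilde\alpha_{t_k/2}\!\big(\tilde\alpha_{-t_k/2}(a)\,q_k - q_k\,\tilde\alpha_{-t_k/2}(a)\big)$ together with $[a,q_k]=0$ gives $\Vert a\eta_k - \eta_k a \Vert_2 \to 0$, so the normalized $\eta_k$ would be almost-$Q$-central, a contradiction. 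Now fix a unitary $x \in Q' \cap (\tilde M \ootimes N)^\omega$ and set $z_t := \tilde\alpha_t^\omega(x) - E^\omega(\tilde\alpha_t^\omega(x))$, where $E^\omega := E_{(M \ootimes N)^\omega}$; note $z_t \in \mathcal H^\omega$. Since $x$ commutes with $Q$ and $E^\omega$ is $Q$-bimodular, the identity $\tilde\alpha_t^\omega(x)q - q\tilde\alpha_t^\omega(x) = \tilde\alpha_t^\omega\big([x,\tilde\alpha_{-t}^\omega(q) - q]\big)$ and $(\star)$ give $\Vert [z_t,q] \Vert_2 \leq 4\varepsilon(t)$ for all $q \in \mathcal U(Q)$ (using that $\tilde\alpha_t$ is trace-preserving, so $\Vert \tilde\alpha_{-t}(q) - q\Vert_2 = \Vert q - \tilde\alpha_t(q)\Vert_2 \leq \varepsilon(t)$); the uniform inequality applied in $\mathcal H^\omega$ then yields $\Vert z_t \Vert_2 \leq \delta(t)$ with $\delta(t) := 4C|F|\,\varepsilon(t) \to 0$.

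Finally I would upgrade ``$\tilde\alpha_t^\omega(x)$ is close to $(M \ootimes N)^\omega$'' into ``$x$ lies in $(M \ootimes N)^\omega$'', the delicate point being that $\tilde\alpha_t^\omega$ need not converge to the identity on the whole deformed ultrapower, so transversality must be applied only to the honest approximant $m_t := E^\omega(\tilde\alpha_t^\omega(x)) \in (M \ootimes N)^\omega$. By isometry of $\tilde\alpha_{-t}^\omega$ one has $\Vert x - \tilde\alpha_{-t}^\omega(m_t)\Vert_2 = \Vert z_t \Vert_2 \leq \delta(t)$. A direct computation writing $\tilde\alpha_s^\omega(m_t) = \tilde\alpha_{s+t}^\omega(x) - \tilde\alpha_s^\omega(z_t)$ gives $\Vert \tilde\alpha_s^\omega(m_t) - E^\omega(\tilde\alpha_s^\omega(m_t))\Vert_2 \leq \delta(s+t) + \delta(t)$, so transversality applied to $m_t$ with $s = -t/2$ yields $\Vert m_t - \tilde\alpha_{-t}^\omega(m_t)\Vert_2 \leq 2\big(\delta(t/2) + \delta(t)\big)$. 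Hence, for every $t$, $\Vert x - E^\omega(x)\Vert_2 \leq \Vert x - m_t \Vert_2 \leq 3\delta(t) + 2\delta(t/2)$, and the right-hand side tends to $0$ as $t \to 0$. Therefore $\Vert x - E^\omega(x)\Vert_2 = 0$, i.e.\ $x \in (M \ootimes N)^\omega$; as $x \in Q'$, this proves $Q' \cap (\tilde M \ootimes N)^\omega \subset Q' \cap (M \ootimes N)^\omega$.
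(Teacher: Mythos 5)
The paper states Lemma \ref{gspectralgap} without proof, as the ``relative version'' of Lemma \ref{spectralgap} quoted from \cite{Bo12}, so there is no in-text argument to compare against. Your overall architecture is the expected one: (a) a spectral gap for $Q$ in the transverse bimodule $\mathcal H=(L^2(\tilde M)\ominus L^2(M))\otimes L^2(N)$ deduced from strong non-amenability relative to $1\otimes N$, (b) uniform convergence of $\alpha_t\otimes\id$ on $\mathcal U(Q)$ by Popa's argument from the proof of \cite[Lemma 5.2]{Po08}, and (c) the transversality inequality of Lemma \ref{transversality} in the ultrapower. Parts (b) and (c) of your write-up check out as written: the commutator estimates, the passage of the gap inequality to $\mathcal H^\omega$, the role of ``strong'' relative non-amenability in ruling out a relatively amenable corner, and the final $3\delta(t)+2\delta(t/2)$ bound are all correct.

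However, the step you flag as the crux is not merely the hardest step: as you state it, it is false. The claim that $\mathcal H$ itself is weakly contained in $L^2\langle M\ootimes N,e_{1\otimes N}\rangle$ amounts to $L^2(\tilde M)\ominus L^2(M)\prec L^2(M)\otimes L^2(M)$ as $M$-$M$-bimodules. Restricting to $L\Gamma$-$L\Gamma$, the bimodule $L^2(\tilde M)\ominus L^2(M)=L^2(A)\otimes(L^2(A)\ominus\C)\otimes\ell^2(\Gamma)$ is the one induced from the representation $\sigma_{|L^2(A)}\otimes\sigma_{|L^2(A)\ominus\C}$, which contains $\pi$ as a subrepresentation; weak containment in the coarse bimodule would therefore force $\pi\prec\lambda_\Gamma$. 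When $l>1$ and $\pi\not\prec\lambda_\Gamma$ --- exactly the situation exploited in Section 4 for $\PSL(n,\Z)$ --- your claimed containment fails. The actual content of \cite{Bo12} (following \cite{PS10}) is that the $l$-fold Connes tensor power $\mathcal H^{\otimes_{M\ootimes N}l}$ is weakly contained in the basic construction bimodule, and one must transport the almost-$Q$-central vectors $\eta_k$ into that tensor power before extracting the $Q$-central state. This is where the uniform bound $\Vert\eta_k\Vert_\infty\le 2$ (the $\eta_k$ arise from unitaries moved by a trace-preserving deformation) is genuinely needed, so that $\eta_k\otimes_{M\ootimes N}\cdots\otimes_{M\ootimes N}\eta_k$ is well defined, remains almost central, and does not collapse in norm. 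Without this detour your argument establishes the lemma only in the case $l=1$.
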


\subsection{Deformation/rigidity results for Gaussian actions}

We mention here different versions of statements that we proved in \cite{Bo12} using deformation/rigidity arguments.

The following result is a variation of \cite[Theorem 3.4]{Bo12}, with a formulation closer to \cite[Theorem 4.2]{IPV11}.

\begin{thm}
\label{adapted IPV}
Assume that $\Gamma \curvearrowright A$ is the Gaussian action associated with a mixing representation of an ICC group $\Gamma$. Let $N$ be a II$_1$ factor. Put $M = A \rtimes \Gamma$ and define $(\alpha_t)$ as in section \ref{gaussianactions}.

Let $p \in M \ootimes N$ and $Q \subset p(M \ootimes N)p$ be a subalgebra such that there exist $t_0 = 1/2^n$, $z \in M \ootimes N$ and $c > 0$ satisfying
\[\vert \tau((\alpha_{t_0} \otimes \id)(u^*)zu)\vert \geq c, \; \text{for all } u \in \mathcal U(Q).\]
Put $P = \mathcal{QN}_{p(M \ootimes N)p}(Q)''$. Then at least one of the following assertions holds.
\begin{enumerate}
\item $Q \prec 1 \otimes N$;
\item $P \prec A \ootimes N$;
\item There exists a non-zero partial isometry $v \in pM$ such that $vv^* \in  \mathcal Z(P)$ and $v^*Pv \subset L\Gamma \ootimes N$.
\end{enumerate}
\end{thm}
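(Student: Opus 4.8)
The plan is to exploit the $s$-malleable deformation $(\alpha_t)$ against the intertwining hypothesis, following the strategy of \cite[Theorem 4.2]{IPV11} adapted to the Gaussian setting via the technology of \cite{Bo12}. The starting point is the quantitative non-triviality condition $|\tau((\alpha_{t_0}\otimes\id)(u^*)zu)|\geq c$ for all $u\in\mathcal U(Q)$. First I would reformulate this: setting $\Phi_{t}(x)=E_{M\ootimes N}\circ(\alpha_t\otimes\id)(x)$, the inequality says that the deformation $(\alpha_{t_0}\otimes\id)$ does not uniformly move the unitary group of $Q$ away from $M\ootimes N$; more precisely it gives a uniform lower bound on $\inf_{u}\|(\alpha_{t_0}\otimes\id)(u)-\Phi_{t_0}((\alpha_{t_0}\otimes\id)(u))\|_2$ failing, i.e.\ a uniform bound on how far $(\alpha_{t_0}\otimes\id)(u)$ sits from $M\ootimes N$. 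This is exactly the hypothesis under which Popa's transversality Lemma \ref{transversality} can be leveraged, since $t_0=1/2^n$ lets one bootstrap control at scale $t_0$ up to scale $1$ by iterating the transversality estimate $n$ times.

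Next I would pass to an ultrafilter. Fix $\omega\in\beta\N\setminus\N$ and consider the asymptotic behaviour of $(\alpha_{1/2^k}\otimes\id)$ as $k\to\infty$. The uniform smallness (coming from the lower bound on the inner product) of the ``moved'' part of $\mathcal U(Q)$ should produce, in the ultrapower, a nonzero $Q$-central vector, or more precisely an element witnessing that $Q'\cap(\tilde M\ootimes N)^\omega$ is not contained in $(M\ootimes N)^\omega$. At this juncture the dichotomy of Lemma \ref{gspectralgap} becomes the engine of the proof: if $Q$ were strongly non-amenable relative to $1\otimes N$, then $Q'\cap(\tilde M\ootimes N)^\omega\subset Q'\cap(M\ootimes N)^\omega$, which would contradict the existence of that genuinely ``deformed'' central element. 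Hence $Q$ fails to be strongly non-amenable relative to $1\otimes N$, so there is a nonzero projection $p_1\in Q'\cap p(M\ootimes N)p$ with $Qp_1$ amenable relative to $1\otimes N$. One then feeds this relative amenability into the structural/dichotomy results of \cite{Bo12} (the Gaussian analogues of the Ioana--Popa--Vaes machinery): relative amenability over $1\otimes N$ inside $M\ootimes N=(A\rtimes\Gamma)\ootimes N$, together with the ICC and mixing hypotheses on $\Gamma\curvearrowright A$, forces one of the three alternatives. Concretely, either $Q$ itself already sits over $1\otimes N$ (alternative (1)), or the relative amenability of a corner propagates through the quasi-normalizer $P=\mathcal{QN}(Q)''$ to give intertwining of $P$ into $A\ootimes N$ (alternative (2)) or into $L\Gamma\ootimes N$.

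To conclude in the last case I would upgrade the intertwining $P\prec L\Gamma\ootimes N$ to the clean conjugacy statement of alternative (3). This is precisely where Proposition \ref{embedgroup} is used: assuming $Q\nprec 1\otimes N$ (so that (1) is excluded), the mixing and freeness of the ICC action $\Gamma\curvearrowright A$ guarantee the hypotheses of part (1) of that proposition, yielding a nonzero partial isometry $v\in pM$ with $vv^*\in\mathcal Z(P)$ and $v^*Pv\subset L\Gamma\ootimes N$. The main obstacle, in my view, is the first transition: extracting from the single-scale inner-product bound at $t_0=1/2^n$ a robust statement about the ultrapower relative commutant, i.e.\ correctly packaging the transversality iteration so that the uniform lower bound $c$ survives to produce a nonzero element of $Q'\cap(\tilde M\ootimes N)^\omega$ outside $(M\ootimes N)^\omega$. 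Everything downstream is then a matter of invoking Lemma \ref{gspectralgap}, the relative-amenability dichotomy of \cite{Bo12}, and Proposition \ref{embedgroup}, but getting the deformation estimate into the exact form that the spectral-gap lemma consumes is the delicate point that must be handled with care.
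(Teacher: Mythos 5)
The endgame of your proposal matches the paper: once one knows $Q\prec L\Gamma\ootimes N$ and $Q\nprec 1\otimes N$, Proposition \ref{embedgroup}(1) gives alternative (3). But the core of your argument is not how the theorem is proved, and as written it does not work. The paper's proof is Popa's malleability/intertwining argument, imported from \cite[Theorem 3.4]{Bo12} (itself modelled on \cite[Theorem 4.2]{IPV11}): from the uniform lower bound $\vert\tau((\alpha_{t_0}\otimes\id)(u^*)zu)\vert\geq c$ one extracts, by taking the element of minimal $\Vert\cdot\Vert_2$ in the weak closure of $\conv\lbrace(\alpha_{t_0}\otimes\id)(u^*)zu : u\in\mathcal U(Q)\rbrace$, a \emph{nonzero intertwiner} between $Q$ and $(\alpha_{t_0}\otimes\id)(Q)$ inside $\tilde M\ootimes N$; one then doubles the deformation parameter $n$ times using the symmetry $\beta$ and $s$-malleability, and at each doubling step the obstruction to extending the intertwiner is controlled by the quasi-normalizer $P$ together with the mixing of $\pi$ --- this is exactly where the alternative $P\prec A\ootimes N$ is produced. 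If that obstruction never occurs one reaches $t=1$ and concludes $Q\prec L\Gamma\ootimes N$.

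Your middle section runs the logic backwards. The hypothesis is a \emph{rigidity} statement about $Q$ (the deformation at scale $t_0$ correlates uniformly with $\mathcal U(Q)$); it does not produce an element of $Q'\cap(\tilde M\ootimes N)^\omega$ lying outside $(M\ootimes N)^\omega$, so there is no contradiction with strong non-amenability to be extracted, and hence no relative amenability to feed forward. Lemma \ref{gspectralgap} is in fact used in the \emph{opposite} direction, in the proof of Corollary \ref{corIPV}, where strong non-amenability of $Q'\cap p(M\ootimes N)p$ relative to $1\otimes N$ (together with Lemma \ref{transversality}) is what \emph{establishes} the hypothesis of the present theorem; the theorem itself assumes no spectral gap, no property (T), and no amenability condition. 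Moreover, even if you could conclude that some corner of $Q$ is amenable relative to $1\otimes N$, that yields none of the three alternatives: relative amenability over $1\otimes N$ does not imply intertwining into $1\otimes N$, $A\ootimes N$ or $L\Gamma\ootimes N$, and the step where this ``propagates through the quasi-normalizer'' is asserted without any mechanism. The missing idea is precisely Popa's convexity-plus-malleability intertwining argument, with mixing used to control $\mathcal{QN}(Q)''$ at each doubling step.
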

\begin{proof}
Assume that (2) is not satisfied. Using the fact that $\pi$ is mixing, the same proof that the one of \cite[Theorem 3.4]{Bo12} gives that $Q \prec L\Gamma \ootimes N$. Now if $Q \nprec 1 \otimes N$, Proposition \ref{embedgroup}(1) implies that (3) holds true.
\end{proof}

Now one can deduce the Gaussian version of \cite[Corollary 4.3]{IPV11}, which implies Step (1) of the proof of Theorems A and B.

\begin{cor}
\label{corIPV}
Assume that $\Gamma$ is ICC and let $\Gamma \curvearrowright A$ be a mixing Gaussian action. Put $M = A \rtimes \Gamma$. Let $N$ be a II$_1$ factor and $Q \subset p(M \ootimes N)p$ a subalgebra, for some $p \in M \ootimes N$. Assume that we are in one of the following situations:
\begin{itemize}
\item $Q \subset p(M \ootimes N)p$ has the relative property (T);
\item $Q' \cap p(M \ootimes N)p$ is strongly non-amenable relative to $1 \otimes N$ (see end of Section 2.2), and some tensor power of $\pi$ is weakly contained in the regular representation of $\Gamma$.
\end{itemize}
Denote by $P = \mathcal{QN}_{p(M \ootimes N)p}(Q)''$. Then one of the following assertions is true.
\begin{enumerate}
\item $Q \prec 1 \otimes N$;
\item $P \prec A \ootimes N$;
\item There exists a unitary $v \in M \ootimes N$ such that $v^*Pv \subset L\Gamma \ootimes N$.
\end{enumerate}
\end{cor}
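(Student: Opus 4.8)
The plan is to reduce Corollary \ref{corIPV} to Theorem \ref{adapted IPV} by verifying, in each of the two hypothesis cases, that the deformation $(\alpha_t)$ cannot move the unitaries of $Q$ too far, so that the transversality-type inequality feeding Theorem \ref{adapted IPV} holds at some dyadic $t_0 = 1/2^n$. The only difference between the conclusions of the two theorems is in the third alternative: Theorem \ref{adapted IPV} produces a partial isometry $v \in pM$ with $vv^* \in \mathcal Z(P)$ and $v^*Pv \subset L\Gamma \ootimes N$, whereas here I want a genuine unitary $v \in M \ootimes N$ with $v^*Pv \subset L\Gamma \ootimes N$. So once the quantitative hypothesis is in place, the work is to upgrade that local conjugacy into a global one.

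First I would treat the two cases separately to obtain the estimate $\vert \tau((\alpha_{t_0} \otimes \id)(u^*)zu)\vert \geq c$ for all $u \in \mathcal U(Q)$. In the relative property (T) case, the pair $Q \subset p(M\ootimes N)p$ has a rigidity constant, and since $(\alpha_t \otimes \id)$ is a continuous one-parameter group of automorphisms converging to the identity as $t \to 0$, property (T) yields a $t_0$ small enough (which I may take dyadic) for which $\alpha_{t_0}\otimes \id$ stays uniformly close to the identity on $\mathcal U(Q)$; taking $z$ to be the appropriate element, the inner-product bound follows from the standard fact that relative (T) forces uniform proximity of the deformation on the unit ball of $Q$. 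In the product-group/weak-containment case, I would instead invoke the relative spectral gap, Lemma \ref{gspectralgap}: since $Q' \cap p(M\ootimes N)p$ is strongly non-amenable relative to $1\otimes N$, an ultrafilter/spectral-gap argument shows that the deformation cannot asymptotically displace $Q$ inside the ultrapower, which again furnishes a dyadic $t_0$ and a uniform lower bound $c > 0$ of the required form. In both cases this is exactly the quantitative input Theorem \ref{adapted IPV} consumes.

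With the hypothesis of Theorem \ref{adapted IPV} verified, I apply it to get one of its three alternatives. If alternative (1) or (2) holds we are immediately in case (1) or (2) of the corollary, so assume neither holds and that Theorem \ref{adapted IPV}(3) gives a partial isometry $v \in pM$ with $vv^* \in \mathcal Z(P)$ and $v^*Pv \subset L\Gamma \ootimes N$. The remaining task is to promote this to a full unitary conjugacy, and here Proposition \ref{embedgroup}(2) is the natural tool: provided I can show that $rQ \prec_{M\ootimes N} L\Gamma \ootimes N$ for every $r \in Q' \cap p(M\ootimes N)p$ (together with $Q \nprec 1\otimes N$, which is exactly the negation of alternative (1)), Proposition \ref{embedgroup}(2) yields a unitary $u \in \mathcal U(M\ootimes N)$ with $uPu^* \subset L\Gamma \ootimes N$, i.e.\ alternative (3) of the corollary with $v = u^*$. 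To obtain the hypothesis ``$rQ \prec L\Gamma \ootimes N$ for all $r$'' I would run the same argument with $Q$ replaced by $rQ$: each corner $rQ$ still inherits the relevant rigidity or spectral-gap property from $Q$, so the quantitative estimate persists and Theorem \ref{adapted IPV} applies to $rQ$ as well, and since alternatives (1) and (2) were excluded globally one lands in the embedding into $L\Gamma \ootimes N$.

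The main obstacle I expect is this last upgrade step—passing from the cut-down conclusion of Theorem \ref{adapted IPV}(3) to a single global unitary. The delicate point is checking that the rigidity/spectral-gap hypothesis is genuinely stable under cutting $Q$ by projections $r \in Q' \cap p(M\ootimes N)p$, so that Theorem \ref{adapted IPV} can be reapplied uniformly to all corners and Proposition \ref{embedgroup}(2) becomes available. Property (T) of a corner and strong relative non-amenability of the relative commutant both behave well under such cutting, but this needs to be verified carefully, as it is precisely what separates the clean conclusion of the corollary from the merely local conclusion of Theorem \ref{adapted IPV}.
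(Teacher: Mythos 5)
Your proposal is correct and follows essentially the same route as the paper: establish uniform $\Vert\cdot\Vert_2$-convergence of $\alpha_t\otimes\id$ on the unit ball of $Q$ (from relative property (T) in the first case, from Lemma \ref{gspectralgap} plus transversality in the second), observe that this hypothesis passes to every corner $rQ$ with $r\in Q'\cap p(M\ootimes N)p$ so that Theorem \ref{adapted IPV} applies to each of them, and then invoke Proposition \ref{embedgroup}(2) to convert the family of local embeddings $rQ\prec L\Gamma\ootimes N$ into a single global unitary conjugacy. The ``delicate point'' you flag at the end is exactly how the paper handles it: the stability under cutting is automatic once the deformation estimate is phrased as uniform convergence on $(Q)_1$.
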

\begin{proof}
The assumptions imply that the deformation $\alpha_t \otimes id$ converges to the identity uniformly on $(Q)_1 = \lbrace x \in Q, \Vert x \Vert \leq 1 \rbrace$ in $\Vert \cdot \Vert_2$. Indeed, if  $Q \subset p(M \ootimes N)p$ has relative property (T), this is almost by definition. If $Q' \cap p(M \ootimes N)p$ is strongly non-amenable relative to $1 \otimes N$, then this is a consequence of spectral gap lemma \ref{gspectralgap}, and transversality property \ref{transversality} (see the proof of \cite[Lemma 5.2]{Po08}).

Hence, for all $r \in Q' \cap pMp$, the subalgebra $rQ \subset rMr$ satisfies the assumpions of Theorem \ref{adapted IPV}. Then if $Q \nprec 1 \otimes N$ and $P \nprec A \ootimes N$, Theorem \ref{adapted IPV} applied to all such $rQ$'s implies in particular that for all $r \in Q' \cap pMp$, $rQ \prec L\Gamma \ootimes N$. Now (3) follows from Lemma \ref{embedgroup}(2).
\end{proof}

In \cite{Bo12}, we also obtained a localization result (Theorem 3.8) for subalgebras of $M$ that commute inside $M^\omega$ with rigid subalgebras of $M^\omega$, for some free ultrafilter $\omega$ on $\N$. In fact, the same proof leads to the following improvement. We include a sketch of the proof for convenience.

\begin{thm}
\label{ultraproduct}
Let $\Gamma \curvearrowright A$ be a mixing Gaussian action. Put $M = A \rtimes \Gamma$ and consider a II$_1$ factor $N$. Assume that $(v_n)$ is a bounded sequence of elements in $M \ootimes N$ such that $\alpha_t \otimes \id$ converges to the identity uniformly on the set $\lbrace v_n, n \in \N \rbrace$. Choose a free ultrafilter $\omega$ on $\N$, and denote by $D \subset M \ootimes N \subset (M \ootimes N)^\omega$ the subalgebra of elements that commute with the element $(v_n)_n \in (M \ootimes N)^\omega$. Put $P = \mathcal{QN}_{M \ootimes N}(D)''$.

Then one of the following is true.
\begin{enumerate}
\item $(v_n)_n \in (A \ootimes N)^\omega \rtimes \Gamma$;
\item $D \prec L\Gamma \ootimes N$;
\item $P \prec_M A \ootimes N$.
\end{enumerate}
\end{thm}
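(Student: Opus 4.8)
The plan is to feed the sequence $(v_n)_n$ into the deformation/rigidity machinery of Theorem \ref{adapted IPV}, with the algebra $Q$ taken to be $D$. The key observation is that the commutation condition, combined with the transversality property, forces a uniform lower bound on the inner product $|\tau((\alpha_{t_0}\otimes\id)(u^*)\,z\,u)|$ for a suitable $z$ built from the sequence $(v_n)_n$, which is exactly the hypothesis needed to invoke Theorem \ref{adapted IPV}. First I would fix a small dyadic $t_0 = 1/2^m$ and set $z := (v_n)_n \in (M\ootimes N)^\omega$, working in the ultrapower. Since $D$ by definition commutes with $(v_n)_n$, for any unitary $u \in \mathcal U(D)$ one has $u^*\,(v_n)_n\,u = (v_n)_n$. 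The point is to transport this to a statement about $(\alpha_{t_0}\otimes\id)(u^*)\,z\,u$: because $\alpha_t\otimes\id$ converges to the identity \emph{uniformly} on $\{v_n : n\in\N\}$, the deformed sequence $((\alpha_{t_0}\otimes\id)(v_n))_n$ is close to $(v_n)_n$ in $\|\cdot\|_2$, uniformly in $n$, and hence as elements of $(M\ootimes N)^\omega$ the deformed element stays within a fixed small distance of $z$. Tracking the inner product $\tau((\alpha_{t_0}\otimes\id)(u^*)\,z\,u)$ and using the $\Gamma$-equivariance $\alpha_{t_0}(u_g)=u_g$ together with the commutation $[u,z]=0$, I expect to extract the required constant $c>0$, provided $t_0$ is chosen small enough that the uniform deformation estimate beats the trace of the tail.

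Once the spectral-gap-type inequality holds, Theorem \ref{adapted IPV} (applied with $Q = D$, and to each cutdown $rD$ with $r \in D'\cap(M\ootimes N)$, exactly as in the proof of Corollary \ref{corIPV}) yields the trichotomy: either $D \prec 1\otimes N$, or $P = \mathcal{QN}_{M\ootimes N}(D)'' \prec A\ootimes N$, or a corner of $P$ embeds into $L\Gamma\ootimes N$ via a partial isometry. The third case, upgraded to a genuine unitary conjugation through Proposition \ref{embedgroup}(2), gives conclusion (2) after passing from $P$ back to $D$ (since $D \subset P$, an embedding of $P$ into $L\Gamma\ootimes N$ restricts to $D$). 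The second alternative is precisely conclusion (3). The remaining alternative $D\prec 1\otimes N$ is the one I must convert into conclusion (1), namely that $(v_n)_n$ actually lies in the ``smooth'' part $(A\ootimes N)^\omega\rtimes\Gamma$; this is where the mixing hypothesis on the Gaussian action enters.

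The main obstacle I anticipate is establishing conclusion (1) from the failure of (2) and (3), i.e.\ showing directly that if $D\nprec L\Gamma\ootimes N$ and $P\nprec A\ootimes N$, then the sequence $(v_n)_n$ has negligible Fourier mass outside $A\ootimes N$ with respect to the decomposition $M\ootimes N = (A\ootimes N)\rtimes\Gamma$. Here I would argue by contradiction: if $(v_n)_n \notin (A\ootimes N)^\omega\rtimes\Gamma$, then along $\omega$ there is a fixed nonzero amount of $\|\cdot\|_2$-mass spread over infinitely many group elements $g\in\Gamma$. Because $D$ commutes with $(v_n)_n$ and the action is mixing, the mixing (and, via Lemma \ref{transversality}, the $s$-malleable deformation) should force $D$ to be ``localized'' against these group translates, ultimately producing either an intertwining of $D$ into $L\Gamma\ootimes N$ or a breakdown of the uniform convergence of $\alpha_t\otimes\id$ on $\{v_n\}$ --- both contradictions. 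Making this localization argument precise, and in particular handling the interplay between the ultrafilter $\omega$ and the two competing intertwining alternatives, is the delicate point; I would lean on the same computation as in \cite[Theorem 3.8]{Bo12}, adapting it so that the conclusion is phrased as membership of $(v_n)_n$ in the smooth subalgebra rather than as a statement about a fixed rigid subalgebra. The rest --- the deductions of (2) and (3) from Theorem \ref{adapted IPV} and Proposition \ref{embedgroup} --- is routine bookkeeping.
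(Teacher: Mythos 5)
There is a genuine gap, and it sits exactly where the proof lives. Your overall architecture is right (run $Q=D$ through Theorem \ref{adapted IPV}; the endgame folding $D\prec 1\otimes N$ and the partial-isometry case into conclusion (2) is routine), and in your last paragraph you even name the correct structure: assume $(v_n)_n\notin (A\ootimes N)^\omega\rtimes\Gamma$ and adapt the computation of \cite[Theorem 3.8]{Bo12}. But the concrete mechanism you propose in the first paragraph for verifying the hypothesis of Theorem \ref{adapted IPV} --- taking $z:=(v_n)_n$ and extracting the lower bound $\vert\tau((\alpha_{t_0}\otimes\id)(u^*)zu)\vert\geq c$ from the commutation $[u,(v_n)_n]=0$ together with the uniform convergence of $\alpha_t\otimes\id$ on $\{v_n\}$ --- does not work. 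First, $z$ must be an element of $M\ootimes N$, not of the ultrapower. Second, and more seriously, with $z=(v_n)_n$ the quantity $\tau((\alpha_{t_0}\otimes\id)(u^*)zu)=\lim_\omega\tau((\alpha_{t_0}\otimes\id)(u^*)uv_n)$ can vanish identically: if $v_n\to 0$ weakly, then $\langle uv_n,\xi\rangle\to 0$ for every fixed vector $\xi$, so no constant $c>0$ exists. The uniform deformation estimate controls $(\alpha_{t_0}\otimes\id)(v_n)-v_n$, which says nothing about how $\alpha_{t_0}$ moves $u\in\mathcal U(D)$; yet it is precisely the displacement of $u$ that the hypothesis of Theorem \ref{adapted IPV} must bound.

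The correct input, and the actual content of the proof, is the \emph{singular part} of the sequence: one sets $x=(x_n)=(v_n)-E_{(A\ootimes N)^\omega\rtimes\Gamma}((v_n))$, which is nonzero exactly when conclusion (1) fails, and takes $z=E_{M\ootimes N}((x_nx_n^*)_n)$. This $z$ has nonvanishing trace pairing even when $x_n\to 0$ weakly, because $\tau(x_nx_n^*)=\Vert x_n\Vert_2^2$ stays bounded below. The lower bound on $\vert\tau((\alpha_{2t}\otimes\id)(u^*)zu)\vert$ is then obtained by showing $\lim_n\Vert\delta_t(u)x_n\Vert_2^2\leq 4\varepsilon$, where $\delta_t(u)=(\alpha_t\otimes\id)(u)-E_{M\ootimes N}((\alpha_t\otimes\id)(u))$; this uses the asymptotic commutation $\lim_n\Vert[x_n,u]\Vert_2=0$ together with the fact that $\lim_n\Vert(P_F\otimes\id)(x_n)\Vert_2=0$ for all finite $F\subset\Gamma$ forces $\lim_n\langle x_n\xi x_n^*,\eta\rangle=0$ for $\xi,\eta$ orthogonal to $L^2(M)\otimes L^2(N)$ (via \cite[Lemma 3.8]{Va10b}), followed by a transversality estimate. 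This computation is exactly the ``delicate point'' you defer, so the proposal as written identifies the target but does not reach it; note also that your reading of the trichotomy is slightly off --- conclusion (1) is the negation of the hypothesis needed to launch the argument, not something to be recovered from the alternative $D\prec 1\otimes N$ (which already implies $D\prec L\Gamma\ootimes N$).
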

\begin{proof}[Sketch of proof]
Assume that $(v_n)_n \notin (A \ootimes N)^\omega \rtimes \Gamma$. We will show that the $D$ satisfies the assumptions of Theorem \ref{adapted IPV}.

Define $x = (x_n) = (v_n) - E_{(A \ootimes N)^\omega \rtimes \Gamma}((v_n)) \neq 0$. Dividing $x$ if necessary by $\Vert x \Vert_2$, one can assume that $\Vert x \Vert_2 \leq 1$. For $F \subset \Gamma$ finite, denote by $P_F: L^2(M) \rightarrow L^2(M)$ the projection onto the closed linear span of elements of the form $xu_g$, $x \in A$ $g \in F$. One checks that:
\begin{itemize}
\item $\alpha_t \otimes id$ converges to identity uniformly on $\lbrace x_n, n \in \N \rbrace$;
\item $\lim_n \Vert [x_n,u] \Vert_2 \rightarrow 0$,  for any $u \in \mathcal U(D)$;
\item $\lim_n \Vert (P_F \otimes id)(x_n)\Vert_2 \rightarrow 0$, for any finite subset $F \subset \Gamma$.
\end{itemize}
Using \cite[Lemma 3.8]{Va10b}, one can show that this last condition implies that \[\lim_n \langle x_n \xi x_n^*,\eta \rangle = 0, \forall \xi,\eta \in (L^2(\tilde{M}) \ominus L^2(M)) \otimes L^2(N).\]

Fix $\varepsilon > 0$. Then there exists a $t = 1/2^k$ such that $\Vert (\alpha_t \otimes id)(x_n) - x_n\Vert_2 < \varepsilon$, $\forall n$.

Fix $u \in \mathcal U(D)$ and put $\delta_t(u) = (\alpha_t\otimes id)(u) - E_{M \otimes N}((\alpha_t \otimes id)(u))$. Then $\delta_t(u) \in (L^2(\tilde{M}) \ominus L^2(M)) \otimes L^2(N)$, and we get
\begin{align*}
\lim_n \Vert \delta_t(u)x_n \Vert_2^2 & \approx_{2\varepsilon} \lim_n \langle \delta_t(ux_n),\delta_t(u)x_n \rangle\\
& \approx_{2\varepsilon} \lim_n \langle \delta_t(x_nu),\delta_t(u)x_n \rangle\\
& \approx_{4\varepsilon} \lim_n \langle x_n\delta_t(u)x_n^*,\delta_t(u) \rangle = 0.
\end{align*}
Thus $\lim_n \Vert \delta_t(u)x_n \Vert_2^2 \leq 4\varepsilon$. 
But exactly as in the proof of Popa's transversality lemma, one shows that for all $n \in \N$,
\begin{align*}
\Vert (\alpha_{2t} \otimes id)(u)x_n - ux_n \Vert_2 & \leq \Vert (\alpha_t \otimes id)(u) x_n - (\alpha_{-t} \otimes id)(u)x_n\Vert_2 + 2\varepsilon\\
& \leq 2 \Vert \delta_t(u)x_n\Vert_2 + 2\varepsilon.
\end{align*}
Hence, if $\varepsilon < 1$, we get that $\lim_n \Vert (\alpha_{2t} \otimes id)(u)x_n - ux_n \Vert_2 < 6\sqrt{\varepsilon}$.

Put $z = E_{M \ootimes N}((x_nx_n^*)_n)$. We have
\[2\lim_n \Vert x_n \Vert_2^2 - 2\Re(\tau((\alpha_{2t} \otimes id)(u^*)zu)) = \lim_n \Vert (\alpha_{2t} \otimes id)(u)x_n - ux_n \Vert_2^2 < 36\varepsilon.\]
If $\varepsilon$ was chosen to be small enough, this implies the result.
\end{proof}

\begin{cor}
\label{adaptedioana}
For $i = 1,2$, consider mixing Gaussian actions $\Gamma_i \curvearrowright A_i$ and  put $M_i = A_i \rtimes \Gamma_i$, $A = A_1 \ootimes A_2$, $\Gamma = \Gamma_1 \times \Gamma_2$ and $M = M_1 \ootimes M_2 = A \rtimes \Gamma$.

For $i = 1,2$, define $\tilde M_i$ and $(\alpha_t^i)$, as in section \ref{gaussianactions}, and denote by $\tilde M = \tilde{M}_1 \ootimes \tilde{M}_2$ equipped with the deformation $(\alpha_t) = (\alpha^1_t \otimes \alpha^2_t)$.

Assume that $(v_n)$ is a bounded sequence of elements in $M$ such that $\alpha_t$ converges uniformly to the identity on the set $\lbrace v_n, n \in \N \rbrace$. Choose a free ultrafilter $\omega$ on $\N$, and denote $D \subset M \subset M^\omega$ the subalgebra of elements that commute with the element $(v_n)_n \in M^\omega$. Put $P = \mathcal{QN}_M(D)''$.

Then one of the following is true.
\begin{enumerate}
\item $(v_n)_n \in A^\omega \rtimes \Gamma$;
\item $D \prec_M L\Gamma_1 \ootimes M_2$ or $D \prec_M M_1 \ootimes L\Gamma_2$;
\item $P \prec_M A_1 \ootimes M_2$ or $P \prec_M M_1 \ootimes A_2$.
\end{enumerate}
\end{cor}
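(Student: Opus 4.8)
The plan is to apply Theorem \ref{ultraproduct} twice, once for each of the two tensor decompositions
\[
M = M_1 \ootimes M_2 = (A_1 \rtimes \Gamma_1) \ootimes M_2 = M_1 \ootimes (A_2 \rtimes \Gamma_2),
\]
treating in the first case $\Gamma_1 \curvearrowright A_1$ as the mixing Gaussian action and $N = M_2$ as the ambient II$_1$ factor, and symmetrically in the second. Since the coefficient algebra $M_2 = A_2 \rtimes \Gamma_2$ (resp.\ $M_1$) of a free mixing action of an infinite group is a II$_1$ factor, both applications are legitimate, and the resulting subalgebras $D$ and $P = \mathcal{QN}_M(D)''$ are literally those appearing in the statement, since they depend only on $(v_n)_n$ and on the ambient algebra $M$, not on the chosen decomposition. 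The first application yields the trichotomy $(v_n)_n \in (A_1 \ootimes M_2)^\omega \rtimes \Gamma_1$, or $D \prec L\Gamma_1 \ootimes M_2$, or $P \prec A_1 \ootimes M_2$; the second yields the same with the indices $1$ and $2$ interchanged.

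The point that must be checked before these applications can be made is that the hypothesis transfers: Theorem \ref{ultraproduct} applied to the first decomposition requires that $\alpha^1_t \otimes \id$, rather than the full $\alpha_t = \alpha^1_t \otimes \alpha^2_t$, converge uniformly to the identity on $\{v_n\}$. First I would record that the two symmetries $\beta_{(1)} = \beta^1 \otimes \id$ and $\beta_{(2)} = \id \otimes \beta^2$ restrict to the identity on $M$ (each $\beta^i$ fixes $M_i$ pointwise and fixes the $u_g$), and that $\beta_{(2)} \alpha_t \beta_{(2)} = \alpha^1_t \otimes \alpha^2_{-t}$, whence $\alpha^1_{2t} \otimes \id = \alpha_t \circ (\beta_{(2)} \alpha_t \beta_{(2)})$. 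Running the computation behind Popa's transversality lemma with $\beta_{(1)}$ and $\beta_{(2)}$ in place of the single symmetry (using $\beta_{(1)}\beta_{(2)}\alpha_t\beta_{(2)}\beta_{(1)} = \alpha_{-t}$) then gives, for every $x \in M$,
\[
\|(\alpha^1_{2t} \otimes \id)(x) - x\|_2 \leq 2\|\alpha_t(x) - E_M(\alpha_t(x))\|_2 \leq 2\|\alpha_t(x) - x\|_2,
\]
the last inequality because $\id - E_M$ is a contraction and $E_M(x) = x$. Taking the supremum over $\{v_n\}$ and reparametrizing shows that $\alpha^1_t \otimes \id$ (and symmetrically $\id \otimes \alpha^2_t$) converges uniformly to the identity on $\{v_n\}$, exactly as needed.

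It then remains to combine the two trichotomies. If either application lands in its second or third alternative, we are directly in case (2) or (3) of the corollary. Otherwise $(v_n)_n$ lies in both $(A_1 \ootimes M_2)^\omega \rtimes \Gamma_1$ and $(M_1 \ootimes A_2)^\omega \rtimes \Gamma_2$, and the task is to deduce $(v_n)_n \in A^\omega \rtimes \Gamma = (A_1 \ootimes A_2)^\omega \rtimes (\Gamma_1 \times \Gamma_2)$, i.e.\ case (1). Here I would argue with Fourier supports: membership of $(v_n)_n$ in $B^\omega \rtimes \Lambda$ means that for every $\varepsilon > 0$ some finite $F \subset \Lambda$ satisfies $\lim_{n \to \omega}\|v_n - \mathcal E^\Lambda_F(v_n)\|_2 \leq \varepsilon$, where $\mathcal E^\Lambda_F$ is the $\|\cdot\|_2$-projection onto the $\Lambda$-Fourier modes indexed by $F$. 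In the double Fourier decomposition of $M$ over $\Gamma_1 \times \Gamma_2$ with coefficients in $A$, the projection $\mathcal E^{\Gamma_1}_{F_1}$ retains the modes whose first coordinate lies in $F_1$ and $\mathcal E^{\Gamma_2}_{F_2}$ those whose second coordinate lies in $F_2$; these commute, and their product is $\mathcal E^{\Gamma_1 \times \Gamma_2}_{F_1 \times F_2}$. A triangle inequality then gives $\lim_{n \to \omega}\|v_n - \mathcal E^{\Gamma}_{F_1 \times F_2}(v_n)\|_2 \leq 2\varepsilon$, which is precisely case (1).

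I expect the hypothesis transfer to be the main, though short, obstacle: it is the only place where the specific $s$-malleable Gaussian structure of the deformation enters, and it is what legitimizes reducing the product-group statement to two instances of the single-factor Theorem \ref{ultraproduct}. The combination step is then routine bookkeeping with $\omega$-uniform Fourier supports.
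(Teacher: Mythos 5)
Your proposal is correct and follows essentially the same route as the paper: the paper likewise applies Theorem \ref{ultraproduct} once for each tensor-leg decomposition and concludes via the identity $\left((A_1 \ootimes M_2)^\omega \rtimes \Gamma_1\right) \cap \left((M_1 \ootimes A_2)^\omega \rtimes \Gamma_2\right) = A^\omega \rtimes \Gamma$. The only difference is that the paper delegates the hypothesis-transfer step (uniform convergence of $\alpha_t$ implies that of $\alpha_t^1 \otimes \id$ and $\id \otimes \alpha_t^2$) to Claim 2 in the proof of Theorem 3.2 of \cite{Io11}, whereas you carry out the transversality-type computation with the partial symmetries $\beta_{(1)},\beta_{(2)}$ explicitly, and correctly.
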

\begin{proof}
Exactly as in the proof of \cite[Theorem 3.2]{Io11} Claim 2, we get that if $\alpha_t$ converges uniformly on $\lbrace v_n, n \in \N \rbrace$, then so do $\alpha_t^1 \otimes id$ and $\id \otimes \alpha_t^2$. Thus if (2) and (3) are not satisfied, Theorem \ref{ultraproduct} implies that $(v_n) \in \left( (A_1 \ootimes M_2)^\omega \rtimes \Gamma_1 \right) \cap \left( (M_1 \ootimes A_2)^\omega \rtimes \Gamma_2 \right) = A^\omega \rtimes \Gamma$.
\end{proof}

\subsection{$2$-mixing property}

\begin{defn}
A trace-preserving action $\Gamma \curvearrowright^\sigma A$ of a countable group on an abelian von Neumann algebra is said to be {\it $2$-mixing} if for any $a, b, c \in A$, the quantity $\tau(a\sigma_{g}(b)\sigma_{h}(c))$ tends to $\tau(a)\tau(b)\tau(c)$ as $g, h, g^{-1}h$ tend to infinity.
\end{defn}

\begin{prop}
An action $\Gamma \curvearrowright^\sigma A$ is $2$-mixing if and only if for all $a, b, c \in A$, one has
\[ \vert \tau(a\sigma_g(b)\sigma_h(c)) - \tau(a)\tau(\sigma_g(b)\sigma_h(c)) \vert \rightarrow 0,\]
when $g \rightarrow \infty$, $h \rightarrow \infty$.
\end{prop}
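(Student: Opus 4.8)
The plan is to prove the two implications separately, in each case first reducing to ordinary mixing and then isolating the regime in which $g^{-1}h$ fails to go to infinity. Throughout I read ``$g\to\infty$'' as the limit along the filter of cofinite subsets of $\Gamma$: a quantity $f(g,h)$ tends to $L$ as $g,h\to\infty$ means that for every $\varepsilon>0$ there is a finite set $F\subset\Gamma$ with $|f(g,h)-L|<\varepsilon$ whenever $g\notin F$ and $h\notin F$; for $2$-mixing one imposes in addition $g^{-1}h\notin F$. The first thing I would record is that \emph{each} of the two conditions forces $\sigma$ to be mixing, i.e.\ $\tau(a\sigma_g(b))\to\tau(a)\tau(b)$ as $g\to\infty$. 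For the displayed condition this is immediate on taking $c=1$: since $\sigma_h(1)=1$ and $\tau(\sigma_g(b))=\tau(b)$, the condition reads $|\tau(a\sigma_g(b))-\tau(a)\tau(b)|\to0$ as $g,h\to\infty$, and as the left-hand side is independent of $h$ one may fix an admissible $h$ to conclude. For $2$-mixing one again takes $c=1$: given $g$ outside the relevant finite set $F$, one picks $h\notin F\cup gF$ (possible since $\Gamma$ is infinite), so that $h$ and $g^{-1}h$ are both large, and then $\tau(a\sigma_g(b))=\tau(a\sigma_g(b)\sigma_h(1))$ is within $\varepsilon$ of $\tau(a)\tau(b)$.

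For the \emph{displayed condition $\Rightarrow$ $2$-mixing} (the easy direction), assume the displayed condition, hence mixing. Let $g,h,g^{-1}h\to\infty$. By the displayed condition the difference $\tau(a\sigma_g(b)\sigma_h(c))-\tau(a)\tau(\sigma_g(b)\sigma_h(c))$ tends to $0$, while by trace-invariance and mixing
\[\tau(\sigma_g(b)\sigma_h(c))=\tau(b\,\sigma_{g^{-1}h}(c))\to\tau(b)\tau(c),\]
so that $\tau(a)\tau(\sigma_g(b)\sigma_h(c))\to\tau(a)\tau(b)\tau(c)$. Combining these gives $\tau(a\sigma_g(b)\sigma_h(c))\to\tau(a)\tau(b)\tau(c)$, which is exactly $2$-mixing.

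For the \emph{$2$-mixing $\Rightarrow$ displayed condition} (the substantive direction), assume $2$-mixing, hence mixing, and split according to $g^{-1}h$. When $g,h,g^{-1}h$ are all large, the same combination of $2$-mixing and mixing as in the previous paragraph shows the displayed difference is small. It remains to treat $g,h$ large with $g^{-1}h$ confined to a finite set $K$. For $k\in K$ write $h=gk$; then, $\sigma$ being an action on an abelian algebra, $\sigma_g(b)\sigma_h(c)=\sigma_g(b\,\sigma_k(c))=\sigma_g(d_k)$ with $d_k:=b\,\sigma_k(c)$, whence the displayed difference equals $|\tau(a\sigma_g(d_k))-\tau(a)\tau(d_k)|$, which tends to $0$ as $g\to\infty$ by mixing, uniformly over the finitely many $k\in K$. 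Concretely, given $\varepsilon>0$ one chooses a finite $F_1$ handling the case $g^{-1}h\notin F_1$ via $2$-mixing and mixing, then a finite $F_2$ so that the mixing estimate for each $d_k$, $k\in F_1$, holds for $g\notin F_2$; taking $F=F_1\cup F_2$ makes the displayed difference $<\varepsilon$ whenever $g,h\notin F$.

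The main obstacle is precisely this last regime: $2$-mixing gives no direct information when $g^{-1}h$ stays bounded, and one must absorb the ``short'' part $k=g^{-1}h$ of the displacement, replacing the pair $(b,c)$ by the single element $d_k=b\,\sigma_k(c)$ and invoking ordinary mixing. The only point requiring care is uniformity across these short displacements, which is harmless because $k=g^{-1}h$ then ranges over a finite set.
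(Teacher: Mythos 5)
Your proof is correct and follows essentially the same route as the paper: the substantive direction rests on the same dichotomy according to whether $g^{-1}h$ is large (where $2$-mixing applies) or confined to a finite set (where one absorbs $\sigma_k(c)$ into $b$ and invokes ordinary mixing), the paper merely phrasing this as a proof by contradiction along sequences rather than via finite exceptional sets. Your explicit verification that $2$-mixing implies mixing is a point the paper uses but leaves unproved, so that is a welcome addition rather than a divergence.
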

\begin{proof}
The {\it if} part is straightforward. For the converse, assume that $\sigma$ is $2$-mixing. It is sufficient to show that if $a,b,c \in A$, with $\tau(a) = 0$, then $\tau(a\sigma_g(b)\sigma_h(c)) \rightarrow 0$, as $g,h \rightarrow \infty$.

Assume by contradiction that there exist sequences $g_n, h_n \in \Gamma$ going to infinity, and $\delta > 0$ such that $\vert \tau(a\sigma_{g_n}(b)\sigma_{h_n}(c)) \vert \geq \delta$, for all $n$.
Then two cases are possible:

{\bf Case 1.} The sequence $g_n^{-1}h_n$ is contained in a finite set. Then taking a subsequence if necessary, one can assume that $g_n^{-1}h_n = k$ is constant. Then for all $n$, we get \[\tau(a\sigma_{g_n}(b)\sigma_{h_n}(c)) = \tau(a\sigma_{g_n}(b\sigma_k(c)).\]
But since $\sigma$ is mixing this quantity tends to $0$ as $n$ tends to infinity.

{\bf Case 2.} The sequence $g_n^{-1}h_n$ is not contained in a finite set. Then taking a subsequence if necessary, one can assume that $g_n^{-1}h_n \rightarrow \infty$ when $n \rightarrow \infty$. Then the $2$-mixing implies that $\tau(a\sigma_{g_n}(b)\sigma_{h_n}(c)) \rightarrow 0$.

In both cases, we get a contradiction.
\end{proof}

Of course any $2$-mixing action is mixing. The converse holds for Gaussian actions.

\begin{prop}
If $\Gamma \curvearrowright^\sigma A$ is the Gaussian action associated with a mixing representation $\pi$ on $H$, then $\sigma$ is $2$-mixing.
\end{prop}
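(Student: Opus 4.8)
The plan is to reduce everything to the generating unitaries $w(\xi)$ and then carry out an explicit Gaussian computation. Observe first that both quantities in the desired asymptotic identity, namely $\tau(a\sigma_g(b)\sigma_h(c))$ and $\tau(a)\tau(b)\tau(c)$, are separately linear in each of the variables $a$, $b$, $c$. Since the vectors $w(\xi)$, $\xi \in H$, span a weakly dense $*$-subalgebra $A_0$ of $A$, trilinearity immediately reduces the statement for $a,b,c \in A_0$ to the case $a = w(\xi)$, $b = w(\eta)$, $c = w(\zeta)$ (the limit of a finite sum is the sum of the limits). To pass from $A_0$ to all of $A$ I would use a standard $\Vert \cdot \Vert_2$-density argument: given contractions $a,b,c \in A$, Kaplansky's density theorem produces contractions $a',b',c' \in A_0$ with $\Vert a-a'\Vert_2$, $\Vert b-b'\Vert_2$, $\Vert c-c'\Vert_2$ arbitrarily small, and a telescoping estimate using $\vert \tau(xy)\vert \leq \Vert x\Vert_2\Vert y\Vert_2$ together with $\Vert \sigma_g(b')\Vert_\infty = \Vert b'\Vert_\infty \leq 1$ bounds $\vert \tau(a\sigma_g(b)\sigma_h(c)) - \tau(a'\sigma_g(b')\sigma_h(c'))\vert$ uniformly in $g,h$; the same works for the product of traces. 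This uniform control of the approximants is the only genuinely technical point, and I expect it to be the main (though routine) obstacle, since one must keep the $\Vert\cdot\Vert_\infty$-norms of the approximants bounded independently of $g,h$.

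It then remains to treat $a = w(\xi)$, $b = w(\eta)$, $c = w(\zeta)$. Here the Weyl-type relation $w(\zeta_1)w(\zeta_2) = w(\zeta_1+\zeta_2)$ together with $\sigma_g(w(\eta)) = w(\pi(g)\eta)$ gives
\[
a\,\sigma_g(b)\,\sigma_h(c) = w(\xi)\,w(\pi(g)\eta)\,w(\pi(h)\zeta) = w\big(\xi + \pi(g)\eta + \pi(h)\zeta\big),
\]
and hence, by the trace formula $\tau(w(\cdot)) = \exp(-\Vert \cdot\Vert^2)$,
\[
\tau(a\,\sigma_g(b)\,\sigma_h(c)) = \exp\!\big(-\Vert \xi + \pi(g)\eta + \pi(h)\zeta\Vert^2\big).
\]
Expanding the squared norm in the real Hilbert space $H$ yields
\[
\Vert \xi + \pi(g)\eta + \pi(h)\zeta\Vert^2 = \Vert\xi\Vert^2 + \Vert\eta\Vert^2 + \Vert\zeta\Vert^2 + 2\langle \xi,\pi(g)\eta\rangle + 2\langle \xi,\pi(h)\zeta\rangle + 2\langle \pi(g)\eta,\pi(h)\zeta\rangle,
\]
where I have used that $\pi$ is orthogonal, so $\Vert\pi(g)\eta\Vert = \Vert\eta\Vert$ and $\Vert\pi(h)\zeta\Vert = \Vert\zeta\Vert$.

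Finally I would show that the three cross terms vanish in the relevant limit. Since $\pi$ is mixing, its matrix coefficients $k \mapsto \langle \pi(k)\alpha,\beta\rangle$ tend to $0$ as $k \to \infty$, for all $\alpha,\beta \in H$. Thus $\langle \xi,\pi(g)\eta\rangle \to 0$ as $g \to \infty$ and $\langle \xi,\pi(h)\zeta\rangle \to 0$ as $h \to \infty$. For the last term the key identity is
\[
\langle \pi(g)\eta,\pi(h)\zeta\rangle = \langle \eta,\pi(g^{-1}h)\zeta\rangle,
\]
again by orthogonality of $\pi$; this coefficient tends to $0$ precisely as $g^{-1}h \to \infty$, which explains why that third condition appears in the definition of $2$-mixing. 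Consequently, as $g,h,g^{-1}h \to \infty$,
\[
\Vert \xi + \pi(g)\eta + \pi(h)\zeta\Vert^2 \longrightarrow \Vert\xi\Vert^2 + \Vert\eta\Vert^2 + \Vert\zeta\Vert^2,
\]
so that $\tau(a\sigma_g(b)\sigma_h(c)) \to \exp(-\Vert\xi\Vert^2)\exp(-\Vert\eta\Vert^2)\exp(-\Vert\zeta\Vert^2) = \tau(a)\tau(b)\tau(c)$. Combined with the reductions of the first paragraph, this yields the $2$-mixing property for all $a,b,c \in A$.
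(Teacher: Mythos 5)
Your proof is correct and follows essentially the same route as the paper: reduce by linearity and density to the generators $w(\xi)$, compute the trace via $\tau(w(\xi)w(\pi(g)\eta)w(\pi(h)\zeta)) = \exp(-\Vert \xi + \pi(g)\eta + \pi(h)\zeta\Vert^2)$, and use the decay of matrix coefficients of the mixing representation $\pi$ to kill the cross terms. The only (harmless) difference is that the paper verifies the equivalent criterion of the preceding proposition, namely that $\tau(a\sigma_g(b)\sigma_h(c)) - \tau(a)\tau(\sigma_g(b)\sigma_h(c)) \to 0$ as $g,h \to \infty$, so the term $\langle \pi(g)\eta,\pi(h)\zeta\rangle$ cancels and the condition $g^{-1}h \to \infty$ is never invoked, whereas you prove the definition directly and correctly observe that this third condition is exactly what controls $\langle \eta, \pi(g^{-1}h)\zeta\rangle$.
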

\begin{proof}
By a linearity/density argument, it is enough to prove that for all $\xi,\eta,\delta \in H$, and all sequences $g_n,h_n \in \Gamma$ tending to infinity, one has
\[ \lim_n \left[ \tau(\omega(\xi) \sigma_{g_n}(\omega(\eta))\sigma_{h_n}(\omega(\delta)))- \tau(\omega(\xi))\tau(\sigma_{g_n}(\omega(\eta))\sigma_{h_n}(\omega(\delta))) \right] = 0.\]
But one checks that:
\begin{itemize}
\item $\tau(\omega(\xi) \sigma_{g_n}(\omega(\delta))\sigma_{h_n}(\omega(\delta)))  = \exp(-\Vert \xi + \pi(g_n)\eta + \pi(h_n)\delta \Vert^2)$;
\item $\tau(\omega(\xi))\tau(\sigma_{g_n}(\omega(\eta))\sigma_{h_n}(\omega(\delta))) = \exp(-\Vert \xi \Vert^2 - \Vert \pi(g_n)\eta + \pi(h_n)\delta \Vert^2)$.
\end{itemize}
The difference is easily seen to tend to $0$ as $n \rightarrow \infty$.
\end{proof}

\section{The key step}

We now state the key theorem from which Theorems A and B follow as explained in the introduction.

\begin{thm}
\label{keythm}
For $i = 1,2$, consider mixing Gaussian actions $\Gamma_i \curvearrowright A_i$ of discrete countable groups $\Gamma_i$, and put $M_i = A_i \rtimes \Gamma_i$, $A = A_1 \ootimes A_2$, $\Gamma = \Gamma_1 \times \Gamma_2$ and \[M = M_1 \ootimes M_2 = A \rtimes \Gamma.\]
Let $t > 0$. Realize $(L\Gamma)^t \subset M^t$ by fixing an integer $n \geq t$ and $p \in L\Gamma \otimes M_n(\C)$ with trace $t/n$. Let $D \subset M^t$ be an abelian subalgebra, and denote by $\Lambda = \mathcal{N}_{M^t}(D) \cap \mathcal U((L\Gamma)^t)$ and make the following assumptions:
\begin{enumerate}[(i)]
\item $\Lambda'' \nprec_M L\Gamma_1 \otimes 1$ and $\Lambda'' \nprec_M 1 \otimes L\Gamma_2$;
\item $D \nprec L\Gamma_1 \otimes M_2$ and $D \nprec_M M_1 \otimes L\Gamma_2$.
\end{enumerate}
Denote by $C = D' \cap M^t$. Then for all projections $q \in \mathcal Z(C)$, $Cq \prec_M A$.
\end{thm}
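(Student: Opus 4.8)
My plan is to argue by contradiction. Suppose the conclusion fails, so that there is a nonzero $q \in \mathcal Z(C)$ with $Cq \nprec_M A$. Since $\Lambda \subseteq \mathcal U((L\Gamma)^t)$ normalizes both $C$ and $A$ (the group-algebra unitaries normalize $A$ in $M = A \rtimes \Gamma$, and $\Lambda$ normalizes $D$ hence $C = D'\cap M^t$), conjugation by $\Lambda$ preserves the property ``$Cq \prec_M A$''. Hence the largest central projection $z$ of $C$ with $Cz \prec_M A$ is $\Lambda$-invariant, and replacing $q$ by $1-z$ I may assume $q$ is $\Lambda$-invariant, so that $\Lambda$ normalizes the reduced algebra $Cq$. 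By Lemma \ref{ioanascriterion}, applied to $M = A \rtimes \Gamma$, I obtain unitaries $c_n \in \mathcal U(Cq)$ with $\sup_{g\in\Gamma}\Vert E_A(c_n u_g^*)\Vert_2 \to 0$. Crucially $c_n \in C = D'\cap M^t$ commutes with $D$, so writing $\mathcal D = \{x \in M : \lim_\omega \Vert [x,c_n]\Vert_2 = 0\}$ for the relative commutant of $(c_n)_n$ in $M^\omega$, we have $D \subseteq \mathcal D$.

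The engine is Corollary \ref{adaptedioana}, which I would apply to the sequence $(c_n)$, with $P = \mathcal{QN}_M(\mathcal D)''$. Granting for the moment that $\alpha_t$ converges uniformly to the identity on $\{c_n\}$, its trichotomy is ruled out as follows. Conclusion (1), $(c_n)_n \in A^\omega \rtimes \Gamma$, cannot hold: the uniform decay of the Fourier coefficients gives $E_{A^\omega \rtimes \Gamma}((c_n)_n) = 0$, contradicting $\Vert (c_n)_n\Vert_2 = \tau(q)^{1/2} > 0$. Conclusion (2), $\mathcal D \prec L\Gamma_1 \ootimes M_2$ or $\mathcal D \prec M_1 \ootimes L\Gamma_2$, passes to the subalgebra $D \subseteq \mathcal D$ and directly contradicts (ii). Finally conclusion (3), say $P \prec A_1 \ootimes M_2$, yields $D \prec A_1 \ootimes M_2$ since $D \subseteq \mathcal D \subseteq P$; because $\Lambda$ normalizes $D$ and $\Gamma_1 \curvearrowright A_1$ is mixing, a control-of-normalizer argument then forces $\Lambda'' \prec \mathcal{QN}_M(A_1 \ootimes M_2)'' = A_1 \ootimes M_2$, and intersecting this localization of $\Lambda'' \subseteq (L\Gamma)^t$ with $L\Gamma$ via mixing of the first factor gives $\Lambda'' \prec 1 \ootimes L\Gamma_2$, contradicting (i). As all three alternatives are excluded, $Cq \prec_M A$, the desired contradiction.

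The genuine obstacle is the deferred point: verifying that $\alpha_t$ converges uniformly on $\{c_n\}$, which is exactly the hypothesis of Corollary \ref{adaptedioana} and the place where the cylinder structure and the strong-compactness property of Bernoulli shifts are unavailable. My plan here is Popa's averaging trick. Assumption (i), that $\Lambda''$ embeds into neither $L\Gamma_1 \otimes 1$ nor $1 \otimes L\Gamma_2$, should make the conjugation action of $\Lambda$ on the complementary bimodule $L^2(\tilde M) \ominus L^2(M)$ have no nonzero invariant vectors; passing from $c_n$ to suitable convex combinations of its $\Lambda$-conjugates (which remain in $Cq$ precisely because $q$ was arranged $\Lambda$-invariant) should then kill the out-of-$M$ component of $\alpha_t(c_n)$ and produce the needed convergence. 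The delicate part is to run this averaging without destroying the escaping condition $\sup_g \Vert E_A(\cdot\, u_g^*)\Vert_2 \to 0$; this is where $\varepsilon$-orthogonality combined with the $2$-mixing property of the Gaussian action enters, ensuring the convex combinations do not fall back into $A$. Equivalently, should the deformation fail to converge, I would instead feed $(c_n)$ into the transversality computation from the proof of Theorem \ref{ultraproduct} --- using $2$-mixing and \cite[Lemma 3.8]{Va10b} in place of the cylinder estimates --- to localize $\mathcal D$, hence $D$, into some $L\Gamma_i \ootimes M_j$, again contradicting (ii).

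Two further points need care and form the secondary obstacle. First, the whole argument is carried out at amplification $t$; passing through the matrix model $p \in L\Gamma \otimes M_n(\C)$ requires amplified versions of Lemma \ref{ioanascriterion}, Corollary \ref{adaptedioana} and the mixing/normalizer lemmas, which is routine but should be stated. Second, the control-of-normalizer-plus-intersection step used to dispose of conclusion (3) --- converting $D \prec A_1 \ootimes M_2$ into the forbidden $\Lambda'' \prec 1 \ootimes L\Gamma_2$ --- relies on the $2$-mixing of the first Gaussian factor to prevent $L\Gamma_1$ from being absorbed into $A_1$; making this precise (in the spirit of \cite{Io11,IPV11}) is the technical heart of matching the corollary's output to hypotheses (i) and (ii).
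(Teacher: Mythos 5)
Your plan has a fatal gap at the step you yourself flag as ``the genuine obstacle'': feeding the sequence $(c_n)\in\mathcal U(Cq)$ into Corollary~\ref{adaptedioana} requires $\alpha_t\otimes\id$ to converge uniformly on $\{c_n\}$, and nothing in the hypotheses of Theorem~\ref{keythm} can supply this. There is no rigidity or spectral gap assumption on $C$ anywhere in the statement, and assumption (i) is an \emph{intertwining} condition on $\Lambda''$, not a statement about the $\Lambda$-bimodule $L^2(\tilde M)\ominus L^2(M)$; Popa's averaging trick has no traction here, and the fallback via the transversality computation of Theorem~\ref{ultraproduct} equally presupposes the uniform convergence you are trying to establish. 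The paper's proof is built precisely to avoid this: the deformation is only ever applied to sequences of the form $(v_nav_n^*)$ with $v_n\in\Lambda\subset\mathcal U((L\Gamma)^t)$ and $a\in D$ \emph{fixed}; since $\alpha_t$ fixes $L\Gamma$ pointwise, $\Vert\alpha_t(v_nav_n^*)-v_nav_n^*\Vert_2=\Vert\alpha_t(a)-a\Vert_2$ and uniformity is automatic. Note also that the corollary is then used \emph{positively} --- its first alternative holds, i.e.\ $(v_nav_n^*)_n\in A^\omega\rtimes\Gamma$ (Lemma~\ref{lem1}) --- rather than as a trichotomy to be refuted. A secondary error: your disposal of alternative (3) rests on $\mathcal{QN}_M(A_1\ootimes M_2)''=A_1\ootimes M_2$, which is false ($A_1\ootimes M_2$ is normalized by all $u_{g}\otimes 1$, $g\in\Gamma_1$, so its quasi-normalizer generates $M$); the paper instead rules out (3) by showing directly that $\Vert E_{A_1\ootimes M_2}(xv_ny)\Vert_2\to0$ from \eqref{eq1} and a commuting-square argument, using $v_n\in\mathcal U(\mathcal N_{pMp}(D)'')$.

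Beyond these, the proposal contains no counterpart to what is actually the engine of the proof. After Lemma~\ref{lem1}, the paper proves that the heights $h(v_n)$ stay bounded below (Lemma~\ref{lem2}), localizes the Fourier coefficients of a well-chosen $v_nav_n^*$ into a translate of a fixed finite-dimensional subspace of $(A_1\ominus\C1)\otimes(A_2\ominus\C1)$ (Lemma~\ref{lem3}), and then runs an iterated ``Implication'' with the convex sets $[K_1\times\sigma_G(K_2)]^\lambda$: assuming $C\nprec_M A$, Lemma~\ref{ioanascriterion} produces $d\in\mathcal U(C)$ with spread-out coefficients, and the $2$-mixing property (Lemma~\ref{lem6}) forces $dP_S\circ Q_{\mathcal C_n}(a_n)d^*$ to be nearly orthogonal to $P_S\circ Q_{\mathcal C_n}(a_n)$ while both approximate $a_n$, improving the approximation constant at each step until one reaches an absurd negative norm. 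Your element $c_n$ (the paper's $d$) does enter there, but only as an auxiliary averaging device inside $M$, never as input to the deformation. Without this second half, even a repaired version of your first half would establish only Lemma~\ref{lem1}, not the theorem.
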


\begin{proof}
Exactly as in the proof of \cite[Theorem 5.1]{IPV11}, we first show that it is sufficient to prove that $C \prec_M A$.
Indeed, assume that we have shown that the assumptions of the theorem imply that $C \prec_M A$.

Consider the set of projections \[\mathcal P  = \lbrace q_1 \in \mathcal Z(C)\, \vert \, Cq \prec_M A, \text{ for all non-zero subprojections } q \in \mathcal{Z}(C)q_1 \rbrace.\]
Then $\mathcal P$ admits a unique maximal element $p_1 \in \mathcal Z(C)$. By uniqueness, $p_1$ commutes with the normalizer of $C$, and in particular with $\Lambda''$. Using \cite[Lemma 3.8]{Va10b} and assumption (i), we get that $p_1 \in (L\Gamma)^t$. We want to show that $p_1 = p (= 1_C)$. Otherwise, we can cut by $p-p_1$ and we see that $(p-p_1)D \subset (p-p_1)(M \otimes M_n(\C))(p-p_1)$ satisfies the assumptions of the theorem. Thus $(p-p_1)C \prec_M A$. This contradicts the maximality of $p_1$.

So the rest of the proof is devoted to showing that $C \prec_M A$. As in the proof of \cite[Theorem 5.1]{IPV11}, we assume that $t \leq 1$, so that $n$ can be chosen to be equal to $1$. This assumption largely simplifies notations, and does not hide any essential part of the proof.

Note that the assumption (i) implies that there exists a sequence of unitary elements $v_n \in \mathcal U(pL\Gamma p)$ that normalize $D$ and such that
\begin{equation}
\label{eq1}
\Vert E_{L\Gamma_1 \otimes 1}(av_nb)\Vert_2 \rightarrow 0 \text{ and } \Vert E_{1 \otimes L\Gamma_2}(av_nb)\Vert_2 \rightarrow 0, \, \forall a,b \in M.
\end{equation}
We will proceed in two steps to prove that $C \prec_M A$. In a first step we collect properties regarding the sequence $(v_n)$ or sequences of the form $(v_nav_n^*)$, $a \in D$. In the second step we show the result, reasoning by contradiction. Before moving on to these two steps, we introduce some notations:
\begin{itemize}
\item We denote by $u_g, g \in \Gamma$ the canonical unitaries in $M$ implementing the action of $\Gamma$;
\item For any element $x \in M$, we denote by $x = \sum_{g \in \Gamma} x_gu_g$ ($x_g \in A$ for all $g \in \Gamma$) its Fourier decomposition.
\item If $S \subset \Gamma$ is any subset, denote by $P_S: L^2(M) \rightarrow L^2(M)$ the projection onto the linear span of the vectors $au_g$, $a \in A$, $g \in S$.
\item If $K \subset A$ is a closed subspace, we denote by $Q_K: L^2(M) \rightarrow L^2(M)$ the projection onto the linear span of the vectors $au_g$, $a \in K$, $g \in \Gamma$.
\end{itemize}
Warning for the sequel: ``$g,h \in \Gamma$" means that $g$ and $h$ are two elements $(g_1,g_2)$ and $(h_1,h_2)$ of the product group $\Gamma = \Gamma_1 \times \Gamma_2$. It is different from ``$(g,h) \in \Gamma$"!

\subsection*{Step 1: Properties of the sequences $(v_nav_n^*)$, $a \in D$}

\begin{lem}
\label{lem1}
For any free ultrafilter $\omega$ on $\N$, and any $a \in D$, the element $(v_nav_n^*)_n \in M^\omega$ belongs to $A^\omega \rtimes \Gamma$.
\end{lem}
\begin{proof}
We will apply Corollary \ref{adaptedioana}. Fix $a \in D$. Since the $v_n$'s are in $L\Gamma$, the deformation $\alpha_t$ introduced in the statement of Corollary \ref{adaptedioana} converges uniformly on the set $\lbrace v_nav_n^*, n \in \N \rbrace$. Thus Corollary \ref{adaptedioana} implies that one of the following holds true:
\begin{itemize}
\item $(v_nav_n^*)_n \in A^\omega \rtimes \Gamma$;
\item $D \prec_M L\Gamma_1 \ootimes M_2$ or $D \prec_M M_1 \ootimes L\Gamma_2$;
\item $P \prec_M A_1 \ootimes M_2$ or $P \prec_M M_1 \ootimes A_2$, where $P = \mathcal N_{pMp}(D)''$.
\end{itemize}
The second case is excluded by assumption, so we are left to showing that the third case is not possible. By symmetry, it is sufficient to show that $P \nprec_M A_1 \ootimes M_2$. But we claim that for all $x, y \in M$, $\Vert E_{A_1 \ootimes M_2}(xv_ny)\Vert_2 \rightarrow 0$.
Since $v_n \in \mathcal{U}(P)$, this claim implies the result.

By Kaplansky's density theorem, and by linearity it is sufficient to prove the claim for $x$ and $y$ of the form $u_g \otimes 1$, $g \in \Gamma_1$. In particular $xv_ny$ lies in $L\Gamma$. So using the fact that \[\begin{matrix} A_1 \ootimes M_2 & \subset & M\\ \cup && \cup\\ 1 \otimes L\Gamma_2 & \subset & L\Gamma \end{matrix}\] is a commuting square, \ref{eq1} directly implies that $\Vert E_{A_1 \ootimes M_2}(xv_ny)\Vert_2 \rightarrow 0$.
\end{proof}

For an element $x \in M = L\Gamma$, denote by $h(x)$ the height of $x$: $h(x) = \sup_{g \in \Gamma} \vert x_g \vert$, where $x = \sum x_gu_g$ is the Fourier decomposition of $x$.

\begin{lem}
\label{lem2}
There exists $\delta > 0$ such that $h(v_n) > \delta$ for all $n$.
\end{lem}
\begin{proof}
Assume that the result is false. Taking a subsequence if necessary, we get that $h(v_n) \rightarrow 0$. Then we claim that for all finite subset $S \subset \Gamma$, and all $a \in (M \ominus (L\Gamma_1 \ootimes M_2)) \cap (M \ominus (M_1 \ootimes L\Gamma_2))$, \[\lim_n \Vert P_S(v_nav_n^*)\Vert_2 = 0.\]
Note that $(M \ominus (L\Gamma_1 \ootimes M_2)) \cap (M \ominus (M_1 \ootimes L\Gamma_2))$ is the subset of elements in $M$ whose Fourier coefficients lie in the weak closure of $(A_1 \ominus \C1) \otimes (A_2 \ominus \C1)$.

By a linearity/density argument, to prove this claim it is sufficient to show that for any sequence of unitaries $w_n \in \mathcal U(p L\Gamma p)$ and $a \otimes b \in (A_1 \ominus \C 1) \otimes (A_2 \ominus \C 1)$,
\[\Vert E_A(v_n (a\otimes b) w_n) \Vert_2 \rightarrow 0.\]
Write $v_n = \sum_{g \in \Gamma} v_{n,g}u_g$ and $w_n = \sum_{g \in \Gamma} w_{n,g}u_g$. We have
\[E_A(v_n (a \otimes b) w_n) = \sum_{g \in \Gamma} v_{n,g}w_{n,g^{-1}}\sigma_g(a \otimes b),\]
which leads to the formula:
\begin{equation}\label{eq2}
\Vert E_A(v_n (a \otimes b) w_n) \Vert_2^2 = \sum_{g,g' \in \Gamma} v_{n,g}w_{n,g^{-1}}\overline{v_{n,g'}w_{n,g'^{-1}}} \tau(\sigma_g(a \otimes b)\sigma_{g'}(a^* \otimes b^*)).
\end{equation}
Fix $\varepsilon > 0$. Since the action $\Gamma_i \curvearrowright A_i$ is mixing for $i = 1,2$, there exist finite sets $F_i \subset \Gamma_i$ such that $\vert \tau((a \otimes b)\sigma_{(s,t)}(a^* \otimes b^*))\vert = \vert \tau(a\sigma_s(a^*))\tau(b\sigma_t(b^*))\vert < \varepsilon$, if $(s,t) \notin F = F_1 \times F_2$. Now \ref{eq2} and Cauchy-Schwarz inequality imply
\begin{align*}
\Vert E_A(v_n (a \otimes b) w_n) \Vert_2^2
& \leq \sum_{g \in \Gamma} \sum_{g' \in gF} \vert v_{n,g}w_{n,g^{-1}}\overline{v_{n,g'}w_{n,g'^{-1}}} \tau(\sigma_g(a \otimes b)\sigma_{g'}(a^* \otimes b^*))\vert + \varepsilon.\\
&\leq \Vert a \Vert_2^2 \Vert b \Vert_2^2 h(v_n) \vert F \vert \sum_{g \in \Gamma} \vert v_{n,g}w_{n,g^{-1}}\vert + \varepsilon\\
& \leq \Vert a \Vert_2^2 \Vert b \Vert_2^2 h(v_n) \vert F \vert + \varepsilon.
\end{align*}
Hence, $\limsup_n \Vert E_{A \ootimes A}(v_n (a \otimes b) w_n) \Vert_2^2 \leq \varepsilon$. Since $\varepsilon$ was arbitrary, we get the claim.

Now take $\varepsilon' < \Vert p \Vert_2/4$. By assumption, $D \nprec_M L\Gamma_1 \ootimes M_2$ and $D \nprec_M M_1 \ootimes L\Gamma_2$, so there exists $a \in \mathcal{U}(D)$ such that \[\Vert E_{L\Gamma_1 \ootimes M_2}(a)\Vert_2 < \varepsilon' \text{ and } \Vert E_{M_1 \ootimes L\Gamma_2}(a)\Vert_2 < \varepsilon'.\]
By Lemma \ref{lem1}, the sequence $(v_nav_n^*)_n$ belongs to $A^\omega \rtimes \Gamma$, so that there exists a finite subset $F \subset \Gamma$ such that $\Vert P_F(v_nav_n^*) \Vert_2 \geq \Vert p \Vert_2 - \varepsilon'$. Thus if we Define $a_0 = a - E_{L\Gamma_1 \ootimes M_2}(a) - E_{M_1 \ootimes L\Gamma_2}(a - E_{L\Gamma_1 \ootimes M_2}(a))$, we get
\begin{align*}
\Vert p \Vert_2 - \varepsilon' & \leq \Vert P_F(v_nav_n^*) \Vert_2 \\
&\leq \Vert P_F(v_n(a - E_{L\Gamma_1 \ootimes M_2}(a))v_n^*) \Vert_2 + \varepsilon'\\
& \leq \Vert P_F(v_n a_0 v_n^*) \Vert_2 + 3\varepsilon'.
\end{align*}
But $a_0$ is orthogonal to $L\Gamma_1 \ootimes M_2$ and $M_1 \ootimes L\Gamma_2$, because the conditional expectations $E_{L\Gamma_1 \ootimes M_2}$ and $E_{M_1 \ootimes L\Gamma_2}$ commute. Therefore, when $n$ goes to infinity, the claim implies that $\Vert P_F(v_na_0v_n^*) \Vert_2 \rightarrow 0$ which leads to the absurd statement that $\Vert p \Vert_2 \leq 4\varepsilon' < \Vert p \Vert_2$.
\end{proof}

We end this paragraph by a lemma that localizes the Fourier coefficients of elements $v_nav_n^*$ inside $A$, for a particular (fixed) $a \in D$. In fact, this lemma will be the starting point of our reasoning by contradiction in Step 2 below, being the initialization of an induction process.

\begin{lem}
\label{lem3}
For a well chosen $a \in \mathcal U(D)$, there exists a $\delta_0 > 0$, a finite dimensional subspace $K \subset (A_1 \ominus \C1) \otimes (A_2 \ominus \C1)$, and a sequence $(g_n,h_n) \in \Gamma$ such that:
\begin{itemize}
\item $g_n,h_n \rightarrow \infty$, as $n \rightarrow \infty$;\item $\liminf \Vert Q_{\sigma_{(g_n,h_n)}(K)}(v_nav_n^*) \Vert_2 > \delta_0.$
\end{itemize}
\end{lem}
\begin{proof}
Put $\delta_1 = \liminf h(v_n) > 0$ and consider a sequence $(g_n,h_n) \in \Gamma$ such that $\vert v_{n,(g_n,h_n)} \vert = h(v_n)$ for all $n$. Now \ref{eq1} implies that the sequences $(g_n)$ and $(h_n)$ go to infinity with $n$. Moreover, we have
\[\limsup_n \Vert v_n - v_{n,(g_n,h_n)}u_{(g_n,h_n)} \Vert_2 = \sqrt{\Vert p \Vert_2^2 - \delta_1^2}.\]
Take $\varepsilon > 0$ such that $\sqrt{\Vert p \Vert_2^2 - \delta_1^2} + 4\varepsilon < \Vert p \Vert_2$. By assumption (ii), there exists $a \in \mathcal{U}(D)$ such that $\Vert E_{L\Gamma_1 \ootimes M_2}(a) \Vert_2 < \varepsilon$ and $\Vert E_{M_1 \ootimes L\Gamma_2}(a) \Vert_2 < \varepsilon$.
Thus the element $a_1 = a - E_{L\Gamma_1 \ootimes M_2}(a) - E_{M_1 \ootimes L\Gamma_2}(a - E_{L\Gamma_1 \ootimes M_2}(a))$ satisfies $\Vert a - a_1 \Vert_2 < 3\varepsilon$, and its Fourier coefficients are in $(A_1 \ominus \C1) \ootimes (A_2 \ominus \C1)$. We conclude that there exists a finite dimensional $K \subset (A_1 \ominus \C1) \otimes (A_2 \ominus \C1)$ such that, $\Vert a - Q_K(a) \Vert_2 < 4\varepsilon$.

Finally, we get that \[\Vert v_nav_n^* - v_{n,(g_n,h_n)}u_{(g_n,h_n)}Q_K(a)v_n^* \Vert_2 < \sqrt{\Vert p\Vert_2^2 - \delta_1^2} + 4\varepsilon.\]
Since $v_{n,(g_n,h_n)}u_{(g_n,h_n)}Q_K(a)v_n^*$ belongs to the image of the projection  $Q_{\sigma_{(g_n,h_n)}(K)}$, we get the result with $\delta_0 > 0$ defined by $\Vert p \Vert_2^2 - \delta_0^2 = (\sqrt{\Vert p \Vert_2^2 - \delta_1^2} + 4\varepsilon)^2$.
\end{proof}

\subsection*{Step 2: We show that $C \prec_M A$}

\begin{notation}
For a finite subset $G \subset \Gamma$, finite dimensional subspaces $K_1,K_2 \subset A$ and $\lambda > 0$, define 
\[ [K_1 \times \sigma_G(K_2)]^\lambda = \conv \lbrace \lambda a \sigma_g(b) \, \vert \, a \in K_1, b \in K_2, g \in G, \Vert a \Vert_2 \leq 1, \Vert b \Vert_2 \leq 1 \rbrace.\]
\end{notation}

We have that $[K_1 \times \sigma_G(K_2)]^\lambda$ is a closed convex subset $\mathcal C$ of $A$ (being the convex hull of a compact subset in a finite dimensional vector space). Then the set \[\tilde{\mathcal C} = \lbrace \sum_{g \in \Gamma} \xi_g \otimes \delta_g \in L^2(M) \vert \, \forall g \in \Gamma, \xi_g \in \mathcal C \rbrace\] is a closed convex subset of $L^2(M)$. Hence one can define the ``orthogonal projection onto this set" $Q_{\mathcal C}: L^2(M) \rightarrow L^2(M)$ as follows. For $x \in L^2(M)$, $Q_{\mathcal C}(x)$ is the unique point of $\tilde{\mathcal{C}}$ such that \[\Vert x - Q_{\mathcal C}(x)\Vert = \inf_{y \in \tilde{\mathcal C}} \Vert x-y \Vert. \]

\begin{rem}
This notation is consistent with the previous notation $Q_K$: If $K \subset A$ is a finite dimensional subspace, then $Q_K(a) = Q_{\mathcal C}(a)$, where $\mathcal C = [\C1 \times \sigma_{\lbrace e \rbrace}(K)]^\lambda$ as soon as $\lambda \geq \Vert a \Vert_2$.
\end{rem}

Before getting into the heart of the proof, we check some easy properties of these convex sets.

\begin{lem}
\label{lem4}
Fix $\lambda > 0$ and finite dimensional subspaces $K_1,K_2 \subset A$. Then there exists a constant $\kappa > 0$ such that for all finite $G \subset \Gamma$, and all $x \in [K_1 \times \sigma_G(K_2)]^\lambda$, \[ \Vert x \Vert_\infty \leq \kappa.\]
\end{lem}
\begin{proof}
Since $K_1$ and $K_2$ are finite dimensional, there exists a constant $c > 0$ such that $\Vert a \Vert_\infty \leq c\Vert a \Vert_2$ for all $a \in K_1$ or $a \in K_2$. One sees that $\kappa = \lambda c^2$ satisfies the conclusion of the lemma.
\end{proof}

\begin{lem}
\label{lem5}
For finite subsets $F, G \subset \Gamma$, and finite dimensional subspaces $K_1, K_2, K_1', K_2' \subset A$ and $\lambda,\lambda' > 0$, we have
\[ [K_1 \times \sigma_F(K_2)]^{\lambda} + [K_1' \times \sigma_G(K_2')]^{\lambda'} \subset [(K_1+K_1') \times \sigma_{G \cup F}(K_2+K_2')]^{\lambda + \lambda'}.\]
\end{lem}
\begin{proof} This is straightforward.
\end{proof}

{\it From now on, we assume by contradiction that $C \nprec_M A$.}
The contradiction we are looking for is then a direct consequence of the following implication.  Indeed, using Lemma \ref{lem3}, and iterating the implication enough times, we get the absurd statement that there exist unitaries $a_n = v_nav_n^*$ and elements $b_n$ of the form $Q_{\mathcal C_n}(a_n)$ such that $\liminf_n \Vert a_n - b_n \Vert_2^2$ is negative.

\begin{implication}
Fix $a \in \mathcal U(D)$ and put $a_n = v_nav_n^*$ for all $n$. Assume that there exists a sequence of finite subsets $F_n\times G_n \subset \Gamma = \Gamma_1 \times \Gamma_2$, finite dimensional subspaces $K_1 \subset A$, $K_2 \subset (A_1 \ominus \C1) \otimes (A_2 \ominus \C1)$, $\lambda > 0$ and $\delta > 0$ such that:
\begin{itemize}
\item $\sup_n \vert F_n \vert \vert G_n \vert < \infty$;
\item $F_n \rightarrow \infty$, $G_n \rightarrow \infty$;
\item $\limsup_n \Vert a_n - Q_{\mathcal C_n}(a_n)\Vert_2^2 < \Vert p \Vert_2^2 - \delta^2$, where $\mathcal C_n = [K_1 \times \sigma_{F_n\times G_n}(K_2)]^\lambda$.
\end{itemize}
Then there exists a sequence of finite subsets $F_n'\times G_n' \subset \Gamma$, finite dimensional subspaces $K_1' \subset A$, $K_2' \subset (A_1 \ominus \C1) \otimes (A_2 \ominus \C1)$, and $\lambda' > 0$ such that:
\begin{itemize}
\item $\sup_n \vert F_n' \vert \vert G_n' \vert < \infty$;
\item $F_n' \rightarrow \infty$, $G_n' \rightarrow \infty$;
\item $\limsup_n \Vert a_n - Q_{\mathcal C_n'}(a_n)\Vert_2^2 < \Vert p \Vert_2^2 - 3\delta^2/2$, where $\mathcal C_n' = [K_1' \times \sigma_{F_n' \times G_n'}(K_2')]^{\lambda'}$.
\end{itemize}
\end{implication}

The multiple mixing property will be used in the proof of this implication through the following lemma.

\begin{lem}
\label{lem6}
Let $x,y,z \in A_1 \otimes A_2$. For any sequences $g_n = (g_n^1,g_n^2)\in \Gamma$ and $h_n = (h_n^1,h_n^2) \in \Gamma$ such that $g_n^1,g_n^2,h_n^1,h_n^2 \rightarrow \infty$, we have \[\vert \tau(x\sigma_{g_n}(y)\sigma_{h_n}(z))- \tau(x)\tau(\sigma_{g_n}(y)\sigma_{h_n}(z)) \vert \rightarrow 0.\]
\end{lem}
\begin{proof}
Without loss of generality, one can assume that $x = x_1 \otimes x_2$, $y = y_1 \otimes y_2$, $z = z_1 \otimes z_2$. We have
\begin{itemize}
\item $\tau(x\sigma_{g_n}(y)\sigma_{h_n}(z)) = \tau(x_1\sigma_{g_n^1}(y_1)\sigma_{h_n^1}(z_1))\tau(x_2\sigma_{g_n^2}(y_2)\sigma_{h_n^2}(z_2))$;
\item $\tau(x)\tau(\sigma_{g_n}(y)\sigma_{h_n}(z)) = \tau(x_1)\tau(\sigma_{g_n^1}(y_1)\sigma_{h_n^1}(z_1))\tau(x_2)\tau(\sigma_{g_n^2}(y_2)\sigma_{h_n^2}(z_2))$.
\end{itemize}
So the result follows directly from the multiple mixing property of the Gaussian actions $\Gamma_i \curvearrowright A_i$, $i=1,2$.
\end{proof}

\begin{proof}[Proof of the implication]
Let $a$, $F_n$, $G_n$, $K_1$, $K_2$, $\lambda$, $\delta$ and $\mathcal C_n$ be as in the implication. Fix $\varepsilon > 0$, with $\varepsilon \ll \delta$. By Lemma \ref{lem1} one can find $S \subset \Gamma$ finite such that $\Vert a_n - P_S(a_n)\Vert_2 \leq \varepsilon$, for all $n$.
Hence we get that $\limsup_n \Vert a_n - P_S \circ Q_{\mathcal C_n}(a_n)\Vert_2 < \sqrt{\Vert p \Vert_2^2-\delta^2}+\varepsilon$.

Now following Ioana's idea, we will consider an element $d \in \mathcal U(C)$ with sufficiently spread out Fourier coefficients so that for $n$ large enough, $dP_S \circ Q_{\mathcal C_n}(a_n)d^*$ is almost orthogonal to $P_S \circ Q_{\mathcal C_n}(a_n)$, while it is still close to $a_n$. Then the sum $dP_S \circ Q_{\mathcal C_n}(a_n)d^* + P_S \circ Q_{\mathcal C_n}(a_n)$ should be even closer to $a_n$.

Let $\alpha > 0$ be a (finite) constant such that $\Vert x \Vert_\infty \leq \alpha \Vert x \Vert_2$, for all $x \in K_1$. Since $K_2 \subset (A_1 \ominus \C1) \otimes (A_2 \ominus \C1)$ is finite dimensional, the set \[L = \lbrace g \in \Gamma \, , \, \exists a,b \in K_2, \Vert a\Vert_2 \leq 1, \Vert b \Vert_2 \leq 1: \vert\langle \sigma_g(a),b\rangle \vert \geq \varepsilon/\vert S\vert^2\lambda^2\alpha^2 \rbrace\] is finite.
Hence for all $n$, $L_n = \cup_{g,h \in F_n\times G_n} gLh^{-1}$ is finite, with cardinal smaller or equal to $\vert F_n \vert^2 \vert G_n \vert^2 \vert L \vert$, which is itself majorized by some $N$, not depending on $n$.

Since $C \nprec A$, Ioana's intertwining criterion (Lemma \ref{ioanascriterion}) implies that there exists $d \in \mathcal{U}(C)$ such that $\Vert P_F(d) \Vert_2 \leq \varepsilon/\kappa\vert S \vert$, whenever $\vert F \vert \leq N$, where $\kappa$ is given by Lemma \ref{lem4} applied to $K_1$, $K_2$ and $\lambda$.

By Kaplansky's density theorem, one can find $d_0, d_1 \in M$, and $T = T_1 \times T_2 \subset \Gamma$ finite such that:
\begin{itemize}
\item $d_i = P_T(d_i)$, $i = 0,1$;
\item $\Vert d_0 - d \Vert_2 \leq \min(\varepsilon,\varepsilon/\kappa\vert S \vert)$, $\Vert d_1 - d^* \Vert_2 \leq \varepsilon$;
\item $\Vert d_i \Vert_\infty \leq 1$, $i = 0,1$.
\end{itemize}
Since $a_n \in D$ for all $n$ and $d \in C = D' \cap M$, we have $da_nd^* = a_n$. Thus for all $n$, $\Vert a_n - d_0a_nd_1 \Vert_2 \leq 2\varepsilon$, and so \[ \limsup_n \Vert a_n - d_0P_S \circ Q_{\mathcal C_n}(a_n)d_1 \Vert_2 \leq \sqrt{\Vert p \Vert_2^2-\delta^2}+3\varepsilon.\]

Now, for all $n$, put $T_n = T \setminus L_n$. By definition of $d$, $\Vert P_T(d) - P_{T_n}(d) \Vert_2 \leq \varepsilon/\kappa\vert S \vert$, hence $\Vert d_0 - P_{T_n}(d_0) \Vert_2 \leq 3\varepsilon/\kappa\vert S \vert$. Notice that $\Vert P_S \circ Q_{\mathcal C_n}(a_n)\Vert_\infty \leq \kappa\vert S \vert$, which implies that \[\limsup_n \Vert a_n - P_{T_n}(d_0)P_S \circ Q_{\mathcal C_n}(a_n)d_1 \Vert_2 \leq \sqrt{\Vert p \Vert_2^2-\delta^2}+6\varepsilon.\]
Denote by $x_n = P_S \circ Q_{\mathcal C_n}(a_n)$ and $y_n =  P_{T_n}(d_0)P_S \circ Q_{\mathcal C_n}(a_n)d_1$.

We want to show that $\limsup_n \vert \langle x_n,y_n \rangle \vert$ is small.

Write $d_0 = \sum_{g \in T}d_{0,g}u_g$, $a_n = \sum_h a_{n,h}u_h$, and $d_1 = \sum_{k \in T} d_{1,k}u_k$. We get
\begin{align*}
\langle y_n , x_n \rangle & = \sum_{\substack{g \in T_n, h \in S, k \in T\\ ghk \in S}} \tau(d_{0,g}\sigma_{gh}(d_{1,k})\sigma_g(Q_{\mathcal C_n}(a_{n,h}))Q_{\mathcal C_n}(a_{n,ghk})^*)\\
& = \sum_{\substack{g \in T, h \in S, k \in T\\ ghk \in S}} \mathbf{1}_{\lbrace g \in T_n \rbrace}\tau(d_{0,g}\sigma_{gh}(d_{1,k})\sigma_g(Q_{\mathcal C_n}(a_{n,h}))Q_{\mathcal C_n}(a_{n,ghk})^*).
\end{align*}

{\it Claim.} For all fixed $x,y \in A$, and $g \in T$, there exists $n_0$ such that for all $n \geq n_0$, and all $a,b \in \mathcal C_n$, \[\vert \mathbf{1}_{\lbrace g \in T_n \rbrace}\tau(xy\sigma_g(a)b^*) \rangle \vert \leq 2\varepsilon \Vert x \Vert_2 \Vert y \Vert_2/\vert S \vert^2.\]
To prove this claim, first recall that for all $n$, $\mathcal C_n = [K_1 \times \sigma_{F_n \times G_n}(K_2)]^\lambda$. Denote by $\tilde K_1 = \spa \lbrace xy\sigma_g(a)b^*, a,b \in K_1 \rbrace$. Since $\tilde K_1$ and $K_2$ have finite dimension and since $F_n,G_n \rightarrow \infty$, Lemma \ref{lem6} implies that there exists $n_0$ such that for $n\geq n_0$, and for all $s,t \in F_n\times G_n$ one has
\begin{equation}\label{eq3} \sup_{\substack{a \in \tilde{K}_1, \Vert a \Vert_2 \leq 1\\ b,c \in K_2, \Vert b \Vert_2 \leq 1, \Vert c \Vert_2 \leq 1}} \vert \tau(a\sigma_{gs}(b)\sigma_t(c^*)) - \tau(a)\tau(\sigma_{gs}(b)\sigma_t(c^*))\vert \leq \varepsilon \Vert x \Vert_2 \Vert y \Vert_2/\vert S \vert^2\lambda^2.
\end{equation}
Thus take $n \geq n_0$. By definition of $\mathcal C_n$, it is sufficient to prove that for all $a,b \in K_1$, $c,d \in K_2$, with $\Vert a \Vert_2, \Vert b \Vert_2, \Vert c \Vert_2, \Vert d \Vert_2 \leq 1$, and all $s,t \in F_n\times G_n$, \[\vert \mathbf{1}_{\lbrace g \in T_n \rbrace}\tau(xy\sigma_g(\lambda a\sigma_s(c))\lambda b^*\sigma_t(d^*)) \vert \leq 2\varepsilon\Vert x \Vert_2 \Vert y \Vert_2/\vert S \vert^2.\]
We can assume that $g \in T_n$. An easy calculation gives
\begin{align*}
\vert \tau(xy\sigma_g(\lambda a\sigma_s(c))\lambda b^*\sigma_t(d^*)) \vert & \leq \varepsilon\Vert x \Vert_2 \Vert y \Vert_2/\vert S \vert^2 + \lambda^2\vert \tau(xy\sigma_g(a)b^*)\tau(\sigma_{gs}(c)\sigma_t(d^*))\vert\\
& \leq \varepsilon\Vert x \Vert_2 \Vert y \Vert_2/\vert S \vert^2 + \lambda^2\Vert x \Vert_2\Vert y \Vert_2 \Vert a \Vert_\infty \Vert b \Vert_\infty \varepsilon/\vert S \vert^2\lambda^2\alpha^2\\
& \leq 2\varepsilon\Vert x \Vert_2 \Vert y \Vert_2/\vert S \vert^2,
\end{align*}
where the first inequality is deduced from \ref{eq3}, while the second is because $g \notin L_n$. So the claim is proven.

Now we can estimate $\vert \langle x_n,y_n \rangle \vert$, for $n$ large enough.
\begin{align*}
\vert \langle x_n,y_n \rangle \vert & \leq 
\sum_{g \in T, h \in S, k' \in S} \vert \mathbf{1}_{\lbrace g \in T_n \rbrace}\tau(d_{0,g}\sigma_{gh}(d_{1,h^{-1}g^{-1}k'})\sigma_g(Q_{C_n}(a_{n,h}))Q_{C_n}(a_{n,k'})^*)\vert\\
& \leq \sum_{g \in T, h \in S, k' \in S} 2\varepsilon \Vert d_{0,g} \Vert_2 \Vert d_{1,h^{-1}g^{-1}k'} \Vert_2/\vert S \vert^2\\
& \leq 2\varepsilon \Vert d_0 \Vert_2 \Vert d_1 \Vert_2 \leq 2 \varepsilon.
\end{align*}
Therefore, we obtain: 
\begin{itemize}
\item $\limsup_n \Vert a_n - x_n \Vert_2 < \sqrt{\Vert p \Vert_2^2 - \delta^2} + \varepsilon$;
\item $\limsup_n \Vert a_n - y_n \Vert_2 < \sqrt{\Vert p \Vert_2^2 - \delta^2} + 6\varepsilon$;
\item $\limsup_n \vert \langle x_n , y_n \rangle \vert \leq 2\varepsilon$.
\end{itemize}
Thus using the formula \[\Vert x - (y+z) \Vert_2^2 = \Vert x-y \Vert_2^2 + \Vert x - z \Vert_2^2 - \Vert x \Vert_2^2 + 2\Re \langle y,z \rangle,\]
one checks that $\limsup_n \Vert a_n - (x_n + y_n) \Vert_2^2 \leq \Vert p \Vert_2^2 - 3\delta^2/2$, if $\varepsilon$ is small enough.

Now observe that \[y_n = \sum_{g \in T_n,h \in S, k \in T} d_{0,g}\sigma_{gh}(d_{1,k})\sigma_g(Q_{\mathcal C_n}(a_{n,h}))u_{ghk}.\]
So let us check that $y_n$ has its Fourier coefficients in $[K_0 \times \sigma_{(T_1F_n)\times (T_2G_n)}(K_2)]^{\lambda\vert S \vert \vert T \vert}$, where $K_0 = \spa \lbrace d_{0,g}\sigma_{gh}(d_{1,k})\sigma_g(c), c \in K_1, g,k \in T, h \in S \rbrace$.

Fix $n \in \N$, and $s \in \Gamma$. Denote by $y_{n,s} = E_A(y_nu_s^*)$. We have \[y_{n,s} = \sum_{\substack{g \in T_n,h \in S, k \in T\\ ghk=s}} d_{0,g}\sigma_{gh}(d_{1,k})\sigma_g(Q_{\mathcal C_n}(a_{n,h})).\]
Thus it is a convex combination of terms of the form
\begin{align*}
\mathcal T & = \sum_{\substack{g \in T,h \in S, k \in T\\ ghk=s}} d_{0,g}\sigma_{gh}(d_{1,k})\sigma_g(\lambda a_h\sigma_{t_h}(b_h))\\ 
& = \frac{1}{\vert S \vert \vert T \vert}\sum_{\substack{g \in T,h \in S, k \in T\\ ghk=s}} \vert S \vert \vert T \vert d_{0,g}\sigma_{gh}(d_{1,k})\sigma_g(\lambda a_h\sigma_{t_h}(b_h)) ,
\end{align*}
for elements $a_h \in K_1$, $b_h \in K_2$, with $\Vert a_h \Vert_2,\Vert b_h \Vert_2 \leq 1$ and $t_h \in F_n\times G_n$, for all $h \in S$. But such terms $\mathcal T$ are themselves convex combinations of elements of the form $\lambda \vert S \vert \vert T \vert x\sigma_{gt}(y)$, with $x \in K_0, y \in K_2$, $\Vert x \Vert_2, \Vert y \Vert_2 \leq 1$ and $gt \in T(F_n\times G_n) = (T_1F_n) \times (T_2G_n)$.

Therefore, as pointed out in Lemma \ref{lem5}, $x_n + y_n$ has Fourier coefficients in $\mathcal C_n' = [K_1' \times \sigma_{F_n' \times G_n'}(K_2')]^{\lambda'}$, with $K_1' = K_1 + K_0$, $K_2' = K_2$, $\lambda' = \lambda + \lambda\vert S \vert \vert T \vert$, and $F_n' = F_n \cup T_1F_n$, $G_n' = G_n \cup T_2G_n$.

We conclude that:
\[\Vert a_n - Q_{C_n'}(a_n)\Vert_2^2 \leq \Vert p \Vert_2^2 - 3\delta^2/2,\]
which proves the implication.
\end{proof}
The proof of Theorem \ref{keythm} is complete.
\end{proof}

Taking $\Gamma_2 = \lbrace e \rbrace$ and $A_2 = \C$ we obtain a similar statement for a single mixing action $\Gamma \curvearrowright A$.

\begin{cor}
\label{corkey}
Assume that $\Gamma \curvearrowright A$ is a mixing Gaussian action. Denote by $M = A \rtimes \Gamma$. Consider an abelian subalgebra $D \subset pMp$, $p \in L\Gamma$, which is normalized by a sequence of unitaries $(v_n) \in \mathcal U(pL\Gamma p)$ with $v_n \rightarrow 0$ weakly. Put $C = D' \cap pMp$. Then one of the following is true:
\begin{itemize}
\item $D \prec_M L\Gamma$ 
\item For all $q \in \mathcal Z(C)$, $qC \prec_M A$.
\end{itemize}
\end{cor}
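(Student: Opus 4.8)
The plan is to re-run the proof of Theorem~\ref{keythm} in the degenerate situation $\Gamma_2=\lbrace e\rbrace$, $A_2=\C$, where $\Gamma=\Gamma_1$, $A=A_1$, $M=M_1$ and $t=\tau(p)\le 1$ (so $n=1$). In this degeneration the pair of intermediate algebras $L\Gamma_1\ootimes M_2$ and $M_1\ootimes L\Gamma_2$ collapses to $L\Gamma$ and to $M$ respectively, while $1\otimes L\Gamma_2$ collapses to $\C1$. The half of hypothesis~(i) reading $\Lambda''\nprec_M L\Gamma$ and the half of~(ii) reading $D\nprec_M M$ thereby become meaningless: the former fails outright, since $\Lambda''\subset pL\Gamma p$ forces $\Lambda''\prec_M L\Gamma$, and the latter can never hold, since $D\subset M$ forces $D\prec_M M$; these are exactly the conditions the corollary omits. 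What survives is the other half of each. Assumption~(i) degenerates to $\Lambda''\nprec_M\C1$, and I claim this is precisely the content of $v_n\to0$ weakly: since $\Vert E_{\C1}(av_nb)\Vert_2=|\tau(av_nb)|\to0$ for all $a,b$ (the degenerate form of \ref{eq1}) and $v_n\in\Lambda\subset\mathcal U(\Lambda'')$, the third condition of Theorem~\ref{intertwining} yields $\Lambda''\nprec_M\C1$. The surviving half of~(ii), namely $D\nprec_M L\Gamma$, is exactly the negation of the first alternative in the statement. So, assuming $D\nprec_M L\Gamma$, I aim to prove $qC\prec_M A$ for every $q\in\mathcal Z(C)$.

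By the maximality reduction at the start of the proof of Theorem~\ref{keythm}, this reduces to showing $C\prec_M A$. For that reduction I would take the largest $p_1\in\mathcal Z(C)$ below which $C$ intertwines into $A$; being normalizer-invariant, $p_1$ commutes with $\Lambda''$, and by \cite[Lemma 3.8]{Va10b} together with the mixing of $\sigma$ and $v_n\to0$ weakly this forces $p_1\in L\Gamma$. Cutting by $r=p-p_1\in L\Gamma\cap D'$ then keeps $rD$ abelian, normalized by the weakly-null sequence $v_nr\in\mathcal U(rL\Gamma r)$ (the $v_n$ fix $p_1$, hence commute with $r$), and with $rD\nprec_M L\Gamma$ since a corner of $rD$ is a corner of $D$; so $rC\prec_M A$ would contradict maximality. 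It remains to establish $C\prec_M A$ by contradiction, running Steps 1 and 2 with a single growing family of group elements in place of $F_n\times G_n$.

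In Step 1 the three lemmas carry over with only notational changes. For the analogue of Lemma~\ref{lem1}, that $(v_nav_n^*)_n\in A^\omega\rtimes\Gamma$, I would invoke Theorem~\ref{ultraproduct} with $N=\C$ (valid since $L\Gamma$ is already a factor) in place of Corollary~\ref{adaptedioana}, whose degenerate case $D\prec_M M$ is vacuous; its two alternative conclusions $D\prec_M L\Gamma$ and $P\prec_M A$ are ruled out, respectively, by the standing hypothesis and, exactly as in the proof of Lemma~\ref{lem1}, by the estimate $\Vert E_A(xv_ny)\Vert_2\to0$ for $x,y\in L\Gamma$ (immediate from $v_n\to0$ weakly, as then $xv_ny\in L\Gamma$ and $E_A$ restricts to $\tau$ there). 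The height bound $h(v_n)>\delta$ of Lemma~\ref{lem2} goes through with the single-factor mixing estimate of $\Gamma\curvearrowright A$: one picks $a\in\mathcal U(D)$ with $\Vert E_{L\Gamma}(a)\Vert_2$ small, using $D\nprec_M L\Gamma$, and localizes the coefficients of $a-E_{L\Gamma}(a)$ in $A\ominus\C1$. Finally the initialization Lemma~\ref{lem3} produces a single sequence $g_n\to\infty$ — the location of the dominant coefficient of $v_n$, which escapes every finite set because each fixed Fourier coefficient of $v_n$ tends to $0$ — together with a finite-dimensional $K\subset A\ominus\C1$ such that $\liminf_n\Vert Q_{\sigma_{g_n}(K)}(v_nav_n^*)\Vert_2>\delta_0$.

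The engine is the Implication, run with convex sets $\mathcal C_n=[K_1\times\sigma_{F_n}(K_2)]^\lambda$, $K_2\subset A\ominus\C1$ and $F_n\to\infty$. The product structure entered the original argument only through Lemma~\ref{lem6}, and there only to guarantee that the two translation indices $gs$ and $t$ appearing in $\tau(x\,\sigma_{gs}(\cdot)\,\sigma_t(\cdot))$ both tend to infinity. Here both $s,t$ range over the single family $F_n$, so $gs,t\to\infty$, and the decoupling estimate~\ref{eq3}, $|\tau(x\sigma_{gs}(b)\sigma_t(c^*))-\tau(x)\tau(\sigma_{gs}(b)\sigma_t(c^*))|\to0$, is exactly the $2$-mixing of the single Gaussian action $\Gamma\curvearrowright A$ established in Section~2.4. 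Iterating the Implication from the data of Lemma~\ref{lem3} then drives $\liminf_n\Vert a_n-Q_{\mathcal C_n}(a_n)\Vert_2^2$ below zero, the required contradiction, so $C\prec_M A$. The main obstacle is therefore bookkeeping rather than a new idea: one must verify that every appearance of the two intermediate algebras and of the product group $\Gamma_1\times\Gamma_2$ either collapses cleanly to $L\Gamma$ and a single family $F_n$, or is supplied by $v_n\to0$ weakly. The one genuinely delicate substitution is that of the now-vacuous Corollary~\ref{adaptedioana} by Theorem~\ref{ultraproduct} with $N=\C$, whose surviving alternatives must still be excluded by $v_n\to0$ weakly and $D\nprec_M L\Gamma$.
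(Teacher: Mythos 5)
Your proposal is correct and takes essentially the same route as the paper, whose entire proof of Corollary \ref{corkey} is the one-line specialization $\Gamma_2 = \lbrace e \rbrace$, $A_2 = \C$ of Theorem \ref{keythm}. Your careful tracking of which halves of hypotheses (i) and (ii) degenerate, the identification of $v_n \to 0$ weakly with the surviving half of (i), and the substitution of Theorem \ref{ultraproduct} (with trivial $N$) for the now-vacuous degenerate case of Corollary \ref{adaptedioana} is exactly the bookkeeping that the paper's one-liner leaves implicit.
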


In fact, S. Vaes asked during his series of lectures at the IHP in Paris (spring 2011) whether such a corollary could hold for any mixing action. A. Ioana showed that this is true for Bernoulli shifts \cite[Theorem 6.2]{Io11}, and as we just showed, the proof can be adapted to Gaussian actions. In our proof, we only used the following properties of Gaussian actions:
\begin{itemize}
\item The 2-mixing property;
\item The malleability property.
\end{itemize}
Moreover, the malleability of Gaussian actions is only used to prove Lemma \ref{lem1} (\emph{i.e.} to show that the sequences $(v_nav_n^*)$, $a \in D$ lie in $A^\omega \rtimes \Gamma$). We suspect that this lemma might be shown only using multiple mixing properties, but we were not able to reach this conclusion.

We end this section by mentioning a generalization of Theorems A and B that considers {\it some} amplifications. The proof is the same, and still works because Popa's orbit equivalence superrigidity theorems (\cite[Theorem 5.2 and Theorem 5.6]{IPV11} and \cite[Theorem 1.3]{Po08}) handle such amplifications.

\begin{thm}
\label{mainthm}
Let $\Gamma$ be an ICC countable discrete group, and $\pi : \Gamma \rightarrow \mathcal O(H_\R)$ an orthogonal representation of $\Gamma$.
Make one of the following two assumptions:
\begin{itemize}
\item $\Gamma$ is w-rigid and ICC, and $\pi$ is mixing; 
\item $\Gamma$ is an ICC non-amenable product of two infinite groups and $\pi$ is mixing and admits a tensor power which is weakly contained in the regular representation.
\end{itemize}
Let $\Gamma \curvearrowright A$ be the Gaussian action associated with $\pi$ and put $M = A \rtimes \Gamma$.
Let $\Lambda \curvearrowright B$ be another free ergodic action on an abelian von Neumann algebra, and put $N = B \rtimes \Lambda$.

If for some $t \geq 1$,  $M \simeq N^t$, then $t = 1$, $\Gamma \simeq \Lambda$ and the actions $\Gamma \curvearrowright A$ and $\lambda \curvearrowright B$ are conjugate.
\end{thm}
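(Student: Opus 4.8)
The plan is to run the four-step Ioana / Ioana-Popa-Vaes strategy recalled in the introduction, feeding in the Gaussian inputs assembled in Section~2, and to dispose of the amplification constant $t$ only at the very end through Popa's orbit equivalence superrigidity. After fixing an isomorphism $M \simeq N^t = (B \rtimes \Lambda)^t$, the algebras $A$ and (the amplified copy of) $B$ are two Cartan subalgebras of the II$_1$ factor $M$; by the Feldman-Moore correspondence it suffices to show they are unitarily conjugate, which by \cite[Theorem A.1]{Po06c} is equivalent to $B \prec_M A$. Granting this, the actions $\Gamma \curvearrowright A$ and $\Lambda \curvearrowright B$ become stably orbit equivalent with compression constant $t$, and since Popa's orbit equivalence superrigidity theorems (\cite[Theorem 5.2 and Theorem 5.6]{IPV11} and \cite[Theorem 1.3]{Po08}) apply to the Gaussian action $\Gamma \curvearrowright A$ and accommodate amplifications, they force $t = 1$, an isomorphism $\Gamma \simeq \Lambda$, and conjugacy of the two actions. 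The whole content is therefore the intertwining $B \prec_M A$, which I prove by contradiction, assuming $B \nprec_M A$.

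The decomposition $M = B \rtimes \Lambda$ provides the dual coaction $\Delta : M \to M \ootimes M$, $bv_s \mapsto bv_s \otimes v_s$ (amplified as in \cite[Section~3]{IPV11}), with $\Delta(B) = B \otimes 1$, which I analyze against the Gaussian structure $M \ootimes M = (A \ootimes A) \rtimes (\Gamma \times \Gamma)$. For Step (1) I would apply Corollary~\ref{corIPV}, with the auxiliary II$_1$ factor there taken to be $M$ itself (so the ambient algebra is $M \ootimes M$), to $Q = \Delta(L\Gamma)$ — more precisely to the image $\Delta(L\Gamma_0)$ of an infinite relative-property-(T) normal subgroup $\Gamma_0 \triangleleft \Gamma$ in the w-rigid case, and in the product case using that the relevant relative commutant is strongly non-amenable relative to $1 \otimes M$ together with the tensor-power hypothesis on $\pi$. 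Mixing of $\Gamma \curvearrowright A$ excludes the options $Q \prec 1 \otimes M$ and $\mathcal{QN}_{M \ootimes M}(Q)'' \prec A \ootimes M$; a symmetric application to the second leg, combined as in \cite{IPV11}, then produces a unitary $u$ with $u \Delta(L\Gamma) u^* \subset L\Gamma \ootimes L\Gamma$. For Step (2) I would apply the key Theorem~\ref{keythm} with $\Gamma_1 = \Gamma_2 = \Gamma$, $A_1 = A_2 = A$ to $D = u\Delta(A)u^*$: hypothesis (i) holds because the diagonal-type algebra $\Lambda'' \supseteq u\Delta(L\Gamma)u^*$ cannot be intertwined into a single leg $L\Gamma \otimes 1$ or $1 \otimes L\Gamma$, while hypothesis (ii), that $\Delta(A)$ embeds into neither $L\Gamma \ootimes M$ nor $M \ootimes L\Gamma$, is exactly where mixing and the contradiction hypothesis $B \nprec_M A$ are used. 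The theorem then yields $C := \Delta(A)' \cap (M \ootimes M) \prec_{M \ootimes M} A \ootimes A$.

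Steps (3) and (4) are general and follow \cite{Io11,IPV11} essentially verbatim. From Steps (1)-(2) and the enhanced conjugacy criterion \cite[Theorem~5.2]{Po06b} one extracts a unitary $v$, a group homomorphism $\delta : \Gamma \to \Gamma \times \Gamma$ and a character $\omega : \Gamma \to \C$ with $v \Delta(C) v^* = A \ootimes A$ and $v \Delta(u_g) v^* = \omega(g) u_{\delta(g)}$ for all $g$. This structure shows that any bounded sequence $(x_n)$ in $M$ whose Fourier coefficients (for $M = A \rtimes \Gamma$) tend to $0$ pointwise in $\Vert \cdot \Vert_2$ is carried by $\Delta$ to a sequence whose coefficients (for $M \ootimes M = (M \ootimes A) \rtimes \Gamma$) also tend to $0$ pointwise. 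Applying this to a sequence $v_n \in \mathcal U(B)$ provided by Ioana's criterion (Lemma~\ref{ioanascriterion}) witnessing $B \nprec_M A$, so that its coefficients vanish uniformly, forces the coefficients of $\Delta(v_n) = v_n \otimes 1$ to vanish as well; but the coefficient of $v_n \otimes 1$ at the identity is $v_n \otimes 1$, of norm $1$ — the desired contradiction.

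Assuming the key Theorem~\ref{keythm}, the main obstacle I anticipate is twofold. First, the verification of its hypotheses in Step (2): identifying the quasi-normalizer entering (i) and, above all, deducing from $B \nprec_M A$ and the mixing of the Gaussian action that $\Delta(A) \nprec L\Gamma \ootimes M$ and $\Delta(A) \nprec M \ootimes L\Gamma$. Second, the consistent bookkeeping of the amplification $t$ throughout, so that the dual coaction, all the intertwining relations $\prec$, and the final orbit-equivalence input are amplified coherently in the sense of \cite[Section~3]{IPV11}; this is where the hypothesis $t \geq 1$ and the conclusion $t = 1$ are ultimately reconciled.
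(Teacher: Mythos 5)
Your proposal follows essentially the same route as the paper: Theorem \ref{mainthm} is proved there by running the four-step Ioana--Ioana-Popa-Vaes argument through the dual coaction $\Delta$, with Corollary \ref{corIPV} supplying Step (1), Theorem \ref{keythm} applied to $D = \Delta(A)$ supplying Step (2), and Popa's orbit equivalence superrigidity theorems absorbing the amplification $t$ at the end. Your reduction to the intertwining $B \prec_M A$ of Cartan subalgebras and the final contradiction via the Fourier coefficients of $\Delta(v_n) = v_n \otimes 1$ match the paper's outline, so the proposal is correct and not a genuinely different approach.
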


\section{An application to group von Neumann algebras}

As another application of Theorem \ref{keythm}, we construct a large class of II$_1$ factors which are not stably isomorphic to group von Neumann algebras. These factors are the crossed-product von Neumann algebras of Gaussian actions associated with representations $\pi$ as in Theorem A or Theorem B, with the extra-assumption that $\pi$ is not weakly contained in the regular representation.

In \cite[Proposition 2.8]{Bo12}, such Gaussian actions were shown not to be conjugate to generalized Bernoulli shifts. Using Theorems A and B, we get that the associated factors are not isomorphic to crossed-product factors of Bernoulli actions, and in particular, to von Neumann algebras of certain wreath-product groups. However, showing that such factors are not isomorphic to algebras $L\Lambda$, with no assumptions on the group $\Lambda$ is much harder, and will require the work of Ioana, Popa and Vaes \cite{IPV11}.

\begin{thm}
Let $\Gamma$ be an ICC group and $\pi: \Gamma \rightarrow \mathcal O(H)$ a mixing orthogonal representation of $\Gamma$ such that one of the following two conditions holds.
\begin{itemize}
\item $\Gamma$ is w-rigid or
\item $\Gamma$ is non-amenable and is isomorphic to the product of two infinite groups, and some tensor power of $\pi$ is weakly contained in the regular representation of $\Gamma$.
\end{itemize}
Assume moreover that $\pi$ itself is not weakly contained in the regular representation. Let $\Gamma \curvearrowright^\sigma A$ be the Gaussian action associated with $\pi$ and put $M = A \rtimes \Gamma$.
Then $M$ is not stably isomorphic to a group von Neumann algebra.
\end{thm}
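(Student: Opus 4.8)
The plan is to argue by contradiction, exploiting the comultiplication trick of Ioana--Popa--Vaes: the rigidity that makes $\sigma_\pi$ W$^*$-superrigid forces any group realizing $M$ to come with a \emph{permutation-type} Koopman representation, and this clashes with the hypothesis that $\pi$ is not weakly contained in the regular representation. The new input allowing the superrigidity steps to run in the Gaussian setting is precisely Theorem \ref{keythm} (together with Corollary \ref{corIPV} and Theorem \ref{mainthm}); the remaining analysis is then imported from \cite{IPV11}, and the final contradiction is a representation-theoretic computation that is special to Gaussian actions.

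Concretely, suppose $M^t \simeq L\Lambda$ for some countable group $\Lambda$ and some $t>0$, and write $A=L^\infty(X)\subset M$ for the Cartan subalgebra of the Gaussian construction. Transporting the canonical comultiplication $\Delta_0:L\Lambda\to L\Lambda\ootimes L\Lambda$, $\Delta_0(v_s)=v_s\otimes v_s$, through the isomorphism $L\Lambda\simeq M^t$ yields a $*$-homomorphism $\Delta:M^t\to M^t\ootimes M^t$. First I would feed $\Delta$ into the deformation/rigidity machinery: Corollary \ref{corIPV} supplies Step~(1), a unitary conjugation of $\Delta((L\Gamma)^t)$ into $L\Gamma\ootimes L\Gamma$, and Theorem \ref{keythm} applied to $D=\Delta(A)$ (whose normalizer contains the image of $\mathcal U((L\Gamma)^t)$, so that hypotheses (i)--(ii) are met via mixingness) supplies Step~(2), namely $\Delta(A)'\cap(M^t\ootimes M^t)\prec A\ootimes A$. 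These are exactly the conclusions on which the abstract comultiplication analysis of \cite{IPV11} rests.

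With Steps~(1) and (2) available, the reconstruction of \cite{IPV11} applies \emph{verbatim}, since it uses only these two conclusions together with the ICC property of $\Gamma$ and the mixingness of the action. This is the heart of the matter and the step I expect to be the main obstacle: one runs the conjugacy criterion to obtain a unitary $W$, a homomorphism $\delta$ and a character $\omega$ with $W\Delta(A)W^*=A\ootimes A$ and $W\Delta(u_g)W^*=\omega(g)u_{\delta(g)}$, and then decodes the grouplikeness $\Delta_0(v_s)=v_s\otimes v_s$ to locate every $v_s$ inside $\mathcal N_M(A)$. Since the action is free, $\mathcal N_M(A)=\{a u_g:a\in\mathcal U(A),\,g\in\Gamma\}$, so writing $v_s=a_s u_{c(s)}$ defines a surjective homomorphism $c:\Lambda\to\Gamma$ whose kernel $\Sigma=\{s:v_s\in\mathcal U(A)\}$ is an abelian (as $A$ is abelian) normal subgroup with $\Lambda/\Sigma\simeq\Gamma$ and $A=L\Sigma=L^\infty(\hat\Sigma)$, forcing moreover $t=1$. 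In other words the Gaussian action $\sigma_\pi$ is conjugate to the dual of the action of $\Gamma=\Lambda/\Sigma$ on $\Sigma$ by conjugation.

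It then remains to contradict this dual-action description, and here the hypothesis on $\pi$ enters. The Koopman representation of a Gaussian action is the symmetric Fock space over $\pi_\C$, hence contains the complexification $\pi_\C$ as a subrepresentation. On the other hand, identifying $L^2(\hat\Sigma)\simeq\ell^2(\Sigma)$ by Plancherel, the Koopman representation of $\Gamma\curvearrowright\hat\Sigma$ is exactly the permutation representation of $\Gamma$ on $\ell^2(\Sigma\setminus\{e\})$. Since $\sigma_\pi$ is mixing, its matrix coefficients $g\mapsto\mathbf 1_{\{g\cdot s=s\}}$ tend to $0$, so every stabilizer $\Gamma_s$ ($s\in\Sigma\setminus\{e\}$) is finite; thus each orbit contributes a summand $\ell^2(\Gamma/\Gamma_s)=\Ind_{\Gamma_s}^\Gamma 1$ weakly contained in the regular representation of $\Gamma$, and hence so is all of $\ell^2(\Sigma\setminus\{e\})$. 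Being a subrepresentation, $\pi_\C$ is then weakly contained in the regular representation, and therefore so is $\pi$. This contradicts the standing hypothesis, and the theorem follows. (The earlier, weaker conclusion that $M$ is not a Bernoulli crossed-product factor follows more cheaply from Theorems A--B and \cite[Proposition 2.8]{Bo12}; the present argument is exactly what is needed to rule out \emph{arbitrary} $L\Lambda$.)
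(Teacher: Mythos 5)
Your proposal is correct and follows essentially the same route as the paper: both arguments reduce, via the comultiplication machinery (Steps (1)--(2) supplied by Corollary \ref{corIPV} and Theorem \ref{keythm}, after which the paper simply cites the adaptation of Theorem 8.2 of \cite{IPV11}), to the conclusion that $t=1$, $\Lambda\simeq\Sigma\rtimes\Gamma$ with $\Sigma$ infinite abelian and $\sigma_\pi$ conjugate to the dual action of $\Gamma$ on $\hat\Sigma$, and then derive the contradiction from mixing $\Rightarrow$ finite stabilizers $\Rightarrow$ weak containment of $\ell^2(\Sigma\setminus\{e\})$, hence of $\pi_\C$ and $\pi$, in the regular representation (the paper quotes Proposition 1.7 of \cite{PS10} where you argue directly via the symmetric Fock space; the two are equivalent). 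One slip worth correcting: for a free ergodic action $\mathcal N_M(A)$ is \emph{not} $\{au_g : a\in\mathcal U(A),\ g\in\Gamma\}$ --- it is the full group of the orbit equivalence relation twisted by $\mathcal U(A)$ --- so the form $v_s=a_su_{c(s)}$ does not follow from mere membership of $v_s$ in the normalizer of $A$; it is delivered by the grouplike-unitary and height analysis of \cite{IPV11}, which is what both you and the paper are ultimately invoking at that point.
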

\begin{proof}
Let $\pi$ be an orthogonal representation as in the statement of the theorem. Assume by contradiction that there exists a countable group $\Lambda$ such that $M \simeq (L\Lambda)^t$ for some $t > 0$. Then adapting the proof of \cite[Theorem 8.2]{IPV11}, we get that $t = 1$, and $\Lambda \simeq \Sigma \rtimes \Gamma$, for some infinite abelian group $\Sigma$ and some action $\Gamma \curvearrowright \Sigma$ by automorphisms. Moreover, the initial Gaussian action $\sigma$ is conjugate to the action of $\Gamma$ on $L\Sigma$.

Now, since $\sigma$ is mixing, the action $\Gamma \curvearrowright \Sigma \setminus \lbrace e \rbrace$ has finite stabilizers. But then the Koopmann representation $\Gamma \rightarrow \mathcal U(\ell^2(\Sigma \setminus \lbrace e \rbrace))$ is weakly contained in the left regular representation. Thus, Proposition 1.7 in \cite{PS10} implies that $\pi$ is weakly contained in the regular representation, which contradicts our assumptions on $\pi$.
\end{proof}

By \cite[Proposition 2.9]{Bo12}, we know that for each $n \geq 3$, $\PSL(n,\Z)$ admits a representation as in Theorem C. Thus we obtain the existence of a II$_1$ factor $M_n$, which is not stably isomorphic to a group von Neumann algebra. But using Theorem \ref{mainthm}, we get that the $M_n$'s are pairwise non-stably isomorphic :
$ M_n \ncong (M_m)^t, \, \forall t > 0, \, \forall n \neq m$.

\bibliographystyle{alpha1}

\end{document}